\newtheorem{thm}{Theorem}[section]
\newtheorem{prob}[thm]{Problem}
\newtheorem{lem}[thm]{Lemma}
\newtheorem{cor}[thm]{Corollary}
\newtheorem{pro}[thm]{Proposition}
\newtheorem{remark}[thm]{Remark}
\newtheorem{conj}[thm]{Conjecture}
\newtheorem{claim}{Claim}[section]
\newtheorem{definition}{Definition}[section]
\begin{document}
\title{The spectral Tur\'{a}n problem: Characterizing\\
 spectral-consistent graphs}

\author{
Longfei Fang\thanks{School of Mathematics, East China University of Science and Technology, Shanghai 200237, China. Supported by the National Natural Science Foundation of China (No.\,12501471). Email: \url{lffang@chzu.edu.cn}.}
\and
Sergey Goryainov\thanks{School of Mathematical Sciences, Hebei International Joint Research Center for Mathematics and Interdisciplinary Science, Hebei Key Laboratory of Computational Mathematics
and Applications, Hebei Workstation for Foreign Academicians, Hebei Normal University, Shijiazhuang 050024, P.R. China. Email: \url{sergey.goryainov3@gmail.com}.}
\and
Denis Krotov\thanks{
Sobolev Institute of Mathematics, Novosibirsk 630090, Russia.
Email: \url{krotov@math.nsc.ru}.}
\and
Huiqiu Lin\thanks{School of Mathematics, East China University of Science and Technology, Shanghai 200237, China. Supported by the National Natural Science Foundation of China (No.\,12271162), and the Natural Science Foundation of Shanghai (No.\,22ZR1416300). Email: \url{huiqiulin@126.com}.}
\and
Mingqing Zhai\thanks{Corresponding author.
School of Mathematics and Statistics, Nanjing University of Science and
Technology, Nanjing, Jiangsu 210094, China.
 Supported by the National Natural Science Foundation of China (No.\,12571369). Email: \url{mqzhai@njust.edu.cn}.}
}

\date{\today}
\maketitle

\begin{abstract}
Let ${\rm EX}(n,H)$ and ${\rm SPEX}(n,H)$
denote the families of $H$-free graphs of order $n$ with maximum size and maximum spectral radius, respectively.
A graph $H$ is called spectral-consistent if ${\rm SPEX}(n,H)\subseteq {\rm EX}(n,H)$ holds for all sufficiently large $n$.
Cioab\u{a}, Desai, and Tait put forward the following conjecture:
Let $H$ be a graph for which every graph in ${\rm EX}(n,H)$ is a Tur\'{a}n graph plus $O(1)$ edges.
Then $H$ is spectral-consistent.
Wang, Kang, and Xue settled this conjecture.
Recently, Liu and Ning raised a general problem in spectral extremal graph theory:
Characterize all graphs that are spectral-consistent.

In this paper, we establish that for any finite graph \(H\), if its decomposition family is matching-good, then \(H\) is necessarily spectral-consistent.
Notably, this structural condition is strictly weaker than the condition for spectral-consistency established by Wang, Kang, and Xue in their earlier work,
thereby broadening the class of graphs known to satisfy the spectral-consistency property.
Our main result enables us to fully characterize the spectral-consistency for several important families of forbidden graphs \(H\),
including generalized color-critical graphs,
odd-ballooning of trees and complete bipartite graphs, as well as edge blow-up of non-bipartite graphs and certain special bipartite graphs.
Furthermore, we present a streamlined proof for an existing spectral-consistency result due to Chen, Lei, and Li,
simplifying their original argument. Finally, we propose several open problems to motivate future research in this area.
\end{abstract}

\begin{flushleft}
\textbf{Keywords:} spectral-consistent; generalized color-critical graph; odd-ballooning; edge blow-up
\end{flushleft}
\textbf{AMS Classification:} 05C35; 05C50

\section{Introduction}

A graph family $\mathcal{H}$ is called \emph{finite}, if $\max_{H\in \mathcal{H}}|V(H)|$ is a constant.
We say a graph \emph{$\mathcal{H}$-free}
if it does not contain any copy of $H\in \mathcal{H}$ as a subgraph.
The Tur\'{a}n number of $\mathcal{H}$, denoted by ${\rm ex}(n,\mathcal{H})$,
is the maximum number of edges of any $n$-vertex $\mathcal{H}$-free graph.
Let ${\rm EX}(n,\mathcal{H})$ denote the family of $\mathcal{H}$-free graphs
with  $n$ vertices and ${\rm ex}(n,\mathcal{H})$ edges.
For convenience, we use ${\rm ex}(n,H)$ (resp. ${\rm EX}(n,H)$) instead of ${\rm ex}(n,\mathcal{H})$ (resp. ${\rm EX}(n,\mathcal{H})$)
when $\mathcal{H}=\{H\}$.
Given a graph family  $\mathcal{H}$ with  $\min_{H\in \mathcal{H}}\chi(H)=p+1\geq 3$,
where $\chi(H)$ denotes the chromatic number of $H$.
The famous Erd\H{o}s-Stone-Simonovits theorem \cite{Erdos-1966,ES1946}
shows that
$${\rm ex}(n,\mathcal{H})=\frac{p-1}{p}\binom{n}{2}+o(n^2).$$
Since then, investigating the Tur\'{a}n number for various graph families $\mathcal{H}$
has emerged as a central focus within extremal graph theory.
For further details on this topic,
we refer readers to \cite{EFGG1995,EG1959,FS1975,FG2015,FS2013}, and their references therein.

Given a graph $G$, let $A(G)$ denote its adjacency matrix and $\rho(G)$ the spectral radius of $A(G)$.
Define ${\rm spex}(n,\mathcal{H})$ as
the maximum spectral radius of any $n$-vertex $\mathcal{H}$-free graph.
A graph $G$ with $n$ vertices is said to be \textit{extremal} for ${\rm spex}(n,\mathcal{H})$,
if $G$ is $\mathcal{H}$-free and $\rho(G)={\rm spex}(n,\mathcal{H})$.
Denote by ${\rm SPEX}(n,\mathcal{H})$ the family of extremal graphs for  ${\rm spex}(n,\mathcal{H})$.
When $\mathcal{H}=\{H\}$, we simplify the notation by writing
${\rm spex}(n,H)$ and ${\rm SPEX}(n,H)$  for ${\rm spex}(n,\mathcal{H})$ and ${\rm SPEX}(n,\mathcal{H})$,
respectively.
A finite graph family $\mathcal{H}$ is said to be \textit{spectral-consistent}
if ${\rm SPEX}(n,\mathcal{H})\subseteq {\rm EX}(n,\mathcal{H})$ for sufficiently large $n$.
In 2020, Cioab\u{a}, Feng, Tait, and Zhang \cite{CFTZ2020} showed that the friendship graph $F_k$ is spectral-consistent.
Recently, Liu and Ning \cite{Lin2022} posed the following problem:
Characterize all graphs $H$ that are spectral-consistent.
By replacing $H$ with a graph family $\mathcal{H}$,
 we arrive at a fundamental problem in spectral extremal graph theory.

\begin{prob}\label{prob1.1}%\emph{(\cite{Liu2023+})}
Characterize all graph families $\mathcal{H}$ that are spectral-consistent.
\end{prob}

Positive evidence for Problem \ref{prob1.1} includes sparse spanning subgraphs and large cycles:
$C_n$ \cite{Fiedler2010}, $C_{n-1}$ \cite{Ge2020},
$C_{\ell}$ ($n-c_1\sqrt{n}\leq\ell\leq n$) \cite{Li2023},
and any $n$-vertex graph $H$ with $\delta(H)\geq1$ and $\Delta(H)\leq {\sqrt{n}}/{40}$ \cite{Liu2023+}.
In contrast to large graphs,
there is also positive evidence on finite graphs,
such as friendship graphs \cite{CFTZ2020}.
On the other hand, Cioab\u{a}, Desai, and Tait \cite{Cioaba2} proved that
almost all odd wheels are not spectral-consistent.
Building on these findings, they proposed a conjecture regarding a finite graph $H$ as follows.

\begin{conj}\emph{(\cite{Cioaba2})}\label{conj2.1}
Let $H$ be any graph such that the graphs in ${\rm EX}(n,H)$ can be obtained from the $p$-partite Tur\'{a}n graph $T_p(n)$ by adding $O(1)$ edges.
Then, $H$ is spectral-consistent.
\end{conj}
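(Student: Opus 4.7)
The plan is to show every $G^{\ast}\in {\rm SPEX}(n,H)$ lies in ${\rm EX}(n,H)$ for large $n$, via the standard three-step paradigm of spectral extremal graph theory: a spectral lower bound, spectral stability, and local structural refinement through the Perron eigenvector. By hypothesis, ${\rm EX}(n,H)$ consists of graphs obtained from $T_p(n)$ by adding $O(1)$ edges, so the Erd\H{o}s-Stone-Simonovits theorem forces $\chi(H)=p+1$; in particular $T_p(n)$ itself is $H$-free, giving
$$\rho(G^{\ast})\;\geq\;\rho(T_p(n))\;\geq\;\left(1-\tfrac{1}{p}\right)n-1.$$
Nikiforov's spectral stability theorem then produces a balanced $p$-partition $V(G^{\ast})=V_1\cup\cdots\cup V_p$ with only $o(n^2)$ edges inside parts and $o(n^2)$ missing cross-edges.

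Next I would bootstrap this stability into exact control by working with the Perron eigenvector $\mathbf{x}$ of $G^{\ast}$. The eigenequation $\rho(G^{\ast})x_v=\sum_{u\sim v}x_u$ combined with Rayleigh-quotient surgery should yield: (i) every Perron entry satisfies $x_v\geq c/\sqrt{n}$ for some absolute constant $c>0$; (ii) $\delta(G^{\ast})\geq (1-\tfrac{1}{p})n-O(1)$, proved by a vertex-relocation argument --- if some $v$ had substantially smaller degree, deleting $v$ and re-inserting it with the neighborhood of a high-weight vertex in the appropriate part would strictly raise $\rho$, contradicting extremality; (iii) the part sizes are $n/p+O(1)$. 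Combining (i)--(iii) forces $G^{\ast}$ to be obtainable from $T_p(n)$ by at most $O(1)$ edge additions together with at most $O(1)$ edge deletions.

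To pin down the edge count exactly, I would compare Rayleigh quotients of $G^{\ast}$ and an arbitrary $G_0\in {\rm EX}(n,H)$ against a near-uniform test vector. Each edge added to or deleted from $T_p(n)$ shifts $\rho$ by $(2+o(1))/n$, so maximizing $\rho$ drives $e(G^{\ast})\geq {\rm ex}(n,H)$, and $H$-freeness of $G^{\ast}$ makes this an equality. The main obstacle will be ruling out configurations where $G^{\ast}$ trades several missing cross-edges for a strictly smaller number of inside-part edges --- such a trade could conceivably keep $\rho$ large while lowering the edge count. Handling this is the delicate part: one performs an \textbf{exchange argument} that swaps an inside-part edge $uv$ (with $x_u\leq x_v$) for a missing cross-edge $vw$ at a vertex $w$ maximising $x_w$ on the opposite side, tracks the sign of the resulting $O(1/n)$ change in $\rho$, and iterates until no improving swap exists. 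The terminal configuration must coincide with a member of ${\rm EX}(n,H)$, which combined with the upper bound $\rho(G^{\ast})\leq \rho(G_0)$ completes the argument.
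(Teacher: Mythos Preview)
Your three-step paradigm mirrors the Wang--Kang--Xue argument that the paper cites as the original proof of this conjecture; the paper itself derives the statement as the $\gamma=1$ case of its Theorem~1.1, and the route there is different. Instead of Nikiforov stability followed by bootstrapping, the paper invokes Zhang's spectral-skeleton lemma (Lemma~2.3), which delivers in one stroke a partition $V(G^{\ast})=V_1\cup\cdots\cup V_p$ with $|V_i|\in\{\lfloor n/p\rfloor,\lceil n/p\rceil\}$ and, more importantly, a constant $\varphi$ such that every vertex outside a set of size $p\varphi$ has neighbourhood \emph{exactly} $V(G^{\ast})\setminus V_i$. This collapses your steps (i)--(iii) and yields the sharp two-sided eigenvector estimate $\sqrt{n}\,x_v=1+O(1/n)$ (Claim~3.1 with $W=\varnothing$) immediately; a single Rayleigh comparison then finishes.

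The genuine soft spot in your plan is the final paragraph. The bound $x_v\ge c/\sqrt n$ in (i) is too coarse: because the edge-count discrepancy you must detect may equal $1$, you need the two-sided control $x_v=(1+O(1/n))/\sqrt n$, so that every edge contributes $2/n+O(1/n^{2})$ to $\mathbf{x}^{T}A\mathbf{x}$. With that in hand the direct inequality
\[
\rho(G_0)-\rho(G^{\ast})\ \ge\ \mathbf{x}^{T}\bigl(A(G_0)-A(G^{\ast})\bigr)\mathbf{x}\ \ge\ \tfrac{2}{n}\bigl(e(G_0)-e(G^{\ast})\bigr)-O(1/n^{2})
\]
already forces $e(G^{\ast})\ge e(G_0)$, and your exchange argument becomes superfluous. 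It is also unsafe as stated: after swapping an inside-part edge $uv$ for a missing cross-edge $vw$, the resulting graph is neither a subgraph of $G^{\ast}$ nor of any $G_0\in{\rm EX}(n,H)$, so nothing guarantees it remains $H$-free, and you cannot invoke the spectral extremality of $G^{\ast}$ to obtain a contradiction. Drop the exchange and instead sharpen the eigenvector estimate (which your step~(iii) does permit, via perturbation from $T_p(n)$), then apply the single Rayleigh inequality above---this is exactly what the paper does.
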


Recently, Wang, Kang, and Xue \cite{WANG-2023} confirmed Conjecture \ref{conj2.1}
by showing that if $\mathrm{ex}(n,H)=e(T_p(n))+O(1)$, then $H$ is spectral-consistent.

Let $p(\mathcal{H})=\min_{H\in \mathcal{H}}\chi(H)-1$, and let $E_t$ be the empty graph of order $t$.
Denote $F\nabla H$ as the join and $F\cup H$ as the union of $F$ and $H$, respectively.
The following concept, introduced by
Simonovits \cite{Simonovits1974},
has proven to be extremely valuable in solving extremal problems.

\begin{definition}\emph{(\cite{Simonovits1974})}\label{definition-1.1}
Given a graph family $\mathcal{H}$ with $p(\mathcal{H})=p\geq 2$,
the \textbf{decomposition family} $\mathcal{M}(\mathcal{H})$ consists of minimal graphs $M$ for which
there exist some $H\in \mathcal{H}$ and some constant $t=t(\mathcal{H})$ such that
$H\subseteq (M\cup E_t)\nabla T_{p-1}((p-1)t)$.
\end{definition}

A direct consequence of this definition is the following proposition.

\begin{pro}\label{prop1.1}
Let $\mathcal{H}$ be a graph family with $p(\mathcal{H})=p\geq 2$.
Then, there must exist bipartite members in $\mathcal{M}(\mathcal{H})$.
\end{pro}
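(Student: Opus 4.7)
The plan is to produce a bipartite member of $\mathcal{M}(\mathcal{H})$ directly from a proper $(p+1)$-coloring of a minimum-chromatic member of $\mathcal{H}$. Since $p(\mathcal{H})=p$, I can choose $H_0\in \mathcal{H}$ with $\chi(H_0)=p+1$ and fix a proper $(p+1)$-coloring of $H_0$ with color classes $V_1,V_2,\ldots,V_{p+1}$. Let $M_0:=H_0[V_1\cup V_2]$. Because $V_1$ and $V_2$ are each independent in $H_0$, the only edges of $M_0$ go between $V_1$ and $V_2$, so $M_0$ is bipartite with bipartition $(V_1,V_2)$.

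Next I would check that $M_0$ already satisfies the membership condition of Definition~\ref{definition-1.1} for a suitable constant $t$. Take $t\geq |V(H_0)|$ and consider the graph $(M_0\cup E_t)\nabla T_{p-1}((p-1)t)$. Embed $V_1\cup V_2$ onto the natural copy inside $M_0$, and embed each of $V_3,\ldots,V_{p+1}$ into a distinct part of $T_{p-1}((p-1)t)$; this is possible since $|V_i|\leq |V(H_0)|\leq t$. Under this embedding, edges of $H_0$ inside $V_1\cup V_2$ are supplied by $M_0$; edges inside each $V_i$ do not exist (proper coloring); edges between two classes among $V_3,\ldots,V_{p+1}$ are supplied by the complete multipartite structure of $T_{p-1}((p-1)t)$; and edges between $V_1\cup V_2$ and $V_3\cup\cdots\cup V_{p+1}$ are supplied by the join. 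Hence $H_0\subseteq (M_0\cup E_t)\nabla T_{p-1}((p-1)t)$.

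Finally, among the (finitely many) subgraphs of $M_0$ that still satisfy the membership condition of Definition~\ref{definition-1.1} for some $H\in \mathcal{H}$ and some constant $t$, I can take one, say $M$, that is minimal. Then $M\in\mathcal{M}(\mathcal{H})$ by definition, and $M$ is bipartite because it is a subgraph of the bipartite graph $M_0$. This would finish the proof.

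The only delicate point is correctly parsing the definition of $\mathcal{M}(\mathcal{H})$: minimality must be with respect to subgraph containment (so that it is inherited when one descends from the bipartite witness $M_0$), and the constant $t$ is allowed to depend on $\mathcal{H}$ alone, which forces us to cap $t$ by something uniform such as $\max_{H\in\mathcal{H}}|V(H)|$ when the family is finite (or to note that only a single $H_0$ is needed to realize the bipartite witness). With that caveat, the argument is a one-line consequence of the definition together with the trivial observation that any subgraph of a bipartite graph is bipartite.
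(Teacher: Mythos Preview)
Your proof is correct and follows essentially the same approach as the paper: pick $H_0\in\mathcal{H}$ with $\chi(H_0)=p+1$, let $M_0$ be the bipartite graph induced by two color classes, observe that $H_0\subseteq (M_0\cup E_t)\nabla T_{p-1}((p-1)t)$, and then pass to a minimal subgraph $M\subseteq M_0$ in $\mathcal{M}(\mathcal{H})$, which inherits bipartiteness. Your discussion of the embedding and of the role of $t$ is a bit more explicit than the paper's, but the argument is the same.
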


\begin{proof}
Recall that $p(\mathcal{H})=\min_{H\in \mathcal{H}}\chi(H)-1$.
Then, there exists an $H\in \mathcal{H}$ such that $\chi(H)=p(\mathcal{H})+1$.
Let $M_0$ be a subgraph of $H$ induced by two partite sets.
Clearly, $M_0$ is bipartite.
Moreover, we have $H\subseteq M_0\nabla T_{p-1}((p-1)|H|)$.
Now, by the definition of $\mathcal{M}(\mathcal{H})$,
we can see that there exists a member $M\in \mathcal{M}(\mathcal{H})$ such that $M\subseteq M_0$.
Since $M_0$ is bipartite,
it follows that $M$ is also bipartite.
Hence, the result holds.
\end{proof}

In particular, if $\mathcal{H}=\{H\}$, then the decomposition family of $H$ is denoted by $\mathcal{M}(H)$.
For an integer $k$, let $K_{k}$, $S_k$, and $C_k$ denote the complete graph, the star, and the cycle on $k$ vertices, respectively.
Additionally,  let $M_{k}$ be the disjoint union of $k$ copies of $K_2$.
Recently,
Fang, Tait, and Zhai \cite{Fang-2025} provided a
characterization for the graph family $\mathcal{H}$ such that its decomposition family
$\mathcal{M}(\mathcal{H})$ contains both a matching and a star.

\begin{thm}\label{THM1.1A}\emph{(\cite{Fang-2025})}
Let $\mathcal{H}$ be a finite graph family
with $p(\mathcal{H})=p\geq 2$ and $\phi=\max\{|H|: H\in \mathcal{H}\}$.
Then, for sufficiently large $n$,
the following statements are equivalent:

\vspace{1mm}
{\rm (i)}  There exist two integers $\nu\leq\lfloor\frac{\phi}{2}\rfloor$
and $\Delta\leq \phi$ such that $M_{\nu},
S_{\Delta+1}\in\mathcal{M}(\mathcal{H})$;

\vspace{1mm}
{\rm (ii)}  $e(T_{p}(n))\leq {\rm ex}(n,\mathcal{H})<e(T_{p}(n))+\lfloor \frac{n}{2p} \rfloor$;

\vspace{1mm}
{\rm (iii)}  Every graph in ${\rm EX}(n,\mathcal{H})$
is obtained from $T_{p}(n)$ by adding and deleting $O(1)$ edges.
%\end{enumerate}
\end{thm}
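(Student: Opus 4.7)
The plan is to establish the cyclic implication $(iii)\Rightarrow(ii)\Rightarrow(i)\Rightarrow(iii)$, in which only the last arrow is substantive. For $(iii)\Rightarrow(ii)$ I would rely on a direct edge count: every $H\in\mathcal{H}$ has $\chi(H)\geq p+1$, so $T_p(n)$ is $\mathcal{H}$-free and yields $e(T_p(n))\leq {\rm ex}(n,\mathcal{H})$, while (iii) forces $|{\rm ex}(n,\mathcal{H})-e(T_p(n))|=O(1)<\lfloor n/(2p)\rfloor$ once $n$ is large.

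For $(ii)\Rightarrow(i)$ I would argue by contrapositive. First observe that any $M\in\mathcal{M}(\mathcal{H})$ satisfies $|V(M)|\leq\phi$ and, by the minimality in Definition~\ref{definition-1.1}, has no isolated vertex; thus the bounds $\nu\leq\lfloor\phi/2\rfloor$ and $\Delta\leq\phi$ in (i) are automatic as soon as some matching or star lies in $\mathcal{M}(\mathcal{H})$. Suppose $\mathcal{M}(\mathcal{H})$ contains no matching. Insert a perfect matching $F$ into a part of $T_p(n)$ of size $\lceil n/p\rceil$; the only subgraphs of $F$ without isolated vertices are matchings, so $F$ avoids every member of $\mathcal{M}(\mathcal{H})$. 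By the defining property of $\mathcal{M}(\mathcal{H})$, an $H$-copy in $T_p(n)\cup F$ would force one part to contain a member of $\mathcal{M}(\mathcal{H})$ (the other $p-1$ parts contribute only a bounded blown-up $T_{p-1}$, available for $n$ large), so $T_p(n)\cup F$ is $\mathcal{H}$-free with at least $e(T_p(n))+\lfloor n/(2p)\rfloor$ edges, contradicting (ii). If instead $\mathcal{M}(\mathcal{H})$ contains no star, the same construction with $F$ a large star in one part yields an even stronger contradiction.

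The substantive direction is $(i)\Rightarrow(iii)$. The key observation is an Erd\H{o}s--Gallai--type bound: a graph avoiding both $M_\nu$ and $S_{\Delta+1}$ has matching number at most $\nu-1$ and maximum degree at most $\Delta$, so via Gallai's inequality $\tau\leq 2\nu$ it contains at most $2(\nu-1)\Delta=O(1)$ edges. I would then apply the Simonovits--Erd\H{o}s stability theorem (extended to finite families) to $G\in{\rm EX}(n,\mathcal{H})$, obtaining a partition $V_1,\dots,V_p$ of $V(G)$ with only $o(n^2)$ misplaced edges. The defining property of $\mathcal{M}(\mathcal{H})$ then forces each $G[V_i]$ to be $\mathcal{M}(\mathcal{H})$-free (otherwise an $H\in\mathcal{H}$ would be assembled from $G[V_i]$ together with a blown-up Tur\'an piece in the remaining parts), so $\sum_i e(G[V_i])=O(1)$ by the preceding bound. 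Writing $e(G)$ as cross-edges plus within-edges, using $\sum_{i<j}|V_i||V_j|\leq e(T_p(n))$ with deficit quadratic in the imbalance and $e(G)\geq e(T_p(n))$, I conclude that the $|V_i|$ deviate from $n/p$ by $O(1)$ and that only $O(1)$ cross-edges are missing from $T_p(n)$. Hence $G$ and $T_p(n)$ differ by $O(1)$ edges.

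The main obstacle is the passage from stability (which controls only the $o(n^2)$ scale) to the $O(1)$ conclusion in $(i)\Rightarrow(iii)$: one needs each $G[V_i]$ to be \emph{exactly} $\mathcal{M}(\mathcal{H})$-free, not merely almost so, since a single ``atypical'' vertex of abnormal degree across parts could otherwise break the decomposition embedding. The standard remedy is a preliminary cleaning step removing $o(n)$ ``atypical'' vertices and then showing that any such deletion would contradict extremality, forcing the partition to be in fact optimal. Carrying out this cleaning and the subsequent reinsertion while preserving the $O(1)$ error is the most delicate part of the argument.
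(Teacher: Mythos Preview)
This theorem is not proved in the present paper at all: it is quoted verbatim from Fang, Tait and Zhai (the reference marked \cite{Fang-2025} in the statement) and is used throughout only as a black box. There is therefore no in-paper argument to compare your outline against.

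On its own merits your scheme is the natural one. The implications $(iii)\Rightarrow(ii)$ and $(ii)\Rightarrow(i)$ are handled correctly; in particular your contrapositive for $(ii)\Rightarrow(i)$, embedding a near-perfect matching (respectively a spanning star) into one class of $T_p(n)$ and invoking the minimality of members of $\mathcal{M}(\mathcal{H})$, is exactly the right construction. For the substantive direction $(i)\Rightarrow(iii)$ you have also put your finger on the genuine difficulty: stability gives only an $o(n^2)$-close partition, which is far too weak to conclude directly that each $G[V_i]$ is $\mathcal{M}(\mathcal{H})$-free, since extending a copy of $M\in\mathcal{M}(\mathcal{H})$ inside $V_i$ to some $H\in\mathcal{H}$ requires finding, in the remaining classes, vertices with \emph{full} cross-adjacency to the vertices of $M$. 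Your proposed remedy, however, is stated too loosely to count as a proof. Phrasing it as ``remove $o(n)$ atypical vertices and show this contradicts extremality'' conflates two separate ingredients: one first needs a minimum-degree step (every vertex of $G\in{\rm EX}(n,\mathcal{H})$ has degree at least $(1-1/p)n-O(1)$, established by a delete-and-reinsert argument that itself requires care to keep $\mathcal{H}$-freeness), and then a refinement of the partition so that every vertex has near-complete cross-degree to each other class. Only after that can one legitimately run the embedding that forces $e(G[V_i])=O(1)$, and then close with your balance/count. So the architecture is right, but the bridge from stability to exact $\mathcal{M}(\mathcal{H})$-freeness of the parts needs to be written out rather than gestured at.
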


By Combining Theorem \ref{THM1.1A} with the result of Wang, Kang, and Xue \cite{WANG-2023},
we conclude that if $\mathcal{M}(H)$ contains both a matching and a star, then
$H$ is spectral-consistent.
Building on this finding, we present the following extended problem.

\begin{prob}\label{prob1.2}
Let $\mathcal{H}$ be a finite graph family with $p(\mathcal{H})\geq 2$.
If $\mathcal{M}(\mathcal{H})$ contains a matching,
is $\mathcal{H}$ necessarily spectral-consistent?
\end{prob}

Positive evidence for Problem \ref{prob1.2} has also been established for certain specific families $\mathcal{H}$,
where $\mathcal{M}(\mathcal{H})$ contains a matching.
These results include families such as intersecting cliques \cite{DKL2022},
$k$ edge-disjoint triangles \cite{Lin2022},
and edge blow-up of star forests \cite{WANG-2024}.

\subsection{Definitions and Notations} \label{sub1.2}

To state our main theorems, we first introduce some definitions and notations.
Given a graph $G$, we use the following standard notation:
the vertex set $V(G)$, the edge set $E(G)$, the order $|G|$, the size $e(G)$,
the maximum degree $\Delta(G)$, and the minimum degree $\delta(G)$, respectively.
Given two disjoint subsets $X,Y\subseteq V(G)$,
let $G[X]$ be the subgraph induced by $X$, $G-X=G[V(G)-X]$,
and $G[X,Y]$ be the bipartite subgraph on the vertex set $X\cup Y$,
consisting of all edges with one
endpoint in $X$ and the other in $Y$.
For short, we write $e(X)=e(G[X])$ and $e(X,Y)=e(G[X,Y])$, respectively.
We also need the parameters $\beta(\mathcal{M}(\mathcal{H}))$ and $\gamma(\mathcal{M}(\mathcal{H}))$ defined with respect to $\mathcal{M}(\mathcal{H})$.
A \emph{covering} of a graph is a set of vertices that intersects all edges.
We define
            $$\beta(\mathcal{M}(\mathcal{H}))=\min\{\beta(M)~|~M\in \mathcal{M}(\mathcal{H})\},$$
where $\beta(M)$ represents the number of vertices in a minimum covering of $M$.

Similarly, an \emph{independent covering} of a bipartite graph is an independent set that intersects all edges.
In view of Proposition \ref{prop1.1}, we define
            $$\gamma(\mathcal{M}(\mathcal{H}))=\min\{\gamma(M)~|~M\in \mathcal{M}(\mathcal{H})~~\text{is bipartite} \},$$
where $\gamma(M)$ denotes the number of vertices in a minimum independent covering of $M$.
When there is no ambiguity,
we write $\beta$ and $\gamma$ to represent
$\beta(\mathcal{M}(\mathcal{H}))$ and $\gamma(\mathcal{M}(\mathcal{H}))$, respectively.
By the definition of $\gamma$,
there exists a bipartite graph
$M_0\in \mathcal{M}(\mathcal{H})$ such that $\gamma(M_0)=\gamma$.
Combining this with the definition of $\beta$,
we obtain
\begin{align}\label{align-001}
\beta\leq\beta(M_0)\leq \gamma(M_0)=\gamma.
\end{align}
If $\beta=\gamma$,
we set $\mathcal{B}(\mathcal{H})=\{K_{\gamma}\}$. Otherwise, define
   $$\mathcal{B}(\mathcal{H})=\{M[S]~|~M\in \mathcal{M}(\mathcal{H}),~S~\text{is a covering of}~M~\text{with}~|S|<\gamma\}.$$

\subsection{Main results}

\begin{definition}\label{definition-1.2}
The decomposition family $\mathcal{M}(\mathcal{H})$ of
a finite graph family $\mathcal{H}$ is said to be  \textbf{matching-good},
if the following conditions are satisfied:

\vspace{1mm}
{\rm (i)}  $p(\mathcal{H})=p\geq 2$, and $\mathcal{M}(\mathcal{H})$ contains a matching;

\vspace{1mm}
{\rm (ii)} There exists a graph $G^{\star}\in {\rm EX}(n,\mathcal{H})$ such that
$G^{\star}=H_1\nabla H_2$,
where $H_1\in {\rm EX}(\gamma-1,\mathcal{B}(\mathcal{H}))$ and $H_2$ differs from $T_{p}(n-\gamma+1)$ by $O(1)$ edges.
\end{definition}

%\begin{remark}
It is intriguing to notice that, to date,
 for any non-bipartite graph $H$ where $\mathcal{M}(H)$ contains a matching and ${\rm ex}(n,H)$ is known,
$\mathcal{M}(H)$ has consistently been found to be matching-good.
This observation naturally gives rise to the following question: For a finite graph $H$, if $\mathcal{M}(H)$ contains a matching,
must it necessarily be matching-good?
%\end{remark}

Our first main theorem is stated as follows:

\begin{thm}\label{thm1.1}
Let $\mathcal{H}$ be a finite graph family whose decomposition family $\mathcal{M}(\mathcal{H})$ is matching-good.
Then, $\mathcal{H}$ is spectral-consistent.
\end{thm}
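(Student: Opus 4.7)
The plan is to follow the standard spectral-extremal recipe: take $G\in{\rm SPEX}(n,\mathcal{H})$ with Perron vector $\mathbf{x}$ normalized by $\max_u x_u=1$, progressively reveal that $G$ inherits the $H_1\nabla H_2$ structure supplied by Definition~\ref{definition-1.2}(ii), and deduce $G\in{\rm EX}(n,\mathcal{H})$. Since $G^\star:=H_1\nabla H_2$ is $\mathcal{H}$-free, we obtain the baseline bound $\rho(G)\geq \rho(G^\star)$, whose leading term is $\tfrac{p-1}{p}n$. Combining a spectral version of Erd\H{o}s-Simonovits stability with the presence of a matching $M_\nu\in\mathcal{M}(\mathcal{H})$ --- which by Definition~\ref{definition-1.1} forbids $\nu$ independent edges inside any single class once a $T_{p-1}$-structure is embedded in the remaining classes --- yields a balanced $p$-partition $V(G)=V_1\cup\cdots\cup V_p$ with $|V_i|=n/p+o(n)$, $e(V_i)=O(1)$, and Perron entries of typical vertices in different parts uniformly close to $1$.

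Next, introduce the heavy set $W:=\{u\in V(G):x_u\geq 1-\varepsilon\}$ for a sufficiently small $\varepsilon=\varepsilon(\mathcal{H})>0$, and target the three properties: (a) $W$ is completely joined to $V(G)\setminus W$; (b) $|W|=\gamma-1$; and (c) $G[W]$ is $\mathcal{B}(\mathcal{H})$-free while $G-W$ differs from $T_p(n-\gamma+1)$ by $O(1)$ edges. For (a), any missing edge $uv$ with $u\in W$, $v\notin W$ would contradict the bound $x_u\geq 1-\varepsilon$ via the eigen-equation $\rho x_u=\sum_{w\sim u}x_w$ and weight-shifting (deleting one edge incident to a low-weight vertex to insert $uv$ would strictly increase $\rho$). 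For the upper bound in (b) and the crux of (c), if $G[W]$ contained a member $M[S]\in\mathcal{B}(\mathcal{H})$, then one could embed the complementary piece $M[V(M)\setminus S]$ into the abundant $T_p$-like structure of $G-W$ and use the full join from (a) to reconstruct some $H\in\mathcal{H}$, contradicting $\mathcal{H}$-freeness. The lower bound $|W|\geq\gamma-1$ is spectral: if $|W|\leq\gamma-2$, direct comparison of $\rho(G)$ with $\rho(G^\star)$ via Rayleigh quotients yields $\rho(G)<\rho(G^\star)$, contradicting extremality of $G$.

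With properties (a)-(c) in hand, the edge count closes the argument: $e(G[W])\leq {\rm ex}(\gamma-1,\mathcal{B}(\mathcal{H}))=e(H_1)$ by definition, while the matching-good hypothesis together with the $O(1)$-closeness of $G-W$ to $T_p(n-\gamma+1)$ gives $e(G-W)\leq e(H_2)$. Summing, $e(G)\leq e(H_1)+(\gamma-1)(n-\gamma+1)+e(H_2)=e(G^\star)={\rm ex}(n,\mathcal{H})$, so $G\in{\rm EX}(n,\mathcal{H})$. The hardest part is the trio (a)-(c) in Step two, requiring delicate two-way arguments that couple the Perron vector with embedding lemmas based on the matching $M_\nu$ and the defining subgraphs of $\mathcal{B}(\mathcal{H})$. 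A secondary difficulty is controlling the $O(1)$ discrepancy between $G-W$ and $T_p(n-\gamma+1)$; this is precisely where the structural half of the matching-good hypothesis (Definition~\ref{definition-1.2}(ii)) becomes indispensable, letting us transfer the edge-count optimality of $H_2$ back to $G-W$.
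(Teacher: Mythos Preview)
Your final edge-counting step has the inequality pointing the wrong way. Since $G$ is $\mathcal{H}$-free, the bound $e(G)\le {\rm ex}(n,\mathcal{H})=e(G^\star)$ is automatic and does not place $G$ in ${\rm EX}(n,\mathcal{H})$; what must be shown is $e(G)\ge e(G^\star)$. Your intermediate claim ``$e(G-W)\le e(H_2)$'' is also unjustified: knowing that both $G-W$ and $H_2$ differ from $T_p(n-\gamma+1)$ by $O(1)$ edges says nothing about which has more edges, and there is no general principle allowing you to swap $G[W]$ for $H_1$ (or $G-W$ for $H_2$) while preserving $\mathcal{H}$-freeness. The paper does not attempt any such replacement. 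Instead it assumes $e(G)<e(G^\star)$ and shows $\rho(G^\star)>\rho(G)$ via the quadratic form $\mathbf{x}^{\mathrm T}(A(G^\star)-A(G))\mathbf{x}$ with the Perron vector $\mathbf{x}$ of $G$. The decisive input is a sharp two-scale estimate (Claim~\ref{claim-A3.1}): vertices outside $W$ satisfy $\sqrt{n}\,x_v=1+O(1/n)$ while vertices in $W$ satisfy $\sqrt{n}\,x_w=\tfrac{p}{p-1}+O(1/n)$. Thus an edge inside $W$ carries weight $\tfrac{p^2}{(p-1)^2}$ in the Rayleigh comparison, an edge from $W$ to its complement carries weight $\tfrac{p}{p-1}$, and all other edges carry weight $1$. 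Writing the symmetric differences of $E(G)$ and $E(G^\star)$ against the common scaffold $K=E_{\gamma-1}\nabla T_p(n-\gamma+1)$ as constant-size sets $E_1^*,\dots,E_4^*$ and $E_1^\star,E_3^\star,E_4^\star$, one combines $|E_1^*|\le|E_1^\star|$ (your step~(c), which the paper also proves) with the assumed edge deficit to obtain a strict weighted inequality, and this forces $\rho(G^\star)-\rho(G)>0$, the desired contradiction.

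As a secondary remark, your structural steps (a)--(c) are not derived from scratch in the paper: the partition $V(G)=W\cup V_1\cup\cdots\cup V_p$ with $|W|=\gamma-1$ and near-complete join on all but $O(1)$ vertices is quoted directly from a black-box lemma of Zhang (restated as Lemma~\ref{LEMMA2.3}). In particular the paper never asserts that $W$ is \emph{completely} joined to its complement --- possible missing edges from $W$ to the exceptional set $\cup_i V_i''$ are tracked by the set $E_2^*$ and absorbed into the weighted comparison above --- so your target (a) may be stronger than what is actually true, and in any case is not needed.
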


By Theorem \ref{THM1.1A}, $\mathrm{ex}(n,H)=e(T_p(n))+O(1)$ if and only if $\mathcal{M}(H)$ contains both a star and a matching as its members.
Let $H$ be any graph defined in Conjecture \ref{conj2.1}.
This case corresponds to $\gamma=1$ in Definition \ref{definition-1.2},
which implies that $\mathcal{M}(H)$ is matching-good.
Consequently, Theorem \ref{thm1.1} strengthens Conjecture \ref{conj2.1} and offers a partial answer to Problem \ref{prob1.1} and Problem \ref{prob1.2}.
In the following, we present some important applications of Theorem \ref{thm1.1}.

\subsubsection{Vertex-disjoint union of graphs with matching-good decomposition families}

Ni, Wang, and Kang \cite{NWK2023+} proved that if
$H$ is a finite graph such that $\mathrm{ex}(n,H)=e(T_p(n))+O(1)$ for sufficiently large $n$,
then $\mathcal{M}(tH)$ is matching-good and $tH$ is spectral-consistent.
For the sake of applications, we use a completely different method to generalize their result as follows.

\begin{thm}\label{thm1.9A}
Let $t\geq 1$, $p\geq 2$ be integers, and for each $i\in [t]=\{1,2,\ldots,t\}$, let $H_i$ be a finite graph such that $\mathrm{ex}(n,H_i)=e(T_p(n))+O(1)$.
Then, $\mathcal{M}(\cup_{i\in [t]}H_i)$ is matching-good.
\end{thm}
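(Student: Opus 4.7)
The plan is to verify both conditions of Definition~\ref{definition-1.2} directly for $F=\bigcup_{i\in[t]}H_i$. By Erd\H{o}s--Stone--Simonovits together with the hypothesis $\mathrm{ex}(n,H_i)=e(T_p(n))+O(1)$, we have $\chi(H_i)=p+1$ for every $i$, whence $p(F)=p\geq 2$. Theorem~\ref{THM1.1A} then supplies, for each $i$, a matching $M_{\nu_i}\in\mathcal{M}(H_i)$ and a star $S_{\Delta_i+1}\in\mathcal{M}(H_i)$.

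For condition~(i), I would first exhibit a matching inside $\mathcal{M}(F)$. Put $M^{\ast}=\bigcup_{i\in[t]}M_{\nu_i}$, which is itself a matching. Since $H_i\subseteq (M_{\nu_i}\cup E_{s_i})\nabla T_{p-1}((p-1)s_i)$ for some $s_i$, placing the $t$ embeddings on disjoint portions of a single ambient graph yields $F\subseteq(M^{\ast}\cup E_s)\nabla T_{p-1}((p-1)s)$ for $s=\sum_i s_i$. By the minimality clause in the definition of $\mathcal{M}(F)$, some subgraph of $M^{\ast}$ belongs to $\mathcal{M}(F)$, and every subgraph of a matching is again a matching.

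Next I would compute the parameters $\gamma=\gamma(\mathcal{M}(F))$, $\beta=\beta(\mathcal{M}(F))$, and $\mathcal{B}(F)$. Let $M^{\star\star}=\bigcup_{i\in[t]}S_{\Delta_i+1}$. The minimality of each $S_{\Delta_i+1}$ in $\mathcal{M}(H_i)$ propagates directly to show that $M^{\star\star}\in\mathcal{M}(F)$, since deleting any leaf or center from the $j$th star breaks the embedding of $H_j$ and cannot be compensated by reallocating stars among the $H_i$'s (pigeonhole on $t$ copies needing $t$ valid stars). The $t$ star-centers form an independent vertex cover of $M^{\star\star}$, giving $\gamma\leq t$ and $\beta\leq t$. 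For the matching lower bound, every bipartite $M\in\mathcal{M}(F)$ contains $tK_2$ as a subgraph: because $\chi(H_i)=p+1>p$, every embedded copy of $H_i$ must contribute at least one edge inside the $M$-part of the ambient graph (otherwise $H_i\subseteq T_{p-1}((p-1)s)\cup E_s$, which is $p$-chromatic), and vertex-disjointness of the $t$ copies forces these edges to be pairwise vertex-disjoint. Consequently $\beta(M)\geq t$ and $\gamma(M)\geq t$, so $\beta=\gamma=t$; by the convention of Subsection~\ref{sub1.2}, $\mathcal{B}(F)=\{K_t\}$.

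For condition~(ii), set $G^{\star}=K_{t-1}\nabla T_p(n-t+1)$; then $H_1=K_{t-1}\in\mathrm{EX}(t-1,K_t)$ and $H_2=T_p(n-t+1)$ differs from itself by $0$ edges. The $F$-freeness of $G^{\star}$ follows from a pigeonhole argument: any embedding of $F$ into $G^{\star}$ would force each $H_i$ (being $(p+1)$-chromatic while $T_p(n-t+1)$ is $p$-chromatic) to place at least one vertex into $V(K_{t-1})$, and the $t$ vertex-disjoint copies then demand $\geq t$ vertices from $V(K_{t-1})$, contradicting $|V(K_{t-1})|=t-1$. The main obstacle is to prove the extremality $G^{\star}\in\mathrm{EX}(n,F)$. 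I would adapt the stability framework of \cite{WANG-2023} to the family setting: any $F$-free graph $G$ with $e(G)\geq e(G^{\star})$ exceeds $e(T_p(n))$ by $\Theta(n)$, so by Erd\H{o}s--Simonovits stability one can isolate a small exceptional set $U$ with $|U|\leq t-1$ such that $G-U$ is close to $T_p(n-|U|)$; the constraint $\mathcal{B}(F)=\{K_t\}$ forces $G[U]$ to be $K_t$-free; and a careful edge count comparing $e(G)$ to $e(G^{\star})$ completes the upper bound. The delicate part is the $O(1)$ slack permitted in condition~(ii): one must argue that any extra edges inside $G-U$ are accommodated only if $G$ itself admits the claimed join decomposition, which reduces to a finite case analysis on how the $H_i$'s could thread through non-partite edges in $G-U$.
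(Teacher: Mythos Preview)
Your computation of the parameters $\gamma=\beta=t$ and $\mathcal{B}(F)=\{K_t\}$ is essentially sound (the minimality of $M^{\star\star}$ is not needed, and the matching lower bound argument works for every $M\in\mathcal{M}(F)$, not just bipartite ones). The real problem is your verification of condition~(ii).

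You fix $G^{\star}=K_{t-1}\nabla T_p(n-t+1)$ and then attempt to prove $G^{\star}\in\mathrm{EX}(n,F)$. This is false in general. Already for $t=1$ and $H_1=F_2$ (the friendship graph of two triangles), one has $\mathrm{ex}(n,F_2)=\lfloor n^2/4\rfloor+1$, so $T_2(n)$ is strictly \emph{not} extremal. For $t=2$ with $H_1=H_2=F_2$, the graph $K_1\nabla(T_2(n-1)+\text{one edge})$ is $2F_2$-free (since $T_2(n-1)+\text{edge}$ is $F_2$-free, every copy of $F_2$ must use the universal vertex) and has strictly more edges than your $K_1\nabla T_2(n-1)$. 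The ``$O(1)$ slack'' you mention at the end is not slack in the \emph{proof}; it is slack in the \emph{object} $G^{\star}$ itself, which must be allowed to carry $O(1)$ extra edges in $H_2$. Your stability sketch cannot establish extremality of a graph that is not extremal, and the ``finite case analysis'' you invoke has no target once $G^{\star}$ is misidentified.

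The paper avoids guessing $G^{\star}$ altogether. It invokes Simonovits' symmetrization lemma (Lemma~\ref{LEMMA2.4}) to produce \emph{some} extremal graph $G\in\mathcal{D}(n,p,\varphi)$, and then exploits the rigid symmetric structure of $\mathcal{D}(n,p,\varphi)$: each $G_i$ must be edgeless (else a large matching appears), every vertex of the exceptional set $R$ misses at most one class $U_i$, balance forces $|U_i\cup U_i'|$ to be near $n/p$, and a counting argument pins $|W|=t-1$. This yields $G\cong K_{t-1}\nabla H''$ with $H''$ differing from $T_p(n-t+1)$ by $O(1)$ edges, which is exactly what Definition~\ref{definition-1.2}(ii) requires. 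The key idea you are missing is that one should \emph{extract} the structure of an extremal graph from Simonovits' lemma rather than postulate it.
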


For $r\geq 2$, $t\geq 1$ and $k_1\geq k_2\geq \cdots \geq k_t\geq 1$, let
\begin{equation}\label{equation-0A}
\begin{array}{ll}
H=\cup_{i\in [t]}H_i~~\text{with}~~H_i:=K_1\nabla (\bigcup_{j\in [k_i]}H_{i_j}),
~\chi(H_{i_j})=p,~i\in [t],~j\in [k_i],
\end{array}
\end{equation}
where each $H_{i_j}$ is a 1-color-critical graph with $i\in [t]$ and $j\in [k_i]$.
Hou, Li and Zeng \cite{Hou-2024} showed that for sufficiently large $n$ and each $i\in [t]$,
$$\mathrm{ex}(n,H_i)=e\big(T_p(n)\big)+\left\{
                                       \begin{array}{ll}
                                        k_i^2-k_i & \hbox{if $k_i$ is odd,} \\
                                        k_i^2-\frac{3}{2}k_i  & \hbox{if $k_i$ is even.}
                                       \end{array}
                                     \right.
$$
Combining this with Theorem \ref{thm1.1} and Theorem \ref{thm1.9A}, we can directly obtain the following result,
which was recently given by Chen, Lei, and Li \cite{Chen2025}.

\begin{thm}\label{thm1.10A}\emph{(\cite{Chen2025})}
Let $H$ be any graph defined in \eqref{equation-0A}.
Then, $H$ is spectral-consistent.
\end{thm}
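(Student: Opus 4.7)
The plan is to combine the two main structural results already established in the paper, namely Theorem \ref{thm1.1} and Theorem \ref{thm1.9A}, and to verify their hypotheses on the concrete family defined in \eqref{equation-0A}. The argument reduces essentially to the Hou-Li-Zeng formula quoted immediately before the theorem.

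First I would identify the relevant chromatic parameter. Since each $H_{i_j}$ is a $1$-color-critical graph with $\chi(H_{i_j})=p$, the disjoint union $\bigcup_{j\in[k_i]}H_{i_j}$ has chromatic number $p$, and hence $\chi(H_i)=\chi(K_1\nabla(\bigcup_{j\in[k_i]}H_{i_j}))=p+1$ for every $i\in[t]$. In particular $\chi(H)=p+1$ and $p(\{H\})=p\geq 2$, which matches the setting required by Theorem \ref{thm1.9A}.

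Next I would invoke the Hou-Li-Zeng formula, which states that
\[
\mathrm{ex}(n,H_i)=e(T_p(n))+\begin{cases}k_i^2-k_i, & \text{if }k_i\text{ is odd,}\\ k_i^2-\tfrac{3}{2}k_i, & \text{if }k_i\text{ is even,}\end{cases}
\]
for $n$ sufficiently large. Because the parameters $k_1,\dots,k_t$ are fixed constants of $H$, the additive term depends only on $H$ and not on $n$, so $\mathrm{ex}(n,H_i)=e(T_p(n))+O(1)$ for each $i\in[t]$. This is precisely the hypothesis of Theorem \ref{thm1.9A}, and applying that theorem yields that $\mathcal{M}(H)=\mathcal{M}(\cup_{i\in[t]}H_i)$ is matching-good in the sense of Definition \ref{definition-1.2}.

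Finally, applying Theorem \ref{thm1.1} with $\mathcal{H}=\{H\}$ concludes that $H$ is spectral-consistent, as required. I do not foresee a genuine obstacle here: the deep content is absorbed entirely by Theorem \ref{thm1.1} and Theorem \ref{thm1.9A}, and the only point to verify is that the additive constants supplied by the Hou-Li-Zeng formula are independent of $n$, which is immediate since each $k_i$ is a fixed parameter of $H$.
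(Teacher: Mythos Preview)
Your proposal is correct and matches the paper's own argument essentially verbatim: the paper states that Theorem~\ref{thm1.10A} follows ``directly'' by combining the Hou--Li--Zeng formula $\mathrm{ex}(n,H_i)=e(T_p(n))+O(1)$ with Theorem~\ref{thm1.9A} and Theorem~\ref{thm1.1}. Your verification that $\chi(H_i)=p+1$ (so that $p(\{H\})=p\geq 2$) is the only detail the paper leaves implicit, and it is immediate.
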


\subsubsection{Generalized color-critical graph family}

Recall that $p(\mathcal{H})=\min_{H\in \mathcal{H}}\chi(H)-1$.
Inspired by the work of Simonovits in \cite{Simonovits1974,Simonovits1999},
we introduce a class of graphs called generalized color-critical graph family.

\begin{definition}\label{definition-1.3}
Let $q\geq 1$, and let $\mathcal{H}$ be a finite graph family with $p(\mathcal{H})=p\geq 2$.
The family $\mathcal{H}$ is said to be \textbf{$q$-color-critical}
if it satisfies the following conditions:

\vspace{1mm}
{\rm (i)}  For any $H\in \mathcal{H}$ and any $(q-1)$-subset $S\subseteq V(H)$, we have $\chi(H-S)\geq p+1$;

\vspace{1mm}
{\rm (ii)}  There exists a graph $H\in \mathcal{H}$ that admits a
 $(p+1)$-coloring, where the subgraph induced by the first two colors consists of $q$ independent edges and possibly some isolated vertices.
\end{definition}

By Definition \ref{definition-1.3}, a graph $H$ is 1-color-critical if and only if it contains an edge whose deletion reduces its chromatic number.
In other words, the decomposition family of $H$ consists solely of an edge.
It is clear that odd cycles, book graphs, even wheels, and complete graphs are all 1-color-critical graphs.
Furthermore, by Definition \ref{definition-1.3} (ii),
the decomposition family of any $q$-color-critical graph family $\mathcal{H}$ contains a matching of size $q$.
Let $H(n,p,q)=K_{q-1}\nabla T_{p}(n-q+1)$, where $H(n,p,1)=T_{p}(n)$.
The following theorem, known as the generalized color-critical theorem, was obtained by Simonovits
(see \cite[Theorem 2.2]{Simonovits1974} and \cite[Theorem 1.7]{Simonovits1999}).

\begin{thm}\emph{(\cite{Simonovits1974,Simonovits1999})}\label{TH1.3}
Let $\mathcal{H}$ be a $q$-color-critical graph family
with $p(\mathcal{H})=p\geq 2$.
Then, for sufficiently large $n$, we have $\mathrm{EX}(n,\mathcal{H})=\{H(n,p,q)\}$.
\end{thm}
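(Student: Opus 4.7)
The plan is to follow Simonovits' classical approach: establish a matching lower bound by exhibiting $H(n,p,q)$ as an $\mathcal{H}$-free graph, and then force any extremal graph to coincide with it by a stability-plus-clean-up argument driven by Definition~1.3.

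For the lower bound I would first verify that $H(n,p,q)=K_{q-1}\nabla T_p(n-q+1)$ is $\mathcal{H}$-free. If some $H\in\mathcal{H}$ embedded into it, let $S$ be the intersection of $V(H)$ with the $K_{q-1}$ part; then $|S|\le q-1$ and $H-S$ is a subgraph of $T_p(n-q+1)$, hence $p$-colorable, contradicting Definition~1.3(i). Counting edges yields
\[
\mathrm{ex}(n,\mathcal{H})\ge \binom{q-1}{2}+(q-1)(n-q+1)+e(T_p(n-q+1)).
\]

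For the upper bound, let $G\in\mathrm{EX}(n,\mathcal{H})$. By the Erd\H{o}s--Stone--Simonovits theorem applied to $\mathcal{H}$ (whose minimum chromatic number is $p+1$), $e(G)=e(T_p(n))+o(n^2)$; the Erd\H{o}s--Simonovits stability theorem then yields a balanced $p$-partition $V_1\cup\cdots\cup V_p$ of $V(G)$ with at most $o(n^2)$ ``bad'' edges (edges inside parts) and at most $o(n^2)$ ``missing'' edges (non-edges between parts). Next I would iteratively trim: if some vertex $v$ satisfied $d_G(v)<(1-\tfrac{1}{p})n-\epsilon n$, then $e(G-v)$ would still exceed $\mathrm{ex}(n-1,\mathcal{H})$ by the edge-count of $H(n-1,p,q)$, so $G-v$ would contain a copy of some $H\in\mathcal{H}$, a contradiction. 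Hence every vertex has near-Tur\'an degree, which in turn forces each vertex to have only $o(n)$ neighbors inside its own part.

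The crux is to show that the total number of bad edges is at most $\binom{q-1}{2}$ and that the vertices covering them form a universal $K_{q-1}$. Here I would use Definition~1.3(ii): there is an $H\in\mathcal{H}$ and a $(p+1)$-coloring of $H$ in which the first two color classes induce exactly $q$ independent edges plus isolated vertices. Suppose $G$ contained $q$ pairwise vertex-disjoint bad edges $e_1,\dots,e_q$. Each $e_i$ together with the other $p-1$ parts behaves (after discarding the $o(n)$ defective vertices) like the join $qK_2 \nabla T_{p-1}((p-1)t)$ for any constant $t$; the large minimum degree guarantees that every small constant-sized vertex set has $\Theta(n)$ common neighbors in each part, and a standard greedy/Kővári--Sós--Turán type embedding then produces a copy of $H$, a contradiction. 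Consequently the set $S$ of vertices incident to bad edges has $|S|\le 2(q-1)$. A short refinement using Definition~1.3(i) (removing $S$ must still leave something of chromatic number $\ge p+1$ in every potential $H$-embedding) shows that in fact every vertex of $S$ is adjacent to all of $V(G)\setminus S$, that $G[S]$ is complete, and that $|S|=q-1$. Then $G-S$ is a $\mathcal{H}$-free $p$-partite graph with maximum number of edges on $n-q+1$ vertices, which is $T_p(n-q+1)$, so $G=H(n,p,q)$.

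The main obstacle is the embedding argument in the penultimate step: turning a small constant number of bad edges into a copy of the specific $H\in\mathcal{H}$ guaranteed by Definition~1.3(ii). This requires a careful joint use of the near-minimum-degree condition (to supply common neighbors across the $p-1$ remaining parts) and the precise bipartite structure prescribed by the two-color piece of the $(p+1)$-coloring of $H$; once the right common-neighborhood estimates are in place, the embedding is a routine greedy construction.
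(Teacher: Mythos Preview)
The paper does not prove Theorem~\ref{TH1.3}; it is quoted as a result of Simonovits (see \cite{Simonovits1974,Simonovits1999}) and then used as a black box in the proof of Theorem~\ref{TH1.6}. So there is no ``paper's own proof'' to compare against. For context, Simonovits' original arguments go through his symmetrization/progressive-induction machinery (the $\mathcal{D}(n,p,\varphi)$ family, cf.\ Lemma~\ref{LEMMA2.4} here), which first produces a highly symmetric extremal graph and then identifies it; your sketch instead follows the more modern stability-plus-cleanup route. Both are viable in principle, and your lower-bound half is exactly right.

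That said, your upper-bound sketch has a genuine gap at the ``short refinement'' step. You correctly deduce from Definition~\ref{definition-1.3}(ii) that the bad edges contain no matching of size $q$, hence can be covered by at most $2(q-1)$ vertices. But your appeal to Definition~\ref{definition-1.3}(i) to collapse this set $S$ down to a universal $K_{q-1}$ is not justified: condition~(i) is a statement about deleting $q-1$ vertices \emph{from a member $H\in\mathcal{H}$}, and since $G$ is $\mathcal{H}$-free there is no copy of $H$ inside $G$ to which it can be applied. What is actually needed here is an extremality argument: compare $e(G)$ with $e(H(n,p,q))$, use that every vertex of $S$ already has near-full degree to each $V_j$, and show that any configuration other than ``$|S|=q-1$, $G[S]$ complete, $S$ joined to everything, and $G-S=T_p(n-q+1)$'' loses at least one edge. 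This edge-counting/balancing step is where the real work lies, and it is not the ``short'' consequence of (i) that you suggest.
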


The following graphs are demonstrated to be generalized color-critical:
the disjoint union of 1-color-critical graphs with the same chromatic number
(see \cite[p. 356]{Simonovits1974});
the dodecahedron graph $D^{20}$ (see \cite[p. 6]{Simonovits1999});
the Petersen graph $P^{10}$ (see \cite[p. 7]{Simonovits1999});
and the Kneser graph $K(t,2)$ for $t\geq 6$ (see \cite[p. 22]{Ma2025}).
By using Theorem \ref{TH1.3}, Simonovits obtained the following theorem.

\begin{thm}\label{TH1.4}
For sufficiently large $n$, we have

\vspace{1mm}
{\rm (i)}  \emph{(\cite[Theorem 2.1]{Simonovits1974})} $\mathrm{EX}(n,\cup_{i=1}^{q}H_i)=\{H(n,p,q)\}$,
where $H_i$ is a 1-color-critical graph with $\chi(H_i)=p+1$ for each $i\in [q]$;

\vspace{1mm}
{\rm (ii)}  \emph{(\cite[Theorem 2.4]{Simonovits1974})} $\mathrm{EX}(n,D^{20})=\{H(n,2,6)\}$;

\vspace{1mm}
{\rm (iii)}  \emph{(\cite[Theormem 1.6]{Simonovits1999})} $\mathrm{EX}(n,P^{10})=\{H(n,2,3)\}$.
\end{thm}

We now present a spectral version of Simonovits' generalized color-critical theorem.

\begin{thm}\label{TH1.6}
Let $\mathcal{H}$ be a $q$-color-critical graph family
with $p(\mathcal{H})=p\geq 2$.
Then, $\mathcal{M}(\mathcal{H})$ is matching-good, and consequently,
$\mathrm{SPEX}(n,\mathcal{H})=\{H(n,p,q)\}$ for sufficiently large $n$.
\end{thm}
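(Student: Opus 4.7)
The plan is to verify that $\mathcal{M}(\mathcal{H})$ is matching-good in the sense of Definition \ref{definition-1.2}, so that Theorem \ref{thm1.1} combined with Theorem \ref{TH1.3} immediately yields $\mathrm{SPEX}(n,\mathcal{H}) \subseteq \mathrm{EX}(n,\mathcal{H}) = \{H(n,p,q)\}$ for large $n$. The essential work reduces to pinning down the parameters $\beta$ and $\gamma$ associated with $\mathcal{M}(\mathcal{H})$ and recognizing that Simonovits' extremal graph has the required join structure.

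I would first establish a uniform lower bound $\beta(M) \geq q$ for every $M \in \mathcal{M}(\mathcal{H})$. Fix such an $M$ together with $H \in \mathcal{H}$ and $t$ satisfying $H \subseteq (M \cup E_t) \nabla T_{p-1}((p-1)t)$, and let $S$ be a minimum vertex cover of $M$. Removing the vertices of $S$ from the left factor kills every edge of $M \cup E_t$, so $H - S$ embeds into $E_{|V(M)|+t-|S|} \nabla T_{p-1}((p-1)t)$, a graph of chromatic number at most $p$. If $|S| \leq q-1$ this contradicts Definition \ref{definition-1.3}(i), forcing $\beta(M) \geq q$. Definition \ref{definition-1.3}(ii) then produces some $H \in \mathcal{H}$ with a $(p+1)$-coloring whose first two classes induce exactly $M_q$ together with isolated vertices, so $H \subseteq (M_q \cup E_t) \nabla T_{p-1}((p-1)t)$ for $t$ large. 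Any proper subgraph of $M_q$ that could serve as a witness for some member of $\mathcal{H}$ would be a matching $M_\nu$ with $\nu < q$ and $\beta(M_\nu) = \nu < q$, which the previous bound excludes from $\mathcal{M}(\mathcal{H})$. Hence $M_q \in \mathcal{M}(\mathcal{H})$ and $\beta = q$; combined with $\gamma \geq \beta$ and $\gamma(M_q) = q$, we obtain $\gamma = q$, so the rule in Subsection \ref{sub1.2} gives $\mathcal{B}(\mathcal{H}) = \{K_q\}$.

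To finish, Theorem \ref{TH1.3} identifies the unique extremal graph as $H(n,p,q) = K_{q-1} \nabla T_p(n-q+1)$, which I take as $G^{\star}$ with $H_1 = K_{q-1}$ and $H_2 = T_p(n-\gamma+1)$. Since $K_{q-1}$ is the unique element of $\mathrm{EX}(q-1,K_q) = \mathrm{EX}(q-1, \mathcal{B}(\mathcal{H}))$ and $H_2$ coincides with $T_p(n-\gamma+1)$, both clauses of Definition \ref{definition-1.2} are verified. Theorem \ref{thm1.1} then yields spectral-consistency, and Theorem \ref{TH1.3} identifies $\mathrm{SPEX}(n,\mathcal{H})$ with $\{H(n,p,q)\}$. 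The delicate step is the cover bound $\beta(M) \geq q$, which links the abstract minimality defining $\mathcal{M}(\mathcal{H})$ to the concrete hypothesis $\chi(H-S) \geq p+1$ of Definition \ref{definition-1.3}(i); once it is in place, the remainder is bookkeeping.
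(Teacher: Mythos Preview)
Your proof is correct and follows essentially the same approach as the paper: establish $\beta(M) \geq q$ for every $M \in \mathcal{M}(\mathcal{H})$ via Definition~\ref{definition-1.3}(i), deduce $M_q \in \mathcal{M}(\mathcal{H})$ and hence $\gamma = \beta = q$, then verify Definition~\ref{definition-1.2} using Theorem~\ref{TH1.3}. The paper reaches $M_q \in \mathcal{M}(\mathcal{H})$ by way of K\"onig's theorem applied to a bipartite witness for $\gamma$, whereas you argue directly from the uniform bound $\beta(M) \geq q$ to exclude any smaller matching; this is a minor streamlining rather than a substantive difference.
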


\begin{remark}
Based on Theorem \ref{TH1.6},
for any single $q$-color-critical graph $H$ with $\chi(H)=p+1\geq 3$,
we have $\mathrm{SPEX}(n,H)=\{H(n,p,q)\}$.
This result can also be derived by combining Theorem \ref{TH1.3}
with a result of Zhang \cite{Zhang2025+}.
Specifically, Theorem \ref{TH1.3} established that $\mathrm{EX}(n,H)=\{H(n,p,q)\}$,
which implies that $H(n,p,q)$ is $H$-free. Meanwhile,
Zhang \cite[Theorem 1.6]{Zhang2025+} showed that
for integers $p\geq 2$ and $1\leq q \leq \frac{m}{2}$,
the graph $H^*=(qK_2\cup E_{m-2q})\nabla T_{m(p-1),p-1}$ satisfies
$\mathrm{SPEX}(n,H^*)=\{H(n,p,q)\}$ when $n$ is sufficiently large.
Notably, $H^*$ is a single $q$-color-critical graph, and $H\subseteq H^*$ provided that $m\geq |H|$.
Combining these findings yields $\mathrm{SPEX}(n,H)=\{H(n,p,q)\}$.
\end{remark}

Theorem \ref{TH1.6} implies some previous results for special cases of $H$,
including 1-color-critical graphs \cite{Guiduli-1996,Nikiforov5,Nikiforov10,ZL2023},
and the disjoint union of 1-color-critical graphs with the same chromatic number
\cite{Fang-2024,Lei-2024,NWK2023}.
By Theorems \ref{TH1.4} and \ref{TH1.6},
we further derive the following corollary.

\begin{cor}
For sufficiently large $n$, we have

\vspace{1mm}
{\rm (i)}  $\mathrm{SPEX}(n,D^{20})=\{H(n,2,6)\}$;

\vspace{1mm}
{\rm (ii)}  $\mathrm{SPEX}(n,P^{10})=\{H(n,2,3)\}$.
\end{cor}

\subsubsection{Odd-ballooning of trees and complete bipartite graphs}

An odd-ballooning of $H$, denoted by $H^{odd}$, is a graph obtained from
$H$ by replacing each edge of $H$ with an odd cycle,
where the new vertices of different odd cycles are distinct.
In particular, the friendship graph $F_k$ is the odd-ballooning of a star $K_{1,k}$, constructed by replacing each edge with a triangle.
Cioab\u{a}, Feng, Tait, and Zhang \cite{CFTZ2020} showed that the friendship graph $F_k$ is spectral-consistent,
and later, Li and Peng \cite{Li} extended this result by proving that the odd ballooning of any star is spectral-consistent.
Subsequently, Zhai, Liu, and Xue \cite{Zhai2022} characterized the unique extremal graph in $\mathrm{SPEX}(n,F_k)$ for sufficiently large $n$.

\begin{remark}
Throughout this paper,
$H^{odd}$ denotes the odd-ballooning of a graph $H$, where each edge is replaced by an odd cycle of length at least five.
Notably, odd cycles replacing distinct edges may have different lengths.
\end{remark}

For integers $p\geq 2$ and $q\geq 1$,
let $\mathcal{H}(n,p,q,\mathcal{B},F)$ denote the set of graphs obtained from $Q\nabla T_{p}(n-q+1)$
for some $Q\in {\rm EX}(q-1,\mathcal{B})$ (provided that $q\geq 2$),
by embedding a copy of $F$ into one class of $T_{p}(n-q+1)$.

Recently, Zhu and Chen \cite{Zhu2023} established the following result.
In fact, they provided a stronger version;
however, for the sake of simplicity, we present only the version stated below.

\begin{thm}\label{thm2.2}\emph{(\cite{Zhu2023})}
Let $T$ be a tree with partite sets $A$ and $B$ with $|A|\leq |B|$ and $\delta_{T}(A)=k$,
where $\delta_{T}(A)=\min_{v\in A}d_T(v)$.
Then, for sufficiently large $n$, we have
$$\mathcal{H}(n,2,\gamma(\mathcal{M}(T^{odd})),\mathcal{B}(T^{odd}),K_{k-1,k-1})\subseteq \mathrm{EX}(n,T^{odd}).$$
\end{thm}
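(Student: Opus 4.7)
The plan is to verify both properties required of any $G \in \mathcal{H}(n,2,\gamma(\mathcal{M}(T^{odd})),\mathcal{B}(T^{odd}),K_{k-1,k-1})$: that $G$ is $T^{odd}$-free, and that $e(G) = {\rm ex}(n, T^{odd})$. Write $\gamma = \gamma(\mathcal{M}(T^{odd}))$ for brevity and fix such a $G$; by the definition of $\mathcal{H}$, we may write $G = Q \nabla G_0$, where $Q \in {\rm EX}(\gamma - 1, \mathcal{B}(T^{odd}))$ and $G_0$ is obtained from $T_2(n-\gamma+1)$ with partite classes $V_1, V_2$ by embedding a copy of $K_{k-1,k-1}$ inside $V_1$.

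First I would establish $T^{odd}$-freeness. Suppose for contradiction that some copy of $T^{odd}$ embeds in $G$. Every edge of $T^{odd}$ lies in an odd cycle of length at least five; since $G[V_1, V_2]$ is bipartite, each such odd cycle must touch $V(Q)$ or the embedded $K_{k-1,k-1}$. Treating $V(Q) \cup V_1$ as the ``$X$-side'' and $V_2$ as the ``$Y$-side'' of the join $(M \cup E_t) \nabla E_t$ in the definition of the decomposition family, the embedded $T^{odd}$ must witness some $M \in \mathcal{M}(T^{odd})$ realised within the $X$-side. Taking a minimum bipartite independent cover of $M$ of size $\gamma$, the cover vertices split between $V(Q)$ and $V_1$; the residual subgraphs supported in $V(Q)$ are then members of $\mathcal{B}(T^{odd})$, which contradicts $Q \in {\rm EX}(\gamma - 1, \mathcal{B}(T^{odd}))$, unless the cover is entirely contained in $V_1$---in which case a counting argument driven by $\delta_T(A) = k$ forces a $K_{k,k}$ inside $V_1$, contradicting the fact that $V_1$ carries only $K_{k-1,k-1}$.

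For the matching upper bound on ${\rm ex}(n,T^{odd})$, I would invoke the Simonovits stability programme. Since $p(T^{odd}) = 2$, the Erd\H{o}s--Stone--Simonovits theorem gives ${\rm ex}(n,T^{odd}) = (1+o(1))e(T_2(n))$, and a stability version forces every near-extremal $T^{odd}$-free graph $G^{*}$ to be obtainable from $T_2(n)$ by editing $o(n^2)$ edges. I would then peel off at most $\gamma - 1$ ``exceptional'' vertices whose side-degrees deviate from the typical pattern; by a Zykov-type symmetrisation the exceptional set can be made to induce a graph in ${\rm EX}(\gamma-1,\mathcal{B}(T^{odd}))$, while every other vertex is complete to exactly one side of the remaining bipartition. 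A local switching argument then shows that the only extra edges that can be packed inside a single class of the remainder without creating $T^{odd}$ form a $K_{k,k}$-free subgraph, and among these $K_{k-1,k-1}$ maximises the edge count.

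The main obstacle will be this final switching and clean-up step: proving that any $K_{k,k}$ placed inside a class (together with the dense join to $V_2$ and to $Q$) already suffices to embed a full copy of $T^{odd}$, and conversely that $K_{k-1,k-1}$ cannot be enlarged in the construction. Establishing this ``$K_{k,k}$ is fatal'' lemma, which hinges on $\delta_T(A) = k$ via a greedy routing of the $A$-vertices of $T$ into one side of the $K_{k,k}$ and then inflating the tree edges into the prescribed odd cycles through the abundant neighbourhood inside $V_2$, is where I expect the technical heart of the argument to lie.
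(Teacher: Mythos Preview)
The paper does not prove this statement at all: Theorem~\ref{thm2.2} is quoted verbatim from Zhu and Chen~\cite{Zhu2023} (the paper even remarks that the cited result is stronger and only the simplified form is stated), and it is used in the paper solely as a black box in the proof of Theorem~\ref{thm2.4}(i). So there is no ``paper's own proof'' to compare against.

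That said, your sketch has a genuine gap in the $T^{odd}$-freeness half. The decomposition family is defined in one direction only: $M\in\mathcal{M}(T^{odd})$ means $T^{odd}\subseteq (M\cup E_t)\nabla E_t$. You are using the converse---that any embedding of $T^{odd}$ into $G$ must exhibit some $M\in\mathcal{M}(T^{odd})$ sitting inside your chosen ``$X$-side'' $Q\cup V_1$---and that simply does not follow from Definition~\ref{definition-1.1}. Concretely, an odd cycle of $T^{odd}$ may weave between $Q$, $V_1$, and $V_2$ in a way that does not respect any fixed $2$-colouring, so you cannot read off a member of $\mathcal{M}(T^{odd})$ from the restriction to $Q\cup V_1$. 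The subsequent step (``the residual subgraphs supported in $V(Q)$ are then members of $\mathcal{B}(T^{odd})$'') compounds this: $\mathcal{B}(T^{odd})$ consists of \emph{induced} subgraphs $M[S]$ for covers $S$ of size $<\gamma$, and you have not produced any such $M$ or $S$.

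Your plan for the upper bound is reasonable in outline (stability plus clean-up is indeed the engine in~\cite{Zhu2023}), but the step you flag as the ``technical heart''---that a $K_{k,k}$ inside one class is fatal while $K_{k-1,k-1}$ is safe---is exactly where the tree structure and the parameter $\delta_T(A)=k$ enter, and this requires a dedicated embedding lemma for odd-balloonings of trees that you have not supplied. Without it, neither direction of the argument stands.
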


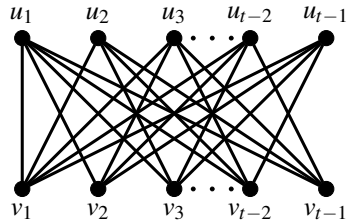
\begin{figure}[!h]
\setlength{\abovecaptionskip}{-12pt}  % 控制标题和图片之间的距离
\setlength{\belowcaptionskip}{0pt}  % 控制标题和下面内容之间的距离
\centering
\begin{tikzpicture}[x=1.00mm, y=1.00mm, inner xsep=0pt, inner ysep=0pt, outer xsep=0pt, outer ysep=0pt]
\path[line width=0mm] (77.43,57.82) rectangle +(6.00,35.15);
\definecolor{L}{rgb}{0,0,0}
\definecolor{F}{rgb}{0,0,0}
\node[circle,fill=F,draw,inner sep=0pt,minimum size=2mm] (u1) at (80,90) {};
\draw(80,93) node[anchor=center]{\fontsize{10}{8.65}\selectfont $u_1$};
\node[circle,fill=F,draw,inner sep=0pt,minimum size=2mm] (u2) at (90,90) {};
\draw(90,93) node[anchor=center]{\fontsize{10}{8.65}\selectfont $u_2$};
\node[circle,fill=F,draw,inner sep=0pt,minimum size=2mm] (u3) at (100,90) {};
\draw(100,93) node[anchor=center]{\fontsize{10}{8.65}\selectfont $u_3$};

\node[circle,fill=F,draw,inner sep=0pt,minimum size=0.5mm] () at (102.5,90) {};
\node[circle,fill=F,draw,inner sep=0pt,minimum size=0.5mm] () at (105,90) {};
\node[circle,fill=F,draw,inner sep=0pt,minimum size=0.5mm] () at (107.5,90) {};

\node[circle,fill=F,draw,inner sep=0pt,minimum size=2mm] (ut2) at (110,90) {};
\draw(110,93) node[anchor=center]{\fontsize{10}{8.65}\selectfont $u_{t-2}$};
\node[circle,fill=F,draw,inner sep=0pt,minimum size=2mm] (ut1) at (120,90) {};
\draw(120,93) node[anchor=center]{\fontsize{10}{8.65}\selectfont $u_{t-1}$};

\node[circle,fill=F,draw,inner sep=0pt,minimum size=2mm] (v1) at (80,70) {};
\draw(80,67) node[anchor=center]{\fontsize{10}{8.65}\selectfont $v_1$};
\node[circle,fill=F,draw,inner sep=0pt,minimum size=2mm] (v2) at (90,70) {};
\draw(90,67) node[anchor=center]{\fontsize{10}{8.65}\selectfont $v_2$};
\node[circle,fill=F,draw,inner sep=0pt,minimum size=2mm] (v3) at (100,70) {};
\draw(100,67) node[anchor=center]{\fontsize{10}{8.65}\selectfont $v_3$};

\node[circle,fill=F,draw,inner sep=0pt,minimum size=0.5mm] () at (102.5,70) {};
\node[circle,fill=F,draw,inner sep=0pt,minimum size=0.5mm] () at (105,70) {};
\node[circle,fill=F,draw,inner sep=0pt,minimum size=0.5mm] () at (107.5,70) {};

\node[circle,fill=F,draw,inner sep=0pt,minimum size=2mm] (vt2) at (110,70) {};
\draw(110,67) node[anchor=center]{\fontsize{10}{8.65}\selectfont $v_{t-2}$};
\node[circle,fill=F,draw,inner sep=0pt,minimum size=2mm] (vt1) at (120,70) {};
\draw(120,67) node[anchor=center]{\fontsize{10}{8.65}\selectfont $v_{t-1}$};

\path[line width=0.40mm, draw=L] (u1) -- (v1);
\path[line width=0.40mm, draw=L] (u2) -- (v1);
\path[line width=0.40mm, draw=L] (u3) -- (v1);
\path[line width=0.40mm, draw=L] (ut2) -- (v1);
\path[line width=0.40mm, draw=L] (ut1) -- (v1);

\path[line width=0.40mm, draw=L] (u1) -- (v2);
\path[line width=0.40mm, draw=L] (u3) -- (v2);
\path[line width=0.40mm, draw=L] (ut2) -- (v2);
\path[line width=0.40mm, draw=L] (ut1) -- (v2);

\path[line width=0.40mm, draw=L] (u1) -- (v3);
\path[line width=0.40mm, draw=L] (u2) -- (v3);
\path[line width=0.40mm, draw=L] (ut2) -- (v3);
\path[line width=0.40mm, draw=L] (ut1) -- (v3);

\path[line width=0.40mm, draw=L] (u1) -- (vt2);
\path[line width=0.40mm, draw=L] (u2) -- (vt2);
\path[line width=0.40mm, draw=L] (u3) -- (vt2);
\path[line width=0.40mm, draw=L] (ut1) -- (vt2);

\path[line width=0.40mm, draw=L] (u1) -- (vt1);
\path[line width=0.40mm, draw=L] (u2) -- (vt1);
\path[line width=0.40mm, draw=L] (u3) -- (vt1);
\path[line width=0.40mm, draw=L] (ut2) -- (vt1);
\end{tikzpicture}%
\caption{The graph $H_{t-1,t-1}$. }{\label{fig1}}
\end{figure}

Denote by $H_{t-1,t-1}$ the graph obtained from $K_{t-1,t-1}$
by removing  $(t-2)$ independent edges (see Figure \ref{fig1}).
For $t\geq s\geq 2$, let $G_{s,t}$ be the graph
obtained from $K_{s-1}\nabla T_2(n-s+1) $ by embedding
$H_{t-1,t-1}$ into one color class of $T_{2}(n-s+1)$.
Let $G_{3,3}$ be the graph obtained from $K_2\nabla T_2(n-2)$
by embedding a triangle into one color class of $T_2(n-2)$.
Let $H'(n,2,2)$ be the graph obtained from $K_1\nabla T_2(n-1)$
by embedding an edge into one color class of $T_2(n-1)$.
The extremal graphs with respect to $\mathrm{ex}(n,K_{s,t}^{odd})$ are as follows.

\begin{thm}\label{thm2.3}
Let $t\geq s\geq 2$ and $n$ be sufficiently large.

\vspace{1mm}
{\rm (i)}  \emph{(\cite{Zhu2020})} $\mathrm{EX}(n,K_{2,2}^{odd})=\{H'(n,2,2)\}$;

\vspace{1mm}
{\rm (ii)}  \emph{(\cite{Yan-2021})} $G_{3,3},G'_{3,3}\in\mathrm{EX}(n,K_{2,3}^{odd})$;

\vspace{1mm}
{\rm (iii)}  \emph{(\cite{Peng-2024})}
$G_{3,3},G'_{3,3}\in \mathrm{EX}(n,K_{3,3}^{odd})$, and
$\mathrm{EX}(n,K_{s,t}^{odd})=\{G_{s,t}\}$ for $t\geq 4$.
\end{thm}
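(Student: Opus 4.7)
The plan is to use a stability-based approach combined with a structural analysis of the edges living inside a single color class of a near-bipartite partition. First I would verify that $\chi(K_{s,t}^{odd})=3$ by exhibiting a proper $3$-coloring: color the two sides of the underlying $K_{s,t}$ with colors $1$ and $2$, then use color $3$ on the interior vertices of each odd path (which has length at least $4$, so the coloring extends). By the Erd\H{o}s-Stone-Simonovits theorem, $\mathrm{ex}(n,K_{s,t}^{odd})=(1+o(1))n^2/4$, and by the Erd\H{o}s-Simonovits stability theorem, any near-extremal $K_{s,t}^{odd}$-free graph $G$ admits a partition $V(G)=V_1\cup V_2$ with $\bigl||V_i|-n/2\bigr|=o(n)$ and $e(V_1)+e(V_2)=o(n^2)$.

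Next I would carry out the standard cleaning step: remove or re-attach vertices whose degree into the opposite class is far from $n/2$ (this costs nothing asymptotically), and isolate a small set $Q$ of ``universal-type'' vertices joined to essentially all the remaining vertices. This set $Q$ will play the role of an $(s-1)$-clique in case (iii), a single vertex in case (i), and a clique of size $s-1$ for (ii). After stripping $Q$ off and re-partitioning, the problem reduces to counting how many edges can remain inside the two color classes of the resulting bipartite skeleton without creating the forbidden ballooning.

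The core of the argument is a structural lemma bounding these interior edges. For (i), a single edge inside one class is optimal: two disjoint interior edges, combined with the dense bipartite connections, supply four edge-disjoint odd $5$-cycles sharing the four required corner vertices, producing a copy of $K_{2,2}^{odd}$. For (ii) and (iii), one must show that any bipartite graph embedded inside one color class which strictly contains $H_{t-1,t-1}$ already produces $K_{s,t}^{odd}$. The forbidden edges are precisely those of a near-perfect matching in $K_{t-1,t-1}$; re-introducing any one of them supplies the last odd cycle needed to close up the ballooning, while removing any edge of $H_{t-1,t-1}$ strictly lowers the edge count.

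The main obstacle is this structural lemma, because $H_{t-1,t-1}$ is a rather particular bipartite graph (namely $K_{t-1,t-1}$ minus a matching of size $t-2$) and does not enjoy any clean symmetry. Proving that $H_{t-1,t-1}$ is exactly the densest bipartite graph that can be embedded in a color class of the Tur\'{a}n skeleton without creating $K_{s,t}^{odd}$ needs a careful anchor-selection argument: one must choose $s$ vertices in one class (incorporating the $s-1$ vertices of $Q$) and $t$ vertices in the other, and show that the missing matching edges are exactly the obstruction preventing all $st$ odd cycles of $K_{s,t}^{odd}$ from being realized simultaneously. Finally, one has to verify the reverse direction, that the explicit graphs $H'(n,2,2)$, $G_{3,3}$, $G'_{3,3}$ and $G_{s,t}$ are themselves $K_{s,t}^{odd}$-free; this follows from a vertex-cover argument, using that $H_{t-1,t-1}$ has vertex cover number $t-1<t$ so that any attempted embedding of the $K_{s,t}$ skeleton cannot find $t$ independent anchor points on the embedded side.
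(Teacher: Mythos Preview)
The paper does not prove this theorem at all: Theorem~\ref{thm2.3} is stated with attributions to \cite{Zhu2020}, \cite{Yan-2021}, and \cite{Peng-2024}, and the paper simply quotes these results in order to use them later in the proof of Theorem~\ref{thm2.4}. So there is no ``paper's own proof'' to compare your proposal against; any complete proof would have to be reconstructed from the cited sources.

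That said, your outline is in the right spirit---stability plus a cleaning step plus a structural analysis of the edges inside one color class is indeed the template used in the cited works---but two points deserve correction. First, your proposed $3$-coloring is wrong as written: you cannot put colour~$3$ on \emph{all} interior vertices of an odd cycle of length~$\ge 5$, since consecutive interior vertices are adjacent. The conclusion $\chi(K_{s,t}^{odd})=3$ is of course correct, but the colouring has to alternate along each replacement path. Second, and more seriously, the heart of the matter is the exact determination of which bipartite graph can be embedded in a colour class without creating $K_{s,t}^{odd}$, and your sketch hand-waves this. You assert that $H_{t-1,t-1}$ is optimal and that re-inserting any missing matching edge ``supplies the last odd cycle needed,'' but this requires a delicate case analysis (carried out in \cite{Peng-2024}) that simultaneously tracks the $s-1$ universal vertices, the embedded bipartite graph, and the parity constraints on all $st$ cycles; it is not a one-line observation. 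In particular the non-uniqueness for $(s,t)\in\{(2,3),(3,3)\}$---where both a $P_4$ and a triangle work---already shows that the optimal interior configuration is not always $H_{t-1,t-1}$, so your structural lemma as stated would be false in those cases.
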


\begin{thm}\label{thm2.4} Let $H^{odd}$ be the odd-ballooning of $H$. Then, $\mathcal{M}(H^{odd})$ is matching-good if:

\vspace{1mm}
{\rm (i)}  $H$ is a tree, or

\vspace{1mm}
{\rm (ii)}  $H$ is isomorphic to $K_{s,t}$, where $t\geq s\geq 2$.
\end{thm}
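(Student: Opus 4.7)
The plan is to verify both conditions of Definition~\ref{definition-1.2} for $\mathcal{H}=\{H^{odd}\}$, in the two cases $H$ a tree and $H=K_{s,t}$. Since every odd cycle has chromatic number three and $H^{odd}$ contains such cycles, $\chi(H^{odd})=3$ and hence $p(H^{odd})=2$, so condition~(i) reduces to exhibiting a matching in $\mathcal{M}(H^{odd})$. I would fix a bipartition $V(H)=A\cup B$ and embed $H^{odd}$ into $(M\cup E_t)\nabla E_t$ as follows: place the vertices of $A$ on the right-hand join side (which is edgeless), and place $B$ together with all interior vertices of the odd cycles on the left. Because the right side is independent, every odd cycle must contribute at least one edge entirely on the left; by choosing this ``bad edge'' among the interior vertices of its cycle (which has length at least five, hence at least two interior vertices), bad edges from distinct cycles are vertex-disjoint and together form a matching of size $|E(H)|$. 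Thus $H^{odd}$ embeds into $(M_{|E(H)|}\cup E_t)\nabla E_t$, so a matching lies in $\mathcal{M}(H^{odd})$.

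For condition~(ii) when $H$ is a tree, I would invoke Theorem~\ref{thm2.2} directly. It asserts that $\mathcal{H}(n,2,\gamma,\mathcal{B}(T^{odd}),K_{k-1,k-1})\subseteq\mathrm{EX}(n,T^{odd})$, where $\gamma=\gamma(\mathcal{M}(T^{odd}))$ and $k=\delta_T(A)$. Pick any $Q\in\mathrm{EX}(\gamma-1,\mathcal{B}(T^{odd}))$ and set $G^{\star}=Q\nabla H_2$ where $H_2$ is $T_2(n-\gamma+1)$ with a copy of $K_{k-1,k-1}$ embedded in one colour class. Then $H_1:=Q$ lies in $\mathrm{EX}(\gamma-1,\mathcal{B}(T^{odd}))$ by construction, while $H_2$ differs from $T_2(n-\gamma+1)$ by exactly $(k-1)^2=O(1)$ edges, which is precisely the decomposition demanded by Definition~\ref{definition-1.2}(ii).

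For condition~(ii) when $H=K_{s,t}$, I would extract the extremal graphs from Theorem~\ref{thm2.3}. In each sub-case they take the form $K_{s-1}\nabla(T_2(n-s+1)+F)$ for some fixed graph $F$ with $O(1)$ edges (an edge for $(s,t)=(2,2)$, a triangle for the $(2,3)$ and $(3,3)$ cases, and $H_{t-1,t-1}$ once $t\geq 4$). Setting $H_1=K_{s-1}$ and $H_2=T_2(n-s+1)+F$ yields the desired decomposition, provided one first shows $\beta(\mathcal{M}(K_{s,t}^{odd}))=\gamma(\mathcal{M}(K_{s,t}^{odd}))=s$, which forces $\mathcal{B}(K_{s,t}^{odd})=\{K_s\}$ and therefore $\mathrm{EX}(s-1,\{K_s\})=\{K_{s-1}\}$. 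Establishing this value of $\gamma$ is the main obstacle: the upper bound follows by tailoring the embedding above so that the bad edges are independently covered by the $s$ vertices of the $A$-side of $K_{s,t}$, while the lower bound requires showing that in any valid embedding at least $s$ left-side vertices are needed to independently cover the inevitable bad edges, via a case analysis on how the two parts of $K_{s,t}$ distribute across the $L/R$ partition.
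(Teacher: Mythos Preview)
Your overall plan mirrors the paper's: verify Definition~\ref{definition-1.2}(ii) by reading off a suitably structured extremal graph from Theorem~\ref{thm2.2} (trees) and Theorem~\ref{thm2.3} ($K_{s,t}$), after first checking that $\mathcal{M}(H^{odd})$ contains a matching and, for $K_{s,t}$, that $\beta=\gamma=s$. The paper does not argue those last two facts but simply cites Lemma~1 of \cite{Zhu2023} and Lemma~1 of \cite{Peng-2024}, so your attempt to prove them from scratch is the only substantive deviation.

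However, your direct argument that $\mathcal{M}(H^{odd})$ contains a matching has a genuine gap. If you place $A$ on the right and $B$ together with \emph{all} interior vertices on the left, then in each odd cycle $C_\ell$ only its single $A$-vertex sits on the right, and the remaining $\ell-1$ vertices form a path of length $\ell-2\geq 3$ lying entirely on the left. The left-hand side therefore contains several consecutive edges from every cycle (and these paths are even glued together at the $B$-vertices), so it is nowhere near a matching; with this placement you only exhibit a forest in the decomposition family, and your sentence ``by choosing this bad edge among the interior vertices'' does not follow from the embedding you described. The fix is to alternate the interior vertices of each cycle between the two sides: with $a\in A$ on the right, its two cycle-neighbors $b$ and $v_{\ell-2}$ are forced left; now put $v_1$ right, $v_2$ left, and so on through $v_{\ell-3}$. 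The unique monochromatic edge of that cycle is then $v_{\ell-3}v_{\ell-2}$ on the left, between two interior vertices, and since interior vertices of distinct cycles are disjoint these edges together form the desired matching. Your sketch for $\gamma=\beta=s$ in the $K_{s,t}$ case is plausible but thin; the paper sidesteps it by citation, and a self-contained proof would need the lower bound on $\beta$ spelled out carefully (every $(s-1)$-set of vertices leaves some odd cycle intact).
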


\subsubsection{Edge blow-up of graphs}

Given a graph $H$ and an integer $p\geq 2$, the edge blow-up of $H$, denoted by $H^{p+1}$,
is the graph obtained from replacing each edge in $H$ with a $K_{p+1}$ where the new vertices of $K_{p+1}$ are all distinct.
%Define $f(\nu,\Delta)=\max\{e(G)~|~\nu(G)\leq \nu, \Delta(G)\leq \Delta\}$.
%In 1972, Abbott, Hanson and Sauer \cite{AHS1972} determined the value of $f(\lambda-1,\lambda-1)$.
%Let $\mathcal{D}_{\lambda-1}$ denote the family of extremal graphs for $f(\lambda-1,\lambda-1)$
%that contain no isolated vertices.
We define $\mathcal{H}(n,p,q,\mathcal{B})$ as the family of graphs of the form $Q\nabla T_{p}(n-q+1)$,
where $Q\in {\rm EX}(q-1,\mathcal{B})$.
In particular, $\mathcal{H}(n,p,1,\mathcal{B})=\{T_{p}(n)\}$.
The following result is due to Yuan.

\begin{thm}\label{thm1.11C}\emph{(\cite{Y2022})}
Let $H$ be a graph and $p\geq \chi(H)+1$.
For sufficiently large $n$, if $H$ is either non-bipartite,
or bipartite with $\gamma\big(\mathcal{M}(H^{p+1})\big)<\gamma(H)$,
then
$${\rm EX}(n,H^{p+1})
=\mathcal{H}\big(n,p,\gamma\big(\mathcal{M}(H^{p+1})\big),\mathcal{B}\big(\mathcal{M}(H^{p+1})\big)\big).$$
\end{thm}

Let $T$ be a tree with partite classes $A$ and $B$ such that $|A|\leq |B|$.
We define $\delta_T(A)=\min\{d_T(v):v\in A\}$.
The extremal graphs corresponding to \({\rm ex}(n,T^{p+1})\) are characterized as follows.
When \(\delta_{T}(A)\geq 2\), Wang, Hou, Liu, and Ma~\cite[Theorem 1.1]{Wang2021} obtained a full characterization of the extremal graph family \({\rm EX}(n,T^{p+1})\);
for brevity, we only state the following simplified version of their result herein.

\begin{thm}\label{thm1.12C}
Let $p\geq 3$ be an integer and T be a tree.
Let $A$ and $B$ be the partite classes of $T$ with $|A|\leq |B|$.
For sufficiently large $n$, the following holds:

\vspace{1mm}
{\rm (i)}  \emph{(\cite{L2013})} if $\delta_{T}(A)=1$ and $\beta(T)= |A|$, then $\mathrm{EX}(n,T^{p+1})=\{H(n,p,|A|)\}$;

\vspace{1mm}
{\rm (ii)}  \emph{(\cite{Chi2023})} if $\delta_{T}(A)=1$ and $\beta(T)<|A|$,
then $\mathrm{EX}(n,\!T^{p+1})
\!=\!\mathcal{H}\big(n,\!p,\!\gamma(\mathcal{M}(T^{p+1})),\!\mathcal{B}(\mathcal{M}(T^{p+1}))\big)$;

%\vspace{1mm}
%{\rm (iii)}   \emph{(\cite{L2013})} if $\delta_{T}(A)=2$, then any graph in $\mathrm{EX}(n,T^{p+1})$
%can be obtained from $H(n,p,|A|)$ by adding one edge in $T_p(n-|A|+1)$;

\vspace{1mm}
{\rm (iii)} \emph{(\cite{L2013,Wang2021})} if $\delta_{T}(A)\geq 2$, then
every graph in $\mathrm{EX}(n,T^{p+1})$ can be obtained from $H(n,p,|A|)$ by adding $\alpha_T$ suitable edges within $T_p(n-|A|+1)$,
where $\alpha_T$ is a constant depending on $T$.
\end{thm}

For other extremal results concerning the edge blow-up of specific graphs, we refer the interested readers to \cite{L2013,M1968,Simonovits1966}.
Up to now, the spectral-consistency of $H^{p+1}$ has been established for several classes of graphs $H$,
including stars \cite{CFTZ2020,DKL2022},
matchings \cite{NWK2023}, and star forests \cite{WANG-2024}.
In this paper, we establish the following spectral-consistency result for the edge blow-up of graphs.

\begin{thm}\label{thm1.13C}
Let \(H^{p+1}\) denote the edge blow-up of \(H\) with \(p\geq \chi(H)+1\).
Then \(\mathcal{M}(H^{p+1})\) is matching-good provided that \(H\) belongs to one of the following families:

\vspace{1mm}
{\rm (i)}  non-bipartite graphs;

\vspace{1mm}
{\rm (ii)} bipartite graphs satisfying $\gamma(\mathcal{M}(H^{p+1}))<\gamma(H)$;

\vspace{1mm}
{\rm (iii)} trees;

\vspace{1mm}
{\rm (iv)} even cycles.
\end{thm}

Parts (i) and (ii) of Theorem \ref{thm1.13C} strengthen a result from \cite{Fang2026}
 by weakening the requirement \(p\geq c_H\) to \(p\geq \chi(H)+1\),
where $c_H$ is a constant strictly greater than $\chi(H)+1$ that depends only on \(H\).

The remainder of this paper is organized as follows.
Section \ref{section2} collects the technical lemmas required for our analysis.
Section \ref{section3} is dedicated to the proof of our main result, Theorem \ref{thm1.1}.
We then show that the decomposition families of four classes of graphs are matching-good,
thereby proving Theorem \ref{thm1.9A} in Section \ref{section4},
and Theorems \ref{TH1.6}, \ref{thm2.4}, and \ref{thm1.13C} in Section \ref{section5}.
Finally, we conclude with several remarks and a discussion of related problems in the final section.

\section{Preliminaries}\label{section2}

In this section, we introduce several lemmas that will be used later to prove our theorems.
Let $\nu(G)$ be the maximum size of a matching in a graph $G$.
The following lemma is well-known.

\begin{lem}\label{LEMMA2.1}\emph{(\cite{Konig-1931})}
Let $G$ be a bipartite graph. Then, $\beta(G)=\nu(G)$.
\end{lem}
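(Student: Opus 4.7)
The plan is to prove the two inequalities $\nu(G)\le \beta(G)$ and $\beta(G)\le \nu(G)$ separately, where the first is a one-line counting argument and the second is the constructive heart of the classical König--Egerváry argument via alternating paths.

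First, I would dispose of the easy direction. Let $M$ be any matching and $C$ any vertex covering of $G$. Since each edge $e\in M$ must be incident to at least one vertex of $C$, and distinct edges of a matching share no endpoint, the map $e\mapsto $ (any covering endpoint of $e$) is injective into $C$. Taking $M$ of maximum size and $C$ of minimum size yields $\nu(G)\le \beta(G)$. This half does not use bipartiteness.

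For the nontrivial direction $\beta(G)\le \nu(G)$, I would exhibit an explicit covering of size $\nu(G)$. Write the bipartition $V(G)=A\cup B$, fix a maximum matching $M$, and let $U\subseteq A$ be the set of $A$-vertices unmatched by $M$. Define $Z$ to be the set of vertices reachable from $U$ by $M$-alternating paths (paths whose edges alternate between $E(G)\setminus M$ and $M$, starting from $U$). Set
\[
S=(A\setminus Z)\cup (B\cap Z).
\]
The plan is to show (a) $S$ is a covering of $G$, and (b) $|S|=|M|$. For (a), suppose an edge $ab$ with $a\in A$, $b\in B$ is uncovered; then $a\in Z$ and $b\notin Z$. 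If $ab\in M$, then $b$ would be reachable from $U$ by extending any alternating path to $a$ along $ab$, contradicting $b\notin Z$. If $ab\notin M$, then since $a\in Z\setminus U$ (vertices in $U$ are unmatched, and any edge from $U$ is not in $M$ so would put $b$ in $Z$), the alternating path reaching $a$ must arrive via a matching edge, and extending it along $ab$ again forces $b\in Z$, a contradiction. For (b), I would argue that every vertex of $A\setminus Z$ is matched by $M$ (since $U\subseteq Z$) and every vertex of $B\cap Z$ is matched by $M$ (otherwise an alternating path to such a vertex would be $M$-augmenting, contradicting maximality of $M$); moreover, the $M$-partner of a vertex in $A\setminus Z$ lies in $B\setminus Z$, and the $M$-partner of a vertex in $B\cap Z$ lies in $A\cap Z$. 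Thus $M$ pairs $S$ bijectively with $V(G)\setminus S$ restricted to matched vertices, and in particular $|S|\le |M|$; combined with the easy direction, $|S|=|M|=\nu(G)$.

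The main obstacle is the covering verification in step (a): one must carefully distinguish the cases of a putative uncovered edge being a matching edge or not, and show that in both cases the bipartite structure forces membership of $b$ in $Z$, yielding the contradiction. The maximality of $M$ enters only to ensure that no vertex in $B\cap Z$ is unmatched, which rules out the existence of an $M$-augmenting path and simultaneously guarantees the clean pairing used in the cardinality count.
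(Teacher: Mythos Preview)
The paper does not actually prove this lemma; it merely cites K\"onig's 1931 paper and calls the result well-known. So there is no approach to compare against, and your argument stands on its own.

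Your proof is the standard alternating-path construction and is essentially correct. One small repair is needed in part (a): in the case $ab\in M$ with $a\in Z$, you cannot ``extend the alternating path to $a$ along $ab$'', because any alternating path reaching $a\in A$ from $U$ has positive even length (as $a\notin U$ when $a$ is matched) and therefore already ends with a matching edge into $a$; appending another matching edge would violate alternation. The fix is immediate: since $a$ has a unique $M$-partner, the matching edge ending that path is precisely $ba$, so $b$ is the penultimate vertex of the path and hence $b\in Z$ already. With this adjustment the covering verification and the cardinality count both go through, and the argument is complete.
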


For a graph family $\mathcal{H}$ with $p(\mathcal{H})=p\geq 2$,
let $q(\mathcal{H})$ be the minimum integer $q$
such that $H\subseteq E_q\nabla T_{p}(p|H|)$ for some $H\in \mathcal{H}$.
The following elegant result was given by Zhang \cite{Zhang2025+}.

\begin{lem}\label{LEMMA2.2}\emph{(\cite{Zhang2025+})}
Let $\mathcal{H}$ be a finite family
with $p(\mathcal{H})=p\geq 2$, $q(\mathcal{H})=q$, and
$\max_{H\in \mathcal{H}}|H|=t$.
Assume that $H_0\subseteq M_t \nabla T_{p-1}(t(p-1))$ for some $H_0\in \mathcal{H}$.
Then, for sufficiently large $n$,
there exists a constant $\alpha$ such that every graph $G\in{\rm SPEX}(n,\mathcal{H})$
has a partition $V(G)=W\cup (\cup_{i\in [p]}V_i)$ satisfying the following property:

\vspace{1mm}
{\rm (i)}  $|W|=q-1$, and $|V_i|\in \{\lfloor\frac{n-q+1}{p}\rfloor, \lceil\frac{n-q+1}{p}\rceil\}$ for each $i\in [p]$;

\vspace{1mm}
{\rm (ii)}  For each $i\in [p]$, there exists a subset $U_i\subseteq V_i$
such that $|V_i\setminus U_i|\leq \alpha$ and $N_G(v)=V(G)\setminus V_i$ for all $v\in U_i$.
\end{lem}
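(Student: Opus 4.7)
\textbf{Plan for Lemma \ref{LEMMA2.2}.} The plan is to follow the now-standard route of spectral-stability-plus-symmetrization, adapted to the hypothesis on the decomposition family. First I would fix the benchmark graph $G^{\ast}=K_{q-1}\nabla T_{p}(n-q+1)$. Since $q(\mathcal{H})=q$, no $H\in\mathcal{H}$ embeds into $E_{q-1}\nabla T_{p}(p|H|)$, and a short check shows $G^{\ast}$ is $\mathcal{H}$-free, so any $G\in\mathrm{SPEX}(n,\mathcal{H})$ satisfies $\rho(G)\geq \rho(G^{\ast})=(1-\tfrac{1}{p})n+\Theta(1)$. Let $\mathbf{x}$ denote a Perron eigenvector of $G$ with $\max_v x_v=1$, attained at some vertex $z$, and set $\rho:=\rho(G)$.

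Next I would invoke a spectral version of the Erd\H{o}s--Simonovits stability theorem (in the spirit of Nikiforov): combined with the lower bound on $\rho(G)$ and the fact that $\chi(H)\geq p+1$ for every $H\in\mathcal{H}$, one gets a partition $V(G)=V_1\cup\cdots\cup V_p$ with $e(V_i)=o(n^2)$ for each $i$ and $\sum_i|V_i|=n$, and with the part-sizes each of order $n/p+o(n)$. Using the eigen-equation $\rho\, x_v=\sum_{u\sim v}x_u$ I would show that vertices of Perron weight close to $1$ form the bulk of each $V_i$: for a small constant $\eta$, let $L_i=\{v\in V_i: x_v\geq 1-\eta\}$ and $W^{\circ}=\{v: x_v\geq 1-\eta,\; v\text{ is adjacent to } \geq (1-\eta)n\text{ vertices}\}$. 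A double-counting argument against $\rho z$ forces $|V_i\setminus L_i|=O(1)$ and $\sum_{v\notin L_i\text{ for any }i}x_v=O(1)$.

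Having identified the "bulk" vertices, I would run the standard symmetrization/cleaning step. For each $v\in L_i$ that misses an edge to some $u\in L_j$ with $j\neq i$, adding $uv$ strictly increases $\rho$ unless the resulting graph contains some $H\in\mathcal{H}$. If too many such missing edges occurred, one could greedily build an embedding of a copy of $M_t\nabla T_{p-1}(t(p-1))$ inside $G+uv$ using the bipartite matching structure in $\mathcal{M}(\mathcal{H})$ from the hypothesis $H_0\subseteq M_t\nabla T_{p-1}(t(p-1))$, contradicting extremality. Iterating this local replacement shows that all but $O(1)$ vertices in each $V_i$ are adjacent to everything outside $V_i$; this produces the sets $U_i$ and the constant $\alpha$ in (ii). The same argument also balances the part sizes: if $||V_i|-|V_j||\geq 2$, one can move a vertex of $U_i$ into $V_j$ and strictly increase $\rho$ via a Kelmans-type transformation, yielding $|V_i|\in\{\lfloor (n-q+1)/p\rfloor,\lceil(n-q+1)/p\rceil\}$.

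Finally I would locate the apex set $W$. Collect all vertices $w$ whose neighborhood in the Perron-heavy region is essentially all of $V(G)\setminus\{w\}$; by the previous step, each such $w$ is adjacent to every vertex of $\bigcup_i U_i$. The upper bound $|W|\leq q-1$ is the critical use of $q(\mathcal{H})=q$: if $|W|\geq q$, then picking any $H\in\mathcal{H}$ with $|H|=t$ and enough vertices in each $U_i$ embeds $H$ into $E_q\nabla T_p(pt)\subseteq G$, a contradiction. The matching lower bound $|W|\geq q-1$ follows by a spectral comparison with $G^{\ast}$: if $|W|<q-1$, a direct eigenvalue estimate (using the benchmark $\rho(G^{\ast})$) shows $\rho(G)<\rho(G^{\ast})$, contradicting extremality. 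Setting $V_i:=V_i\setminus W$ and rebalancing by $O(1)$ vertices finishes (i).

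The main obstacle will be the cleaning step: one must simultaneously control the spectral change under edge additions/swaps and exploit the hypothesis $H_0\subseteq M_t\nabla T_{p-1}(t(p-1))$ to exhibit a forbidden subgraph whenever a bulk vertex has an atypical neighborhood. The delicate point is that the matching $M_t$ has $t$ edges rather than a single edge, so the stability embedding has to be carried out with enough independent pieces spread across the $U_i$'s while staying inside what was already present in $G$; this is where the constant $\alpha$ is calibrated and where the balanced-parts claim interacts nontrivially with the apex identification.
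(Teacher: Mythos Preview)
The paper does not prove Lemma~\ref{LEMMA2.2}; it is quoted verbatim from Zhang~\cite{Zhang2025+} without argument, and is then used as a black box to derive Lemma~\ref{LEMMA2.3}. So there is no proof in this paper to compare your proposal against.

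That said, your outline follows the standard spectral-stability-plus-cleaning template that Zhang's preprint (and its predecessors, e.g.\ Wang--Kang--Xue~\cite{WANG-2023} and Cioab\u{a}--Desai--Tait~\cite{Cioaba2}) employs, and the architecture is broadly correct. Two places where real work is hiding and your sketch is thin: (a) in the cleaning step you must actually embed some $H\in\mathcal{H}$, not just $M_t\nabla T_{p-1}(t(p-1))$, into the perturbed graph---the matching hypothesis tells you $H_0$ sits inside that host, but you need the matching edges to land inside a single $V_i$ with the remaining classes supplying the $T_{p-1}$ part, and this requires first controlling $e(V_i)$ and the degrees inside $V_i$ more tightly than ``$o(n^2)$''; and (b) the lower bound $|W|\geq q-1$ via a bare comparison $\rho(G)<\rho(G^{\ast})$ is not immediate, since with $|W|=q-2$ the graph $G$ could in principle carry extra edges inside the $V_i$'s that compensate spectrally---one typically needs an eigenvector-weighted edge count or a more refined Rayleigh-quotient estimate here. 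Neither is a fatal gap, but both are where the constants are actually pinned down.
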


Lemma \ref{LEMMA2.2} implies that there exists a constant number of vertices in $G$ whose removal results in a complete $(p+1)$-partite graph.
For the convenience of proving Theorem \ref{thm1.1},
we now restate the above lemma from the perspective of the decomposition family as follows.

\begin{lem}\label{LEMMA2.3}
Let $\mathcal{H}$ be a finite graph family
such that $p(\mathcal{H})=p\geq 2$
and $\mathcal{M}(\mathcal{H})$ contains a matching.
Then, for sufficiently large $n$,
there exists a positive constant $\varphi=\varphi(\mathcal{H})$ such that every graph $G\in{\rm SPEX}(n,\mathcal{H})$
has a partition $V(G)=W\cup (\cup_{i\in [p]}V_i)$ satisfying:

\vspace{1mm}
{\rm (i)} $|W|=\gamma-1$, and $|V_i|\in \{\lfloor\frac{n-\gamma+1}{p}\rfloor, \lceil\frac{n-\gamma+1}{p}\rceil\}$ for each $i\in [p]$,
where $\gamma=\gamma(\mathcal{M}(\mathcal{H}))$;

\vspace{1mm}
{\rm (ii)} For each $i\in [p]$, there exists a subset $V_i'\subseteq V_i$
such that $|V_i\setminus V_i'|= \varphi$ and $N_G(v)=V(G)\setminus V_i$ for all $v\in V_i'$.
\end{lem}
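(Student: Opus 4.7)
The plan is to deduce Lemma~\ref{LEMMA2.3} as a reformulation of Lemma~\ref{LEMMA2.2}, once one (i) verifies its hypothesis, (ii) establishes the parameter identity $q(\mathcal{H})=\gamma(\mathcal{M}(\mathcal{H}))$, and (iii) shrinks the sets $U_i$ it produces to the exact co-size $\varphi$. For step (i), since $\mathcal{M}(\mathcal{H})$ contains some matching $M_k$, Definition~\ref{definition-1.1} provides $H_0\in\mathcal{H}$ and a constant $t_0$ with $H_0\subseteq(M_k\cup E_{t_0})\nabla T_{p-1}((p-1)t_0)$. Choosing $t$ large enough makes $M_k\cup E_{t_0}$ a subgraph of $M_t$ and $T_{p-1}((p-1)t_0)$ a subgraph of $T_{p-1}((p-1)t)$, so $H_0\subseteq M_t\nabla T_{p-1}(t(p-1))$, which is precisely the hypothesis of Lemma~\ref{LEMMA2.2}.

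For step (ii), I prove both directions. For $q(\mathcal{H})\leq\gamma$, pick a bipartite $M\in\mathcal{M}(\mathcal{H})$ with a minimum independent cover $S$ of size $\gamma$; since $S$ is independent and covers every edge, $V(M)\setminus S$ is also independent, so $M\subseteq E_\gamma\nabla E_{|M|-\gamma}$. Plugging this into an embedding $H\subseteq(M\cup E_t)\nabla T_{p-1}((p-1)t)$ and absorbing $V(M)\setminus S$, the $E_t$ part, and the $T_{p-1}$ factor together into a single balanced $T_p$ yields $H\subseteq E_\gamma\nabla T_p(p|H|)$. For the reverse inequality, fix $H\in\mathcal{H}$ realizing $q(\mathcal{H})=q$ with $H\subseteq E_q\nabla T_p(p|H|)$, and write $V(H)=A_0\cup A_1\cup\cdots\cup A_p$ accordingly, so that $|A_0|\leq q$ and each $A_i$ is independent. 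The bipartite graph $M_0:=H[A_0\cup A_1]$ admits $A_0$ as an independent cover and satisfies $H\subseteq(M_0\cup E_t)\nabla T_{p-1}((p-1)t)$ for $t\geq |H|$; by the minimality clause in Definition~\ref{definition-1.1}, some $M\in\mathcal{M}(\mathcal{H})$ lies inside $M_0$, and $A_0\cap V(M)$ is then an independent cover of $M$ of size at most $q$, giving $\gamma(\mathcal{M}(\mathcal{H}))\leq\gamma(M)\leq q$.

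Finally, applying Lemma~\ref{LEMMA2.2} with the identified value $q=\gamma$ supplies, for each $G\in\mathrm{SPEX}(n,\mathcal{H})$, a constant $\alpha=\alpha(\mathcal{H})$, a partition $V(G)=W\cup(V_1\cup\cdots\cup V_p)$ fulfilling clause~(i) of Lemma~\ref{LEMMA2.3}, and sets $U_i\subseteq V_i$ with $|V_i\setminus U_i|\leq\alpha$ and $N_G(v)=V(G)\setminus V_i$ for each $v\in U_i$. Setting $\varphi:=\max\{\alpha,1\}$ and picking any $V_i'\subseteq U_i$ with $|V_i\setminus V_i'|=\varphi$ (possible because $|V_i|\gg\alpha$ for large $n$, as $|U_i|\geq|V_i|-\alpha$) delivers clause~(ii). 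The main obstacle, and the only conceptually new piece, is the parameter identification in step (ii); once it is in hand, steps~(i) and~(iii) are pure bookkeeping with constants.
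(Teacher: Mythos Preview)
Your proposal is correct and follows essentially the same route as the paper: verify the matching hypothesis of Lemma~\ref{LEMMA2.2}, prove the identity $q(\mathcal{H})=\gamma(\mathcal{M}(\mathcal{H}))$ via the two inequalities, then apply Lemma~\ref{LEMMA2.2} and shrink the sets $U_i$ to an exact co-size. The only cosmetic difference is that the paper takes $\varphi=\alpha+1$ whereas you take $\varphi=\max\{\alpha,1\}$; both choices work.
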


\begin{proof}
We first prove that $\gamma(\mathcal{M}(\mathcal{H}))=q(\mathcal{H})$.
By the definition of $\gamma(\mathcal{M}(\mathcal{H}))$,
there exists a member $H'\in \mathcal{H}$ and a bipartite member $M'\in \mathcal{M}(H')$ such that $\gamma(M')=\gamma$.
Thus, $M'\subseteq K_{\gamma,t}$,
where $t=\max_{H\in \mathcal{H}}|H|$.
Combining this with the definition of $\mathcal{M}(H')$,
we observe that $H'\subseteq K_{\gamma,t}\nabla T_{p-1}(t(p-1))=E_{\gamma}\nabla T_{p}(tp)$.
Hence, $\gamma(\mathcal{M}(\mathcal{H}))=\gamma\geq q(\mathcal{H})$.

Conversely, based on the definition of $q(\mathcal{H})$,
there exists a member $H\in \mathcal{H}$ such that $H\subseteq E_q\nabla T_{p}(tp)$.
By choosing the first two partite sets, we know that the decomposition family $\mathcal{M}(H)$ of $H$
contains a member $M$ such that $M\subseteq K_{q,t}$.
Therefore, $\gamma(\mathcal{M}(\mathcal{H}))\leq\gamma(M)\leq q=q(\mathcal{H}).$

Clearly, $\mathcal{M}(\mathcal{H})$ contains a matching if and only if
there exists some $H_0\in \mathcal{H}$ such that $H_0\subseteq M_t \nabla T_{p-1}(t(p-1))$.
Now, by Lemma \ref{LEMMA2.2}, statement (i) holds,
and for any $i\in [p]$, there exists $U_i\subseteq V_i$
such that $|V_i\setminus U_i|\leq \alpha$ and $N_G(v)=V(G)\setminus V_i$ for each $v\in U_i$.
Set $\varphi=\alpha+1$.
For each $i\in [p]$,
since $U_i\subseteq V_i$ and $|V_i\setminus U_i|\leq \alpha$,
there exists a subset $V_i'\subseteq U_i$ such that $|V_i\setminus V_i'|= \alpha+1=\varphi$.
Since $V_i'\subseteq U_i$,
we also have $N_G(v)=V(G)\setminus V_i$ for any $v\in V_i'$,
thus completing the proof.
\end{proof}

The following special family of graphs was introduced by Simonovits:

\begin{definition}\label{def2.1AB}\emph{(\cite{Simonovits1974})}
Denote by $\mathcal{D}(n,p,\varphi)$ the family of $n$-vertex graphs $G$ satisfying the
following symmetry condition:

\vspace{1mm}
{\rm (i)} It is possible to remove at most $\varphi$ vertices from $G$, resulting in a graph
$G'=\nabla_{i=1}^{p}G_i$,
where $\big||G_i|-\frac{n}{p}\big|\leq \varphi$ for each $i\in [p]$;

\vspace{1mm}
{\rm (ii)} For each $i\in [p]$, there exists a connected graph $F_i$ such that $G_i=k_iF_i$,
where $k_i=|G_i|/|F_i|$ and $|F_i|\leq \varphi$.
Moreover, any two copies $F_i',F_i''$ of $F_i$ in $G_i$ are symmetric subgraphs of $G$:
there exists an isomorphism $\phi: V(F_i')\mapsto V(F_i'')$ such that for any $u\in V(F_i')$
and $v\in V(G)\setminus V(G')$, $uv\in E(G)$ if and only if $\phi(u)v\in E(G)$.
\end{definition}

To prove Theorem \ref{thm1.9A}, we rely on a result established by Simonovits \cite{Simonovits1974}.

\begin{lem}\emph{(\cite[Theorem 1]{Simonovits1974})}\label{LEMMA2.4}
Let $\mathcal{\mathcal{H}}$ be a finite graph family
such that $p(\mathcal{H})=p\geq 2$.
If $\mathcal{M}(\mathcal{H})$ contains a linear forest,
then there exist $\varphi=\varphi(\mathcal{H})$ and $n_0=n_0(\varphi)$ such that
$\mathcal{D}(n,p,\varphi)$ contains a graph $G\in \mathrm{EX}(n,\mathcal{H})$ for $n\geq n_0$.
Furthermore, if $G$ is the only extremal graph in $\mathcal{D}(n,p,\varphi)$,
then it is the unique graph in $\mathrm{EX}(n,\mathcal{H})$ for every sufficiently large $n$.
\end{lem}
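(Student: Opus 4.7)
The plan is to combine the Erd\H{o}s--Simonovits stability theorem with a progressive-induction symmetrisation that exploits the linear forest $L \in \mathcal{M}(\mathcal{H})$. Fix $H \in \mathcal{H}$ and a constant $t=t(\mathcal{H})$ with $H \subseteq (L \cup E_t) \nabla T_{p-1}((p-1)t)$, and let $G$ be any graph in $\mathrm{EX}(n,\mathcal{H})$ for sufficiently large $n$. Since $p(\mathcal{H}) = p$, the Erd\H{o}s--Stone--Simonovits theorem gives $e(G) = e(T_p(n)) + o(n^2)$, and stability yields a balanced partition $V(G) = V_1 \cup \cdots \cup V_p$ under which all but $o(n^2)$ edges are cross edges.

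Next I would upgrade this approximate partition to a bounded-defect one. Call a vertex $v \in V_i$ \emph{exceptional} if it either misses at least $\varepsilon n$ vertices in some other class, or is incident to unusually many edges inside $V_i$. A supersaturation and common-neighbour count shows that if more than a constant number $\varphi_0 = \varphi_0(\mathcal{H})$ of exceptional vertices accumulate in some class, then $V_i$ contains a copy of $L$ together with $t$ extra independent vertices all sharing many common neighbours outside $V_i$; choosing these common neighbours to form a $T_{p-1}((p-1)t)$ embeds $H$ into $G$, contradicting $\mathcal{H}$-freeness. Removing the $\le p\varphi_0$ exceptional vertices thus leaves a complete $p$-partite graph with near-equal parts, and the same argument bounds the order of every connected component of the induced graph on each $V_i$ by a constant.

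The main obstacle is then to sharpen this skeleton into the strong symmetry demanded by Definition~\ref{def2.1AB}(ii), namely that each class decomposes as $k_i F_i$ for a single connected graph $F_i$ of bounded order, and that any two copies of $F_i$ interact identically with the complement. I would handle this by a swap argument: if two components $F', F''$ of the same isomorphism type within $V_i$ had different external adjacency patterns, replace $F'$ by a symmetric copy of whichever has more external edges; the resulting graph $G^{\star}$ satisfies $e(G^{\star}) \ge e(G)$, and any putative forbidden $H \subseteq G^{\star}$ would localise to a linear-forest configuration across the replaced component, already excluded above, so $G^{\star}$ remains $\mathcal{H}$-free. Iterating delivers a graph lying in $\mathcal{D}(n,p,\varphi)$ for a single absorbed constant $\varphi = \varphi(\mathcal{H})$, proving the first statement.

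For the \emph{furthermore} clause, suppose $G_0$ is the unique extremal member of $\mathcal{D}(n,p,\varphi)$. The swap procedure above transforms every extremal $G$ into some graph in $\mathcal{D}(n,p,\varphi) \cap \mathrm{EX}(n,\mathcal{H}) = \{G_0\}$ without losing edges; since each swap is an isomorphism-level adjustment that is reversible, $G$ itself must be isomorphic to $G_0$. The delicate point throughout is verifying that neither the exceptional-set reduction nor the interchange step ever creates a forbidden subgraph. This is exactly where the hypothesis $L \in \mathcal{M}(\mathcal{H})$ is indispensable: linear forests embed easily through both the bounded defect region and the near-Tur\'{a}n bulk, which makes the supersaturation and interchange arguments tight enough to force the symmetry conditions of Definition~\ref{def2.1AB}.
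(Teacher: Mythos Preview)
The paper does not give a proof of this lemma; it is quoted verbatim as Theorem~1 of Simonovits (1974) and used as a black box. So there is no ``paper's own proof'' to compare against, and your task was effectively to reconstruct Simonovits' original argument.

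Your sketch has the right skeleton (stability $\to$ bounded exceptional set $\to$ symmetrisation), but two steps are genuine gaps rather than routine details. First, you never explain how to force all components of $G_i$ to be \emph{isomorphic to a single} $F_i$; your swap argument only compares two components that are already of the same isomorphism type. Reaching condition~(ii) of Definition~\ref{def2.1AB} requires eliminating components of different types, and this is where Simonovits' progressive-induction machinery does real work --- a simple ``replace the less popular type by the more popular one'' swap can fail because the replacement can assemble a copy of $L$ (and hence of some $H\in\mathcal{H}$) inside $V_i$ that was not there before. Second, your justification that the swap preserves $\mathcal{H}$-freeness (``any putative forbidden $H\subseteq G^\star$ would localise to a linear-forest configuration across the replaced component, already excluded above'') is an assertion, not an argument: a forbidden $H$ in $G^\star$ may use the replaced component together with other components of $G_i$ in a way that was impossible in $G$, and nothing you have said rules this out.

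Finally, your proof of the \emph{furthermore} clause is incorrect as stated. The swaps are not ``isomorphism-level adjustments that are reversible'': a swap that strictly increases the edge count is not reversible, and if no swap strictly increases edges you have not shown that $G$ already lies in $\mathcal{D}(n,p,\varphi)$. Simonovits handles uniqueness via the progressive-induction framework (tracking a defect function over $n$), not by reversibility of individual moves.
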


\section{Proof of Theorem \ref{thm1.1}}\label{section3}

Let $\mathcal{H}$ be a finite graph family
whose decomposition family $\mathcal{M}(\mathcal{H})$ is matching-good,
and let $G$ be any graph in $\mathrm{SPEX}(n,\mathcal{H})$.
Theorem \ref{thm1.1} asserts that $G$ must belong to $\mathrm{EX}(n,\mathcal{H})$.

From Definition \ref{definition-1.2}, we know that $p(\mathcal{H})=p\geq 2$,
and that $\mathcal{M}(\mathcal{H})$ contains a matching.
Hence, Lemma \ref{LEMMA2.3} is applicable.
Now, let $W,\varphi,\gamma,V_i$, and $V_i'$ (where $i\in [p]$) be defined as in Lemma \ref{LEMMA2.3}.
Set $V''_i=V_i\setminus V_i'$ for each $i\in [p]$.
By Lemma \ref{LEMMA2.3},
we have $|V_i''|=\varphi\geq 1$ and $|V_i|\in \{\lfloor\frac{n-\gamma+1}{p}\rfloor, \lceil\frac{n-\gamma+1}{p}\rceil\}$ for $i\in [p]$.

Now, set $Q=W\cup (\cup_{i\in [p]}V_i'')$ and
$\eta=|Q|$.
Clearly, $\eta=(\gamma-1)+p\varphi\geq p$, and thus $Q\neq \varnothing$.
Since $p(\mathcal{H})=\min_{H\in \mathcal{H}}\chi(H)-1\geq2$,
it follows that $T_p(n)$ is $\mathcal{H}$-free.
Using the Rayleigh quotient and the well-known inequality $e(T_p(n))\geq \frac{p-1}{2p}n^2-\frac{p}{8}$, we obtain
\begin{equation}\label{equation-1}
  \rho(G)
  \geq \rho(T_p(n))
   \geq \frac{\mathbf{1}^{\mathrm{T}}A(T_p(n))\mathbf{1}}{\mathbf{1}^{\mathrm{T}}\mathbf{1}}
  =\frac{2e(T_p(n))}{n}
  \geq \frac{p-1}{p}n-\frac{p}{4n}.
\end{equation}
Let $\mathbf{x}=(x_1,\ldots, x_n)^{\mathrm{T}}$ be the Perron vector of $G$.
Choose $u^*\in Q$ such that $x_{u^*}=\max\{x_v~|~v\in Q\}$, and
$v^*\in V(G)\setminus Q$ such that  $x_{v^*}=\max\{x_v~|~v\in V(G)\setminus Q\}$.
Then
\begin{equation*}
\rho(G)x_{u^*}=\sum\limits_{u\in N_{G}(u^*)\cap Q}x_u+\sum\limits_{u\in N_{G}(u^*)\setminus Q}x_u
\leq \eta x_{u^*}+nx_{v^*}.
\end{equation*}
Combining this with \eqref{equation-1} and noting that $p\geq 2$, we obtain that for sufficiently large $n$,
\begin{equation}\label{equation-2}
x_{u^*}\leq \frac{n}{\rho(G)-\eta}x_{v^*}<2.1x_{v^*}.
\end{equation}
Assume that $v^*\in V'_{i^*}$ for some $i^*\in [p]$.
Then
\begin{align*}
 \rho(G)x_{v^*}
 \leq  \sum\limits_{v\in Q}x_v+\sum\limits_{i\in [p]\setminus\{i^*\}}\sum\limits_{v\in V'_i}x_v
 \leq  \eta x_{u^*}+\sum\limits_{i\in [p]\setminus\{i^*\}}\sum\limits_{v\in V'_i}x_v.
\end{align*}
In view of \eqref{equation-1}, we have $\rho(G)>\frac{2.1}{6} n$.
Combining these two inequalities with \eqref{equation-2} gives
\begin{align}\label{equation-3}
\sum\limits_{i\in [p]\setminus\{i^*\}}\sum\limits_{v\in V'_i}x_v
\geq \big(\rho(G)-2.1\eta \big)x_{v^*}>\big(1-\frac{6\eta}{n}\big)\rho(G)x_{v^*}.
\end{align}

In what follows, we present several claims.

\begin{claim}\label{lemma3.09ABC}
 For each $v\in V(G)$, we have $x_v\geq \big(1-\frac{8\eta}{n}\big)x_{v^*}$.
\end{claim}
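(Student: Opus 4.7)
My plan is to split the argument according to whether $v\in\bigcup_{i\in[p]}V_i'$ or $v\in Q$. In the former case the eigenvalue equation combined with \eqref{equation-3} yields the bound directly; in the latter case the eigenvalue equation alone is insufficient, so I will appeal to the spectral extremality of $G$ via a vertex-switching argument.

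Assume first that $v\in V_j'$ for some $j\in[p]$. Since every vertex of $V_{i^*}'$ has the common neighborhood $V(G)\setminus V_{i^*}$, all such vertices share a single Perron entry, so if $j=i^*$ then $x_v=x_{v^*}$ trivially. If $j\neq i^*$, the eigenvalue equation gives
\[
\rho(G)x_v=\sum_{u\in V(G)\setminus V_j}x_u\geq \sum_{i\in[p]\setminus\{j\}}\sum_{u\in V_i'}x_u=\sum_{i\in[p]\setminus\{i^*\}}\sum_{u\in V_i'}x_u+|V_{i^*}'|x_{v^*}-|V_j'|x_v,
\]
where the last equality uses that every vertex of $V_j'$ shares the entry $x_v$. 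Plugging in the lower bound from \eqref{equation-3}, using $\bigl||V_j'|-|V_{i^*}'|\bigr|\leq 1$ (from Lemma \ref{LEMMA2.3}(i)) and the lower bound $\rho(G)\geq \frac{p-1}{p}n-\frac{p}{4n}$ from \eqref{equation-1}, a routine rearrangement yields $x_v>(1-\tfrac{8\eta}{n})x_{v^*}$.

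For $v\in Q$, let $G'$ be obtained from $G$ by deleting every edge incident to $v$ and joining $v$ to all of $V(G)\setminus(V_{i^*}\cup\{v\})$, so that $v$ and $v^*$ become twins in $G'$. The crucial step is to verify that $G'$ remains $\mathcal{H}$-free. Suppose some copy $H^*$ of an $H\in\mathcal{H}$ appears in $G'$; since $G$ itself is $\mathcal{H}$-free, $H^*$ must use $v$. In $G'$ all non-neighbors of $v$ (other than $v$ itself) lie in $V_{i^*}$, while all neighbors lie in $V(G)\setminus V_{i^*}$. Because $|V_{i^*}'|\geq \lfloor(n-\gamma+1)/p\rfloor-\varphi$ is much larger than $\max_{H\in\mathcal{H}}|V(H)|$, I can pick $w\in V_{i^*}'\setminus V(H^*)$; since $N_G(w)=V(G)\setminus V_{i^*}$, replacing $v$ by $w$ in $H^*$ produces a copy of $H$ in $G$, contradicting the $\mathcal{H}$-freeness of $G$.

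With $\mathcal{H}$-freeness of $G'$ established, the extremality of $G$ forces $\rho(G')\leq \rho(G)$. Evaluating the Rayleigh quotient for $G'$ at the Perron vector $\mathbf{x}$ of $G$ and simplifying, one gets $\rho(G)(x_{v^*}-x_v)\leq x_v$, and hence $x_v\geq x_{v^*}/(1+1/\rho(G))$. Since $\rho(G)=\Omega(n)$ by \eqref{equation-1}, this is strictly stronger than the required bound $(1-\tfrac{8\eta}{n})x_{v^*}$ for large $n$. The principal obstacle is the switching step, i.e.\ verifying that $G'$ remains $\mathcal{H}$-free, which crucially exploits that the finite family $\mathcal{H}$ has members of bounded size compared to $|V_{i^*}'|$.
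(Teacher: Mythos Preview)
Your argument is correct. The paper handles all vertices in a single stroke: it assumes for contradiction that some $u_0$ has $x_{u_0}<(1-\tfrac{8\eta}{n})x_{v^*}$, reconnects $u_0$ only to $V'=\bigl(\bigcup_{i\neq i^*}V_i'\bigr)\setminus\{u_0\}$, and then uses \eqref{equation-3} together with the assumed bound on $x_{u_0}$ to show that $\sum_{V'}x_u>\rho(G)x_{u_0}$, forcing $\rho(G')>\rho(G)$. Your Case~2 switches $v$ to the larger set $V(G)\setminus(V_{i^*}\cup\{v\})$, making $v$ a twin of $v^*$; this turns the Rayleigh comparison into the identity $\sum_{u\in N_{G'}(v)}x_u=\rho(G)x_{v^*}$ (minus at most $x_v$), and delivers the sharper conclusion $x_v\geq x_{v^*}/(1+1/\rho(G))$ without appealing to \eqref{equation-3} or to the specific constant $8\eta$. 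The $\mathcal{H}$-freeness check is the same in both proofs, via replacement by a vertex of $V_{i^*}'$. Two remarks: your Case~2 switching applies verbatim to every $v\in V(G)$, so Case~1 is in fact superfluous (this is why the paper can treat all vertices uniformly); and in the subcase $v\in V_{i^*}''$ of Case~2 the Rayleigh inequality reads $\rho(G)x_{v^*}\leq\rho(G)x_v$ rather than $\rho(G)(x_{v^*}-x_v)\leq x_v$, but the conclusion is of course even stronger.
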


\begin{proof}
Suppose, to the contrary, that there exists a vertex $u_0$ such that $x_{u_0}<(1-\frac{8\eta}{n})x_{v^*}$.
Let $G'$ be the graph obtained from $G$ by deleting all edges incident to $u_0$
and adding all possible edges between $u_0$ and $V'=(\bigcup_{i\in [p]\setminus \{i^*\}}V'_i)\setminus \{u_0\}$.

We aim to prove that $G'$ is $\mathcal{H}$-free.
Suppose, for the sake of contradiction, that $G'$ contains a subgraph $H\in\mathcal{H}$.
From the construction of $G'$,
we conclude that $u_0\in V(H)$ and $N_H(u_0)\subseteq V'$.
By Lemma \ref{LEMMA2.3}, we have $|V'_{i^*}|=\Theta(n/p)$. Hence,
$d_H(u_0)<|H|<|V'_{i^*}|$.
Choose a vertex $u\in V'_{i^*}\setminus V(H)$.
By Lemma \ref{LEMMA2.3}, $V'\subseteq N_G(u)$, and thus $N_H(u_0)\subseteq V'\subseteq N_G(u)$.
Therefore, the subgraph $G[(V(H)\setminus \{u_0\})\cup\{u\}]$ contains a copy of $G'$, and thus a copy of $H$,
leading to a contradiction.

On the other hand,
since $x_{u_0}<\big(1-\frac{8\eta}{n}\big)x_{v^*}<x_{v^*}$, by \eqref{equation-3}, we have
\begin{align*}
  \sum\limits_{v\in V'}\!\!x_v\!-\!\!\sum\limits_{v\in N_G(u_0)}\!\!x_{v}\geq\sum\limits_{i\in [p]\setminus \{i^*\}}\sum\limits_{v\in V'_i}x_v\!-\!x_{u_0}\!-\!\rho(G)x_{u_0}
  >\big(\frac{2\eta}{n}\rho(G)\!-\!1\big)x_{v^*}\geq 0.
\end{align*}
This leads to:
\begin{align}\label{0004}
  \rho(G')\!-\!\rho(G) \geq  \mathbf{x}^{\mathrm{T}}\big(A(G')\!-\!A(G)\big)\mathbf{x}
                  = 2x_{u_0}\Big(\sum\limits_{v\in V'}x_v-\!\!\!\sum\limits_{v\in N_G(u_0)}\!\!x_v\Big)>0,
\end{align}
which contradicts the fact that $G\in {\rm SPEX}(n,\mathcal{H})$.
The result follows.
\end{proof}

By Lemma \ref{LEMMA2.3}, $N_G(v)=V(G)\setminus V_i$ for each $v\in V_i'$,
and $N_G(u)\supseteq \cup_{j\in [p]\setminus\{i\}}V_j'$ for each $u\in V_i''$.
Hence, $x_v$ is constant for $v\in V_i'$,
and we may assume that $x_v=x_i$ for each $v\in V_i'$, where $i\in [p]$.
Furthermore,
for any $u\in V''_i$, it is not hard to verify that
\begin{align*}
-\eta x_{u^*}\leq -\sum\limits_{\widetilde{u}\in Q}x_{\widetilde{u}} \leq \rho(G)(x_u-x_{i})\leq \sum\limits_{\widetilde{u}\in V''_i}x_{\widetilde{u}}\leq \eta x_{u^*}.
\end{align*}
Recall that $\eta$ is constant.
Based on inequality \eqref{equation-2} and Claim \ref{lemma3.09ABC}, we obtain
\begin{align}\label{equation-4}
x_{u^*}<2.1x_{v^*}\leq \frac{2.1}{1-\frac{8\eta}{n}}x_i<3x_i
\end{align}
for each $i\in [p]$. Hence,
$-3\eta x_{i}\leq \rho(G)(x_u-x_{i})\leq3\eta x_{i}.$
By \eqref{equation-1}, we have $\rho(G)\geq\frac{3}{7}n$, and thus
\begin{align}\label{equation-5}
1-\frac{7\eta~}{n}\leq \frac{\rho(G)-3\eta~}{\rho(G)}\leq \frac{x_{u}}{x_i}
            \leq \frac{\rho(G)+3\eta~}{\rho(G)}\leq 1+\frac{7\eta~}{n}
\end{align}
for any $i\in [p]$ and any $u\in V_i''$. Clearly, $|V''_i|\leq \eta$ for each $i\in [p]$. It follows that
\begin{align*}
\big(|V''_i|\!-\!\frac{7\eta^2}{n}\big)x_i
\leq |V''_i|\big(1\!-\!\frac{7\eta}{n}\big)x_i
\leq \sum\limits_{\widetilde{u}\in V''_i}x_{\widetilde{u}}
\leq |V''_i|\big(1\!+\!\frac{7\eta}{n}\big)x_i
\leq \big(|V''_i|\!+\!\frac{7\eta^2}{n}\big)x_i.
\end{align*}
Then, there exists an $\epsilon_i\in [-\frac{7\eta^2}{n},\frac{7\eta^2}{n}]$ such that
$\sum_{\widetilde{u}\in V''_i}x_{\widetilde{u}}=(|V''_i|+\epsilon_i)x_i$.
Therefore,
$\sum_{v\in V_i}x_v=|V'_i|x_i+(|V''_i|+\epsilon_i)x_i=(|V_i|+\epsilon_i)x_i.$
Since $\rho(G)x_i=\sum_{v\in V(G)\setminus V_i}x_v$, we obtain
$(\rho(G)+|V_i|+\epsilon_i)x_i=\sum_{v\in V(G)}x_v$.
Set $n_i=|V_i|$ and $n_i^*=|V_i|+\varepsilon_i$ for each $i\in [p]$.
Then, $n_i\geq\lfloor\frac{n-\gamma+1}{p}\rfloor$ and
\begin{align}\label{equation-6}
x_i=\frac{1}{\rho(G)+n_i^*}\sum\limits_{v\in V(G)}x_v.
\end{align}

\begin{claim}\label{claim-A3.1}
For every $u\in V(G)\setminus W$, we have $1\!-\!\frac{30\eta}{n}\!\leq\!\sqrt{n}x_u\!\leq\!1\!+\!\frac{30\eta}{n}$.
If $W\neq \varnothing$, then for each $w\in W$,
$\frac{p}{p-1}\!-\!\frac{120\eta}{n}\!\leq\!\sqrt{n}x_w\!\leq\!\frac{p}{p-1}\!+\!\frac{120\eta}{n}$.
\end{claim}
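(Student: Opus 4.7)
The plan is to deduce the claim from identity \eqref{equation-6}, the normalization $\|\mathbf{x}\|_2=1$, and the structural consequence of Lemma \ref{LEMMA2.3}(ii) that every $w\in W$ is adjacent to $V(G)\setminus Q$.

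\textbf{Pinning down $\rho(G)$.} Combining $S=\sum_{i\in[p]}n_i^*x_i+\sum_{w\in W}x_w$ with \eqref{equation-6} yields the self-consistent identity
\begin{equation*}
S\Bigl(\rho(G)\sum_{i\in[p]}\tfrac{1}{\rho(G)+n_i^*}-(p-1)\Bigr)=\sum_{w\in W}x_w.
\end{equation*}
The lower bound $\rho(G)\geq\tfrac{p-1}{p}n-\tfrac{p}{4n}$ is already \eqref{equation-1}. For the matching upper bound, the key is that $0\leq\sum_{w\in W}x_w\leq(\gamma-1)S/\rho(G)$ because $\rho(G)x_w\leq S$ for each $w$. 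This forces $\rho(G)\sum_{i\in[p]}(\rho(G)+n_i^*)^{-1}-(p-1)\in[0,O(1/n)]$, and solving with $n_i^*=(n-\gamma+1)/p+O(1)$ gives $\rho(G)=\tfrac{p-1}{p}n+O(1)$, hence $\rho(G)+n_i^*=n+O(1)$ for every $i$. This is the only genuinely delicate step; the naive $\rho(G)\leq\sqrt{2e(G)}$ only yields $\sqrt{(p-1)/p}\cdot n+O(1)$, which is strictly weaker than what is needed.

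\textbf{Pinning down $S$ and bounding $x_u$ for $u\in V(G)\setminus W$.} Splitting $1=\|\mathbf{x}\|_2^2$ over the partition, using \eqref{equation-5} to replace $x_v$ by $x_i(1+O(\eta/n))$ on $V_i''$ and \eqref{equation-4} on the $O(1)$ vertices of $W$, the dominant contribution is
\begin{equation*}
1=\sum_{i\in[p]}n_ix_i^2\bigl(1+O(\eta^2/n^2)\bigr)+O(x_{u^*}^2)=\tfrac{S^2}{n}\bigl(1+O(1/n)\bigr),
\end{equation*}
so $S=\sqrt n+O(1/\sqrt n)$. Substituting into \eqref{equation-6} yields $\sqrt n\,x_i=1+O(1/n)$, which is $\sqrt n\,x_u$ directly when $u\in V_i'$. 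For $u\in V_i''$, \eqref{equation-5} inflates the error by a factor $1+O(\eta/n)$, still leaving $|\sqrt n\,x_u-1|\leq 30\eta/n$ for $n$ sufficiently large.

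\textbf{Bounding $x_w$ for $w\in W$.} Lemma \ref{LEMMA2.3}(ii) gives $w\in V(G)\setminus V_i=N_G(v)$ for every $v\in V_i'$ and every $i$, so $N_G(w)\supseteq\bigcup_{i\in[p]}V_i'=V(G)\setminus Q$. Combined with \eqref{equation-4} and the bound $x_{u^*}<3x_i=O(1/\sqrt n)$ from the previous step,
\begin{equation*}
S-\eta x_{u^*}\leq\rho(G)\,x_w\leq S,
\end{equation*}
and dividing by $\rho(G)=\tfrac{p-1}{p}n+O(1)$ produces $\sqrt n\,x_w=\tfrac{p}{p-1}+O(1/n)$, comfortably within the claimed $120\eta/n$ tolerance.
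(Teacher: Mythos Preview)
Your argument is correct and uses the same ingredients as the paper—identity \eqref{equation-6}, the ratio bounds \eqref{equation-4}--\eqref{equation-5}, the observation $\bigcup_{i\in[p]} V_i'\subseteq N_G(w)$ from Lemma~\ref{LEMMA2.3}(ii), and the normalization $\|\mathbf x\|_2=1$—but the organization differs. The paper anchors everything to $x_1$: it first bounds the ratios $x_u/x_1$ for $u\notin W$ and then $x_w/x_1$ via the eigenvector equation at vertices of $V_1'$ (the two-sided estimate $\rho(G)=\tfrac{p-1}{p}n+O(\eta)$ falls out as a by-product of bounding $\rho(G)x_1$), and only at the end solves the normalization for $x_1$. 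You instead isolate $\rho(G)$ first via the self-consistent identity $S\bigl(\rho\sum_i(\rho+n_i^*)^{-1}-(p-1)\bigr)=\sum_{w\in W}x_w$, then solve the normalization for $S$, and read off each $x_i$, $x_u$, $x_w$ afterwards. Your route front-loads an implicit inversion (solving a monotone nonlinear equation for $\rho$) that the paper avoids by working with ratios throughout; otherwise the two are interchangeable. One caveat: the claim states the explicit constants $30$ and $120$, but your sketch only delivers $O(\eta/n)$ with untracked implicit constants—note in particular that the ``$O(1)$'' in your $\rho(G)=\tfrac{p-1}{p}n+O(1)$ is really $O(\eta)$—so you would need explicit bookkeeping comparable to the paper's to confirm the stated numerical bounds.
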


\begin{proof}
By Lemma \ref{LEMMA2.3}, for $i\in[p]$, we have
$|n_1\!-\!n_i|\leq 1$, and thus $|n^*_1\!-\!n^*_i|=|(n_1\!-\!n_i)\!+\!(\varepsilon_1\!-\!\varepsilon_i)|<1.1$.
Recall that $\eta=(\gamma-1)+p\varphi$.
From inequality \eqref{equation-1}, we obtain
\begin{align*}
\rho(G)\!+\!n_i^*\!\geq\!\frac{p-1}{p}n\!-\!\frac{p}{4n}\!
+\!\lfloor\frac{n-\gamma+1}{p}\rfloor\!+\!\varepsilon_i\!\geq\!n\!-\!2\eta.
\end{align*}
Thus, by \eqref{equation-6}, we can deduce that
\begin{align}\label{0000}
\Big|\frac{x_i}{x_1}-1\Big|=\Big|\frac{\rho(G)+n_1^*}{\rho(G)+n_i^*}-1\Big|
\leq\frac{1.1}{\rho(G)+n_i^*}\leq\frac{1.2}{n}.
\end{align}

Given $u\in V(G)\setminus W$, we may assume that $u\in V_i$ for some $i\in [p]$.
Notice that $\frac{x_u}{x_1}=\frac{x_u}{x_i}\cdot\frac{x_i}{x_1}$.
Combining this with \eqref{equation-5} and \eqref{0000} yields
\begin{align}\label{0001}
1\!-\!\frac{9\eta}{n}\!\leq\!\big(1\!-\!\frac{1.2}{n}\big)\big(1\!-\!\frac{7\eta}{n}\big)\!\leq\!\frac{x_u}{x_1}
\!\leq\!\big(1\!+\!\frac{1.2}{n}\big)\big(1\!+\!\frac{7\eta}{n}\big)\!\leq\!1\!+\!\frac{9\eta}{n}.
\end{align}

Now, assume that $W\neq \varnothing$, and
choose an arbitrary vertex $w\in W$. From \eqref{0001}, we know that
\begin{align*}
\rho(G)x_w\geq\!\!\sum\limits_{v\in \cup_{i\in [p]}V'_i}x_v
\geq(n-\eta)\big(1-\frac{9\eta}{n}\big)x_1\geq (n-10\eta)x_1.
\end{align*}
In view of \eqref{equation-4}, we have $x_{u^*}<3x_{1}$.
Combining this with \eqref{0001} gives
\begin{align*}
\rho(G)x_w
=\!\!\sum\limits_{u\in N_{W}(w)}x_u+\!\!\sum\limits_{u\in N_{G}(w)\setminus W}x_u
\leq \eta\cdot 3x_{1}+n\big(1+\frac{9\eta}{n}\big)x_{1}
= (n+12\eta)x_1.
\end{align*}

On the other hand, by Lemma \ref{LEMMA2.3}, we have $\lfloor\frac{n-\gamma+1}{p}\rfloor\leq n_1\leq\lceil\frac{n-\gamma+1}{p}\rceil$.
Thus, by \eqref{0001}, we obtain that
\begin{align*}
\rho(G)x_1
\geq\!\!\!\sum\limits_{v\in \cup_{i=2}^pV_i}x_v
\geq\big(n\!-\!\eta\!-\!\big\lceil\frac{n-\gamma+1}{p}\big\rceil\big)\big(1\!-\!\frac{9\eta}{n}\big)x_1
\geq \big(\frac{p-1}{p}n\!-\!12\eta\big)x_1.
\end{align*}
Moreover, by \eqref{equation-4}, we know that $x_{u^*}<3x_1$. Thus, by \eqref{0001}, we obtain
\begin{align*}
\rho(G)x_1
&\leq\sum\limits_{w\in W}\!x_w+\!\!\!\sum\limits_{v\in \cup_{i=2}^pV_i}\!x_v
\leq \eta \cdot 3x_1+\big(n-\big\lfloor\frac{n-\gamma+1}{p}\big\rfloor\big)
\big(1+\frac{9\eta}{n}\big)x_1\\
&\leq \big(\frac{p-1}{p}n+14\eta\big)x_1.
\end{align*}
Note that $\frac12\leq \frac{p-1}{p}<1$ as $p\geq2$.
Combining the above four inequalities, we derive that
\begin{align}\label{0002}
\frac{p}{p\!-\!1}\!-\!\frac{76\eta~}{n}\!\leq\!\frac{n\!-\!10\eta~}{(1\!-\!1/p)n\!+\!14\eta~}
\!\leq\!\frac{x_w}{x_1}\!\leq\!\frac{n\!+\!12\eta~}{(1\!-\!1/p)n\!-\!12\eta~}\!
\leq\!\frac{p}{p\!-\!1}\!+\!\frac{76\eta~}{n}.
\end{align}

Now, in view of \eqref{0001} and \eqref{0002}, we obtain $\sum_{v\in V(G)}x^2_v\geq n\big(1\!-\!\frac{9\eta}{n}\big)^2x^2_1
\geq (n\!-\!18\eta)x^2_1$ and
\begin{equation*}
\sum\limits_{v\in V(G)}\!\!x^2_v
\leq \eta\big(\frac{p}{p-1}\!+\!\frac{76\eta}{n}\big)^2x^2_1+n\big(1\!+\!\frac{9\eta}{n}\big)^2x^2_1
\leq (n\!+\!24\eta)x^2_1.
\end{equation*}

Since $\sum_{v\in V(G)}x^2_v=\mathbf{x}^{\mathrm{T}}\mathbf{x}=1$, it follows that $\frac{1}{\sqrt{n+24\eta}} \leq x_1\leq \frac{1}{\sqrt{n-18\eta}}$,
which further implies that
$\big(1\!-\!\frac{20\eta~}{n}\big)\frac{1}{\sqrt{n}}\leq x_1\leq \big(1\!+\!\frac{20\eta}{n}\big)\frac{1}{\sqrt{n}}.$
Combining this with \eqref{0001},
for any $u\in V(G)\setminus W$, we have
\begin{align*}
x_u\leq \big(1+\frac{9\eta}{n}\big)x_1
\leq \big(1+\frac{9\eta}{n}\big)\big(1+\frac{20\eta}{n}\big)\frac{1}{\sqrt{n}}\leq \big(1+\frac{30\eta}{n}\big)\frac{1}{\sqrt{n}},
\end{align*}
and similarly,
$x_u\geq \big(1\!-\!\frac{9\eta}{n}\big)x_1
\geq \big(1\!-\!\frac{30\eta}{n}\big)\frac{1}{\sqrt{n}},$
as desired.

If $W\neq \varnothing$,
for any $w\in W$, by \eqref{0002}, we have
\begin{align*}
x_w\leq\big(\frac{p}{p-1}\!+\!\frac{76\eta}{n}\big)x_1
\leq \big(\frac{p}{p-1}\!+\!\frac{76\eta}{n}\big)\big(1\!+\!\frac{20\eta}{n}\big)\frac{1}{\sqrt{n}}\leq \big(\frac{p}{p-1}\!+\!\frac{120\eta}{n}\big)\frac{1}{\sqrt{n}},
\end{align*}
and similarly,
$x_w\geq\big(\frac{p}{p-1}\!-\!\frac{76\eta}{n}\big)x_1
\ge\big(\frac{p}{p-1}\!-\!\frac{120\eta}{n}\big)\frac{1}{\sqrt{n}}.$
This completes the proof.
\end{proof}

Recall that $V(G)=W\cup (\cup_{i\in [p]}V_i)$,
where $|W|=\gamma-1$ and $|V_i|\in \{\lfloor\frac{n-\gamma+1}{p}\rfloor, \lceil\frac{n-\gamma+1}{p}\rceil\}$ for $i\in [p]$.
Let $K= E_{\gamma-1}\nabla T_p(n-\gamma+1)$,
and assume that $V(E_{\gamma-1})=W$ and $V_1,\dots,V_p$ are the $p$ color classes of $T_p(n-\gamma+1)$.
We now define four edge sets as follows:
\begin{itemize}\setlength{\itemsep}{0pt}
\item $E_1^*=E\big(G[W]\big)$;
\item $E_2^*=E\big(K[W,\cup_{i\in [p]}V''_i]\big)\,\setminus\,E\big(G[W,\cup_{i\in [p]}V''_i]\big)$;
\item $E_3^*=E\big(K[\cup_{i\in [p]}V''_i]\big)\,\setminus\,E\big(G[\cup_{i\in [p]}V''_i]\big)$;
\item $E_4^*=E\big(G[\cup_{i\in [p]}V''_i]\big)\,\setminus\,E\big(K[\cup_{i\in [p]}V''_i]\big)$.
\end{itemize}
Then, it is clear that $E(G)\setminus(E_1^*\cup E_4^*)=E(K)\setminus(E_2^*\cup E_3^*)$.

\begin{claim}\label{claim-A3.B}
$|E_1^*|\leq {\rm ex}\big(\gamma-1,\mathcal{B}(\mathcal{H})\big)$.
\end{claim}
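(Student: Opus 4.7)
The plan is to prove the stronger statement that $G[W]$ is $\mathcal{B}(\mathcal{H})$-free. Since $|W|=\gamma-1$, this at once yields $|E_1^*|=e(G[W])\leq \mathrm{ex}(\gamma-1,\mathcal{B}(\mathcal{H}))$. Two cases arise from the definition of $\mathcal{B}(\mathcal{H})$. If $\beta=\gamma$, then $\mathcal{B}(\mathcal{H})=\{K_\gamma\}$, and $G[W]$ trivially contains no $K_\gamma$ because $|W|=\gamma-1<\gamma$; the bound degenerates to $|E_1^*|\leq\binom{\gamma-1}{2}$, which is automatic.

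For the substantive case $\beta<\gamma$, I would argue by contradiction. Suppose $G[W]$ contains a copy of some $B=M[S]\in \mathcal{B}(\mathcal{H})$, where $M\in \mathcal{M}(\mathcal{H})$ and $S$ is a covering of $M$ with $|S|<\gamma$. By Definition \ref{definition-1.1}, there exist $H\in\mathcal{H}$ and a constant $t$ with $H\subseteq (M\cup E_t)\nabla T_{p-1}((p-1)t)$, so it suffices to embed the supergraph $(M\cup E_t)\nabla T_{p-1}((p-1)t)$ (hence $H$) into $G$ to reach a contradiction with the $\mathcal{H}$-freeness of $G$. The natural embedding is: keep $S$ mapped onto its given copy inside $W$; place the $|V(M)|-|S|$ vertices of $V(M)\setminus S$ on fresh vertices of $V_1'$; pick $t$ further vertices of $V_1'$ to represent the $E_t$ piece; and spread $T_{p-1}((p-1)t)$ by taking $t$ fresh vertices from each of $V_2',\ldots,V_p'$. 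The lower bounds $|V_j'|\geq \lfloor(n-\gamma+1)/p\rfloor-\varphi$ from Lemma \ref{LEMMA2.3} guarantee that all these pairwise disjoint choices are available for sufficiently large $n$.

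The crux is verifying that this embedding actually realises the target graph as a subgraph of $G$. By Lemma \ref{LEMMA2.3}, each $V_j'$ is independent in $G$ and fully joined to $V(G)\setminus V_j$, so vertices placed in $V_j'$ receive full adjacency to $W$ and to every other $V_i'$, and no adjacency inside $V_j'$. This supplies every join edge between $M\cup E_t$ (sitting in $W\cup V_1'$) and $T_{p-1}((p-1)t)$ (sitting in $V_2'\cup\cdots\cup V_p'$), the correct Turán structure across $V_2',\ldots,V_p'$, and the $S$-to-$(V(M)\setminus S)$ edges needed inside $M$. The only potential obstruction is a missing edge inside $V_1'$, and this is precisely ruled out by the choice of $S$: since $S$ is a covering, $V(M)\setminus S$ is independent in $M$, so no edges inside $V_1'$ are demanded by the embedding. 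I expect this covering-independence observation to be the main conceptual step; everything else is a bookkeeping check against the structural description of $G$ supplied by Lemma \ref{LEMMA2.3}.
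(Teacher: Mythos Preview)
Your proposal is correct and follows essentially the same route as the paper: reduce to showing $G[W]$ is $\mathcal{B}(\mathcal{H})$-free, dispose of the trivial case $\beta=\gamma$, and in the case $\beta<\gamma$ embed $M$ into $G[W\cup V_1']$ (using that $V(M)\setminus S$ is independent because $S$ is a covering) and then extend to $(M\cup E_t)\nabla T_{p-1}((p-1)t)$ via the complete join structure on $\cup_{i\in[p]}V_i'$. The paper compresses the embedding into two sentences, whereas you spell out the placement of each piece explicitly, but the argument is the same.
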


\begin{proof}
Note that $|W|=\gamma-1$.
If $W=\varnothing$, then $E_1^*=\varnothing$, and we are done.
If $\gamma\neq\varnothing$, it suffices to show that $G[W]$ is $\mathcal{B}(\mathcal{H})$-free.

By the definition of $\mathcal{B}(\mathcal{H})$,
if $\beta=\gamma$, then $\mathcal{B}(\mathcal{H})=\{K_{\gamma}\}$.
Furthermore, since $|W|=\gamma-1$, $G[W]$ is clearly $\mathcal{B}(\mathcal{H})$-free.
If $\beta<\gamma$,
then
$$\mathcal{B}(\mathcal{H})=\{M[S]~|~M\in \mathcal{M}(\mathcal{H}),~S~\text{is a covering of}~M~\text{with}~|S|<\gamma\}.$$
Suppose, to the contrary, that $G[W]$ contains a subgraph isomorphic to some member in $\mathcal{B}(\mathcal{H})$.
Then,
there exist an $M\in \mathcal{M}(\mathcal{H})$ and a covering $S$ of $M$ with
$|S|<\gamma$ such that $M[S]\subseteq G[W]$.
Furthermore, since $G[W,V'_1]$ is complete bipartite, we have $M\subseteq G[W\cup V'_1]$.
By Definition \ref{definition-1.1},
$G[W\cup (\cup_{i\in [p]}V'_i)]$ contains a member of $\mathcal{H}$, a contradiction.
Therefore, $G[W]$ is $\mathcal{B}(\mathcal{H})$-free.
\end{proof}

Now, we complete the proof of Theorem \ref{thm1.1} by utilizing the structures of $K$, $G$,
and an edge-extremal graph $G^{\star}$ with the improved properties described in Definition \ref{definition-1.2}.

\begin{proof}[\textbf{Proof of Theorem \ref{thm1.1}}]
By Definition \ref{definition-1.2},
then there exists a graph $G^{\star}\in {\rm EX}(n,\mathcal{H})$ such that
$G^{\star}=H_1\nabla H_2$,
where $H_1\in {\rm EX}(\gamma-1,\mathcal{B}(\mathcal{H}))$ and $H_2$ differs from $T_{p}(n-\gamma+1)$ by $O(1)$ edges.
Assume that $V(H_1)=W^\star$ and $V_1^\star,\dots,V_p^\star$ are the $p$ color classes of $T_p(n-\gamma+1)$,
i.e., $V(H_2)=\cup_{i\in [p]}V_i^{\star}$.
We now define three edge sets as follows:
\begin{itemize}\setlength{\itemsep}{0pt}
\item $E_1^{\star}=E\big(H_1\big)$;
\item $E_3^{\star}=E\big(T_{p}(n-\gamma+1)\big)\,\setminus\,E\big(H_2\big)$;
\item $E_4^{\star}=E\big(H_2\big)\,\setminus\,E\big(T_{p}(n-\gamma+1)\big)$.
\end{itemize}

Observe that $|W|=|W^\star|=\gamma\!-\!1$
and $\max_{i,j\in[p]}(|V_i|\!-\!|V_j|)=\max_{i,j\in[p]}(|V_i^\star|\!-\!|V_j^\star|)\leq1$.
We can assume that $W^{\star}=W$ and $V_i^\star=V_i$ for $i\in [p]$.
Recall that $E(G)\setminus(E_1^*\cup E_4^*)=E(K)\setminus(E_2^*\cup E_3^*)$.
Similarly, we observe that
$E(G^{\star})\setminus(E_1^{\star}\cup E_4^{\star})=E(K)\setminus E_3^{\star}$.
Consequently,
 \begin{align}\label{equation-8AB}
e(G)-e(G^\star)
=\big(|E_1^*|\!+\!|E_4^*|\!-\!|E_2^*|\!-\!|E_3^*|\big)\!
-\!\big(|E^{\star}_1|\!+\!|E^{\star}_4|\!-\!|E^{\star}_3|\big).
\end{align}

Next, we prove that $e(G)=e(G^\star)$.
Suppose, to the contrary, that $e(G)<e(G^\star)$.
Then,
\begin{align}\label{equation-8A}
|E_1^*|+|E_4^*|-|E_2^*|-|E_3^*|< |E^{\star}_1|+|E^{\star}_4|-|E^{\star}_3|.
\end{align}
Notice that $|E^{\star}_1|=e(H_1)={\rm ex}\big(\gamma-1,\mathcal{B}(\mathcal{H})\big)$.
By Claim \ref{claim-A3.B}, we have $|E_1^*|\leq |E^{\star}_1|$.
Furthermore, by inequality \eqref{equation-8A}, we obtain
\begin{align*}
\frac{p^2}{(p-1)^2}\big(|E^{\star}_1|-|E_1^*|\big)
&\geq|E^{\star}_1|-|E_1^*|>(|E^{\star}_3|-|E^{\star}_4|)-|E_2^*|-|E_3^*|+|E_4^*|\\
&\geq (|E^{\star}_3|-|E^{\star}_4|)-\frac{p}{p-1}|E_2^*|-|E_3^*|+|E_4^*|.
\end{align*}
This leads to:
\begin{align}\label{equation-7}
\frac{p^2}{(p-1)^2}|E^{\star}_1|+|E^{\star}_4|-|E^{\star}_3|
>\frac{p^2}{(p-1)^2}|E_1^*|+|E_4^*|-\frac{p}{p-1}|E_2^*|-|E_3^*|.
\end{align}

Recall that $|W\cup (\cup_{i\in [p]}V''_i)|=\eta=(\gamma-1)+p\varphi$.
It follows that $|E_i^*|\leq \binom{\eta}{2}$ for $i\in [4]$.
On the other hand,
$|E_1^{\star}|\leq e(K_{|W|})=\binom{\gamma-1}{2}$,
and since $H_2$ differs from $T_{p}(n-\gamma+1)$ by $O(1)$ edges,
there exists a constant $\varphi^{\star}$ such that
$|E_j^{\star}|\leq \varphi^{\star}$ for $j\in \{3,4\}$.

Similarly to \eqref{0004}, and using \eqref{equation-8AB} and Claim \ref{claim-A3.1}, we have
\begin{align*}
    &~~~~~~\rho(G^\star)-\rho(G)\\
    &=2\Big(\!\sum\limits_{ww'\in E_1^\star}\!\!x_{w}x_{w'}\!+\!\!\!\sum\limits_{uv\in E_4^\star}\!\!x_{u}x_{v}\!-\!\!\!\sum\limits_{uv\in E_3^\star}\!\!x_{u}x_{v}\Big)
    \!-\!2\Big(\!\sum\limits_{ww'\in E_1^*}\!\!x_{w}x_{w'}\!+\!\!\!\sum\limits_{uv\in E_4^*}\!\!x_{u}x_{v}\!-\!\!\!\sum\limits_{wu\in E_2^*}\!\!x_{w}x_{u}\!-\!\!\!\sum\limits_{uv\in E_3^*}\!\!x_{u}x_{v}\Big) \\
  &\geq 2\big(\frac{p^2}{(p-1)^2}|E^{\star}_1|+|E^{\star}_4|-|E^{\star}_3|\big)\frac{1}{n}- 2\big(\frac{p^2}{(p-1)^2}|E_1^*|+|E_4^*|-\frac{p}{p-1}|E_2^*|-|E_3^*|\big)\frac{1}{n}
  -\frac{\sigma}{n^{2}}
\end{align*}
for some positive constant $\sigma$.
Combining this with \eqref{equation-7}, we obtain $\rho(G^{\star})>\rho(G)$,
which contradicts the fact that $G\in {\rm SPEX}(n,\mathcal{H})$.
Therefore, $e(G)=e(G^{\star})$, which implies that $G\in {\rm EX}(n,\mathcal{H})$.
This completes the proof.
\end{proof}

\section{Proof of Theorem \ref{thm1.9A}}\label{section4}

For each $i\in [t]$,
let $H_i$ be a finite graph with $\mathrm{ex}(n,H_i)=e(T_p(n))+O(1)$, where $n$ is sufficiently large.
Theorem \ref{thm1.9A} asserts that $\mathcal{M}(\cup_{i\in [t]}H_i)$ is matching-good.

By Theorem \ref{THM1.1A}, every $\mathcal{M}(H_i)$ contains a matching. Consequently,
$\mathcal{M}(\cup_{i\in [t]}H_i)$ also contains a matching.
By Lemma \ref{LEMMA2.4},
there exists a constant $\varphi=\varphi(\cup_{i\in [t]}H_i)$ such that
$\mathcal{D}(n,p,\varphi)$ contains a graph $G\in \mathrm{EX}(n,\cup_{i\in [t]}H_i)$.
By Definition \ref{def2.1AB}, $G$ satisfies the
following properties:

\begin{enumerate}
\item[($a$)] There exists a subset $R\subseteq V(G)$ such that $|R|\leq \varphi$,
and the remaining graph $G-R=\nabla_{i=1}^{p}G_i$,
where $\big||G_i|-\frac{n}{p}\big|\leq \varphi$ for each $i\in [p]$;

\item [($b$)] For each $i\in [p]$, there exists a connected graph $F_i$ such that $G_i=k_iF_i$,
where $k_i=|G_i|/|F_i|$ and $|F_i|\leq \varphi$.
Moreover, any two copies $F_i',F_i''$ of $F_i$ in $G_i$ are symmetric subgraphs of $G$.
\end{enumerate}

Set $\phi=\max\big\{|H_j|: j\in[t]\}$ and $U_i=V(G_i)$.
By ($a$), for every $i\in [p]$, we have
\begin{align}\label{eq4.1}
t\phi\varphi\leq \frac{n}{p}-\varphi\leq |U_i|\leq \frac{n}{p}+\varphi.
\end{align}

W first prove that $U_i$ is an independent set for each $i\in [p]$.
If $U_i$ is not an independent set for some $i$, then $F_i$ must be nontrivial.
Recall that $\mathcal{M}(\cup_{i\in [t]}H_i)$ contains a matching, denoted by $M_{a}$,
where $a\leq \frac12\sum_{i\in [t]}|H_i|\leq\frac12t\phi.$
Since $|F_i|\leq\varphi$ for $i\in [p]$,
by \eqref{eq4.1}, we have $k_i={|G_i|}/{|F_i|}\geq a$.
Thus, $G_i$ contains a matching $M_{a}$.
By the definition of a decomposition family,
$G[\cup_{i\in [p]}U_i]$ must contain a copy of $\cup_{i=1}^{t}H_i$, a contradiction.
Therefore, every $U_i$ is an independent set.

By ($b$), all vertices in $U_i$ are symmetric subgraphs of $G$,
i.e., $N_G(u_1)=N_G(u_2)$ for all $u_1,u_2\in U_i$.
This implies that for each $u\in R$ and each $i\in [p]$,
either $N_G(u)\supseteq U_i$ or $N_G(u)\cap U_i=\varnothing$.

Let $W=\big\{u\in R~|~N_G(u)\supseteq \cup_{i\in [p]}U_i\big\}$,
and for $i\in [p]$, let
$U'_i=\big\{u\in R~|~N_G(u)\cap \big(\cup_{j\in [p]}U_j\big)
=\cup_{j\in [p]\setminus \{i\}}U_j\big\}$.
Then, $e(G[U_i,U_i'])=0$ for $i\in [p]$,
and we have some claims.
The first one implies that
for each $u\in R$ and each $i\in [p]$, there exists at most one $i$ such that $N_G(u)\cap U_i=\varnothing$.

\begin{claim}\label{CLAIM2.1}
$R=W\cup (\bigcup_{i\in [p]}U_i')$.
\end{claim}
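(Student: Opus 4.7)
The plan is to argue by contradiction via a local edge-swap that ``normalizes'' a misbehaving vertex of $R$. Since $W$ and each $U_i'$ are by definition subsets of $R$, the nontrivial direction of the claim is $R \subseteq W \cup \bigcup_{i \in [p]} U_i'$. I would suppose, toward a contradiction, that some $u \in R$ lies in neither $W$ nor any $U_i'$. The already-established dichotomy---for every $k \in [p]$, either $U_k \subseteq N_G(u)$ or $U_k \cap N_G(u) = \varnothing$---lets me set $L = \{k \in [p] : U_k \cap N_G(u) = \varnothing\}$. Then $u \notin W$ forces $L \neq \varnothing$, while $u$ belonging to no $U_i'$ rules out $|L| = 1$, so $|L| \geq 2$.

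Next I would pick any $m \in L$ and any $v_0 \in U_m$, and form $G^\circ$ from $G$ by deleting all edges incident to $u$ and adding all edges from $u$ to $N_G(v_0)$. Since $m \in L$, $u \not\sim_G v_0$, and in $G^\circ$ the vertices $u$ and $v_0$ become non-adjacent twins. The core step is to verify that $G^\circ$ remains $\bigcup_{i \in [t]} H_i$-free. Because $G^\circ - u = G - u$, any hypothetical forbidden copy $H \subseteq G^\circ$ must contain $u$. By (\ref{eq4.1}) we have $|U_m| \geq n/p - \varphi$, which exceeds $|H| \leq \phi$ for $n$ large, so I can pick $v^* \in U_m \setminus V(H)$. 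Condition (b) of Definition \ref{def2.1AB} gives $N_G(v^*) = N_G(v_0)$; combined with $u \not\sim_G v^*$ (since $v^* \in U_m$ and $m \in L$), this yields $N_{G^\circ}(v^*) = N_G(v^*) = N_G(v_0) = N_{G^\circ}(u)$, so $v^*$ is also a twin of $u$ in $G^\circ$. Substituting $v^*$ for $u$ in $H$ then produces a copy of the same forbidden graph inside $G^\circ - u = G - u \subseteq G$, contradicting the $\bigcup_{i \in [t]} H_i$-freeness of $G$.

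Finally I would count edges: $e(G^\circ) - e(G) = d_G(v_0) - d_G(u)$. From (a), $d_G(v_0) \geq \sum_{k \neq m} |U_k| \geq (p-1)(n/p - \varphi)$, while $|L| \geq 2$ together with $|R| \leq \varphi$ yields $d_G(u) \leq (\varphi - 1) + (p-2)(n/p + \varphi)$, so $e(G^\circ) - e(G) \geq n/p - O(\varphi)$, which is strictly positive for sufficiently large $n$. Since $G^\circ$ is still forbidden-subgraph-free by the previous paragraph, this contradicts $G \in \mathrm{EX}(n, \bigcup_{i \in [t]} H_i)$ and completes the proof. The main obstacle is keeping $G^\circ$ forbidden-subgraph-free after the swap, and this hinges on being able to choose the spare twin vertex $v^*$; here the large size of $|U_m|$ guaranteed by (\ref{eq4.1}), combined with the symmetry condition (b), is exactly what saves the argument.
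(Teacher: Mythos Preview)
Your proof is correct and follows essentially the same strategy as the paper's: both argue by contradiction via an edge-swap that makes the bad vertex of $R$ mimic a vertex of some $U_k$, then exploit the large size of $U_k$ together with the symmetry condition (b) to find a substitute vertex outside any hypothetical forbidden copy. One inconsequential slip: since $H \cong \bigcup_{i\in[t]} H_i$, the correct bound is $|H|\le t\phi$ rather than $|H|\le\phi$, but $|U_m|\ge n/p-\varphi$ still dominates so the argument is unaffected.
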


\begin{proof}
Since each of $W$ and $U_i'$ is a subset of $R$,
it follows that $W\cup (\cup_{i\in [p]}U_i')\subseteq R$.
Now, we show $R\subseteq W\cup(\cup_{i\in [p]}U_i')$. Suppose, to the contrary,
that there exists $u_0\in R\setminus (W\cup (\cup_{i\in [p]}U_i'))$.
By the definitions of $W$ and $U_i'$, there exist two distinct $i_1,i_2\in [p]$
such that $N_G(u_0)\cap(U_{i_1}\cup U_{i_2})=\varnothing$.
Let $G'$ be the graph obtained from $G$ by deleting all edges incident to $u_0$
and adding all possible edges between $u_0$ and $\bigcup_{i\in [p-1]}U_i$.
In view of \eqref{eq4.1}, we have
$$d_G(u_0)\leq n-|U_{i_1}|-|U_{i_2}|<|\cup_{i\in [p-1]}U_i|,$$
and thus $e(G')>e(G)$.
By the choice of $G$, we know that
$G'$ is not $\cup_{i\in[t]}H_i$-free,
i.e., $G'$ contains a subgraph $H$ isomorphic to $\cup_{i\in [t]}H_i$.
From the construction of $G'$,
we have $u_0\in V(H)$.
Notice that $N_{H}(u_0)\subseteq \cup_{i\in [p-1]}U_i$ and $|V(H)|\leq t\phi<|U_p|$.
Select a vertex $u\in  U_p\setminus V(H)$.
By condition ($b$), $N_{H}(u_0)\subseteq \cup_{i\in [p-1]}U_i\subseteq N_{H}(u)$.
This implies that $G[(V(H)\setminus \{u_0\})\cup\{u\}]$ contains a copy of $H$,
which leads to a contradiction.
Therefore, $R=W\cup (\cup_{i\in [p]}U_i')$.
\end{proof}

%\hspace{10cm}

\begin{claim}\label{CLAIM2.2}
Let $|U_i\cup U_i'|=n_i$ for $i\in [p]$, and
assume $n_1\geq n_2\geq \dots \geq n_p$.
Then, $n_1-n_p\leq 1$.
\end{claim}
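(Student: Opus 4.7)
The plan is to establish Claim~\ref{CLAIM2.2} by a vertex-shifting (symmetrization) argument, exploiting the edge-extremality of $G$ together with the symmetric structure encoded in properties $(a)$--$(b)$ and the partition $R = W \cup (\bigcup_{i \in [p]} U_i')$ from Claim~\ref{CLAIM2.1}. Suppose, for contradiction, that $n_1 - n_p \geq 2$. Pick any $v \in U_1$ (nonempty because $n_1 \geq 2$) and fix a reference vertex $u_0 \in U_p$, and form $G'$ from $G$ by deleting every edge incident to $v$ and then re-attaching $v$ to exactly the vertices of $N_G(u_0) \setminus \{v\}$; intuitively, this relocates $v$ from class $1$ to class $p$.

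I would next compute the edge change. Since $U_1$ is independent, and the $\nabla$-structure from $(a)$--$(b)$ combined with the definitions of $W$ and each $U_j'$ forces
\[
N_G(v) = W \cup \Bigl(\bigcup_{j \neq 1}(U_j \cup U_j')\Bigr), \qquad N_G(u_0) = W \cup \Bigl(\bigcup_{j \neq p}(U_j \cup U_j')\Bigr),
\]
a direct count yields $d_G(v) = |W| + \sum_{j \neq 1} n_j$ and $d_{G'}(v) = |W| + \sum_{j \neq p} n_j - 1$. Since no other edges are affected,
\[
e(G') - e(G) \;=\; d_{G'}(v) - d_G(v) \;=\; n_1 - n_p - 1 \;\geq\; 1.
\]

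The crucial step is to verify that $G'$ remains $\bigcup_{i \in [t]} H_i$-free. Suppose some copy $H$ of a member of $\bigcup_{i \in [t]} H_i$ sits in $G'$. If $v \notin V(H)$, then $H \subseteq G' - v = G - v \subseteq G$, contradicting the $\bigcup_{i \in [t]} H_i$-freeness of $G$. If $v \in V(H)$, then since $|V(H)| \leq t\phi$ and $|U_p| \geq n/p - \varphi > t\phi$ by \eqref{eq4.1} for $n$ sufficiently large, I can select $u \in U_p \setminus V(H)$. By the vertex-symmetry from $(b)$, we have $N_G(u) = N_G(u_0) \supseteq N_{G'}(v)$, so substituting $u$ for $v$ in $H$ produces an isomorphic copy sitting entirely inside $G$, again a contradiction. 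Thus $G'$ is $\bigcup_{i \in [t]} H_i$-free with strictly more edges than $G$, contradicting $G \in \mathrm{EX}(n, \bigcup_{i \in [t]} H_i)$, and we conclude $n_1 - n_p \leq 1$.

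The main obstacle is the freeness check in the case $v \in V(H)$: the swap of $v$ for $u$ must preserve every edge of $H$ incident to $v$, which depends on (i) the symmetric characterization of $N_G(u)$ for $u \in U_p$ obtained from property $(b)$ together with the structure of $R$ from Claim~\ref{CLAIM2.1}, yielding the key inclusion $N_{G'}(v) \subseteq N_G(u)$; and (ii) the lower bound $|U_p| \geq n/p - \varphi$ from \eqref{eq4.1}, which guarantees a replacement $u \in U_p \setminus V(H)$ exists for $n$ sufficiently large. Once these two ingredients are assembled, the remainder is a short degree comparison.
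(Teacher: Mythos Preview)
Your proof is correct and follows essentially the same route as the paper's: both argue by contradiction, move a vertex of $U_1$ into class $p$ (the paper deletes only the edges to $U_p\cup U_p'$ and adds those to $(U_1\cup U_1')\setminus\{u_0\}$, while you delete all edges and re-attach to $N_G(u_0)\setminus\{v\}$ for $u_0\in U_p$, which yields the same modified graph), and then use the symmetry of vertices in $U_p$ to swap in a fresh $u\in U_p\setminus V(H)$ and pull the forbidden copy back into $G$. One cosmetic point: your justification ``nonempty because $n_1\geq 2$'' does not literally guarantee $U_1\neq\varnothing$ (since $n_1=|U_1\cup U_1'|$); the correct reason is $|U_1|\geq n/p-\varphi$ from \eqref{eq4.1}, which you already invoke later for $U_p$.
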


\begin{proof}
Suppose, for a contradiction, that $n_1 - n_p \geq 2$.
Choose a vertex $u_0 \in U_1$.
Let $G''$ be the graph obtained from $G$ by deleting all edges from $u_0$ to $U_p \cup U_p'$,
and adding all edges from $u_0$ to $(U_1 \cup U_1') \setminus \{u_0\}$.
Since $n_1 \geq n_p + 2$, it follows that $|(U_1 \cup U_1') \setminus \{u_0\}| > |U_p \cup U_p'|$, and hence $e(G'') > e(G)$.
This further implies that $G''$ contains a subgraph $H$ isomorphic to $\cup_{i\in [t]}H_i$.
By an argument similar to that in the proof of Claim \ref{CLAIM2.1},
we can deduce that $G$ contains a copy of $H$, a contradiction.
Therefore, $n_1 - n_p \leq 1$.
\end{proof}

\begin{claim}\label{CLAIM2.3}
$|W|=t-1$.
\end{claim}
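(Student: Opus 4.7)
The plan is to establish $|W| = t-1$ by proving the two inequalities $|W| \leq t - 1$ and $|W| \geq t - 1$ separately.

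For the upper bound, I would argue by contradiction. Suppose $|W| \geq t$ and pick distinct $w_1, \ldots, w_t \in W$. Since $\mathrm{ex}(n, H_i) = e(T_p(n)) + O(1)$, Theorem~\ref{THM1.1A} guarantees that each $\mathcal{M}(H_i)$ contains some star $S_{\Delta_i + 1}$, so $H_i$ admits a $(p+1)$-coloring whose first color class consists of a single vertex $v_i$. Map $v_i \mapsto w_i$ for each $i \in [t]$, and distribute the remaining $p$ color classes of each $H_i$ among the independent sets $U_1, \ldots, U_p$. The embedding succeeds because every $w_i$ is adjacent to all of $\cup_j U_j$ by definition of $W$, every pair $(U_a, U_b)$ with $a \neq b$ is complete bipartite in $G-R$, and \eqref{eq4.1} gives $|U_j| \geq n/p - \varphi \geq t\phi$ for large $n$, leaving enough room for disjoint images from all $t$ copies to coexist in each $U_j$. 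This yields $\cup_{i \in [t]} H_i \subseteq G$, contradicting the hypothesis that $G \in \mathrm{EX}(n, \cup_{i \in [t]} H_i)$.

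For the lower bound, I would invoke the candidate extremal graph $G^{\star} = K_{t-1} \nabla T_p(n-t+1)$ and show that $G^\star$ is $\cup_{i \in [t]} H_i$-free, so $e(G) \geq e(G^\star)$. Freeness follows from chromatic reasons: since $\chi(H_i) = p + 1 > p = \chi(T_p(n-t+1))$, every copy of $H_i$ in $G^\star$ must include at least one of the $t-1$ vertices of $K_{t-1}$, while $t$ vertex-disjoint copies would require $t$ such vertices. Next, writing $r = |R|$, $w = |W|$, and $n_i = |U_i|$, I would compute
\[
e(G) = e(G[R]) + r(n-r) - \sum_{i \in [p]} |U_i'|\, n_i + \sum_{i<j} n_i n_j,
\]
using the complete $p$-partite structure of $G - R$ together with the adjacency patterns of $W$ and each $U_i'$ from Claim~\ref{CLAIM2.1}. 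Applying Claim~\ref{CLAIM2.2} to get $n_i = (n-r)/p + O(1)$, and using the bounds $|R|, |U_i'| \leq \varphi = O(1)$, the comparison with $e(G^\star) = \binom{t-1}{2} + (t-1)(n-t+1) + e(T_p(n-t+1))$ simplifies, after cancellation, to
\[
e(G) - e(G^{\star}) = \frac{n}{p}(w - t + 1) + O(1).
\]
If $w \leq t - 2$, the right-hand side is $-\Omega(n)$ for large $n$, contradicting extremality of $G$.

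The main technical obstacle is the edge-count comparison. The key algebraic cancellation is $(r - t + 1)\bigl(1 - \tfrac{p-1}{p}\bigr) - \tfrac{r-w}{p} = \tfrac{w - t + 1}{p}$, where the $\tfrac{p-1}{p}$ contribution captures the Tur\'an-style gain inside the larger $p$-partite region when $r < t - 1$, offset by the loss from the missing $W$-to-$U_i$ joins and the penalty from non-edges between $U_i'$ and $U_i$. One must carefully track all $O(1)$ slack coming from the bounded size of $R$, the parity of $(n-r)/p$, and the interior contribution $e(G[R])$ to ensure the linear-in-$n$ term dominates.
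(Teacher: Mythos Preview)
Your proof is correct. The upper-bound half (direct embedding of each $H_i$ via the star in $\mathcal{M}(H_i)$, sending the star center to a vertex of $W$) is essentially the paper's argument, which is phrased more abstractly through the decomposition family: the paper observes $\gamma(\mathcal{M}(\cup_i H_i))=t$, picks a bipartite $M$ with $\gamma(M)=t$, and notes that $M\subseteq K_{t,\phi}\subseteq K_{|W|,|U_1|}$ forces $\cup_i H_i\subseteq G$ whenever $|W|\ge t$.

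Your lower-bound half takes a genuinely different route. The paper does not do a global edge count against $G^\star=K_{t-1}\nabla T_p(n-t+1)$. Instead, assuming $|W|\le t-2$, it performs a local modification: pick $u_0\in U_1$, delete all edges inside $G[R]$ and from $u_0$ to $W$, then join $u_0$ to $U_1\setminus\{u_0\}$. Since $|U_1|=\Theta(n)$ while the deleted edges number $O(1)$, the resulting $G'$ has strictly more edges, hence contains $\cup_i H_i$. But $G'$ is visibly a subgraph of the complete $(p+1)$-partite graph with parts $W\cup\{u_0\},\,(U_1\cup U_1')\setminus\{u_0\},\,U_2\cup U_2',\ldots,U_p\cup U_p'$, so some bipartite $M'\in\mathcal{M}(\cup_i H_i)$ has $\gamma(M')\le |W|+1\le t-1$, contradicting $\gamma(\mathcal{M}(\cup_i H_i))=t$. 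This sidesteps the algebraic bookkeeping you carry out and stays entirely within the decomposition-family machinery already in play. Your counting argument, by contrast, is more self-contained---it ties directly to the candidate extremal graph and does not invoke $\gamma$ at all---at the cost of the careful cancellation $(r-t+1)(1-\tfrac{p-1}{p})-\tfrac{r-w}{p}=\tfrac{w-t+1}{p}$ you flagged.
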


\begin{proof}
By Theorem \ref{THM1.1A}, $\mathcal{M}(H_i)$ contains a star, implying that $\gamma(\mathcal{M}(H_i)) = 1$ for $i\in [t]$.
Thus, $\gamma(\mathcal{M}(\cup_{i\in [t]}H_i))=t$, i.e.,
there exists a bipartite member $M \in \mathcal{M}(\bigcup_{i \in [t]} H_i)$
with $\gamma(M) = t$.

We first prove that $|W|\leq t - 1$.
Suppose $|W|\geq t$.
Then, $M \subseteq K_{t, \phi} \subseteq K_{|W|,|U_1|}$.
By the definition of $\mathcal{M}(\cup_{i \in [t]} H_i)$,
we have $\cup_{i \in [t]} H_i\subseteq K_{|W|,|U_1|,|U_2|,\dots,|U_p|}$,
and by the choice of $W$, we further know that $\cup_{i \in [t]} H_i\subseteq K_{|W|,|U_1|,|U_2|,\dots,|U_p|}\subseteq G$,
leading to a contradiction.
Therefore, $|W| \leq t- 1$.

Now, we show that $|W|=t-1$.
Suppose, to the contrary, that $|W| \leq t - 2$.
Choose an arbitrary vertex $u_0 \in U_1$.
Let $G'$ be the graph obtained from $G$ by deleting all edges of $G[R]$ and all edges from $u_0$ to $W$,
and then adding all edges between $u_0$ and $U_1 \setminus \{u_0\}$.
Recall that $|R| \leq \varphi$, and by \eqref{eq4.1}, we have $|U_1|=\Theta(n)$.
Therefore, $|U_1 \setminus \{u_0\}|> e(G[R])+|W|$, and hence $e(G') > e(G)$.
Since $G\in \mathrm{EX}(n,\cup_{i\in [t]}H_i)$,
we conclude that
$G'$ contains a copy of $\cup_{i\in [t]}H_i$.

Recall that $e(G[U_i])=e(G[U_i,U_i'])=0$ for $i\in [p]$.
By the construction of $G'$, we can see that
$W\cup \{u_0\}$, $(U_1\setminus \{u_0\})\cup U_1'$,
and $U_i\cup U_i'$ for $i\in [p]\setminus\{1\}$, are all independent sets in $G'$.
Since $|U_i\cup U_i'|=n_i$ for $i\in [p]$,
it follows that $G'$ is a subgraph of $K_{|W|+1,n_1-1,n_2,\dots,n_p}$.
Furthermore, since $G'$ contains a copy of $\cup_{i\in [t]}H_i$,
so does $K_{|W|+1,n_1-1,n_2,\dots,n_p}$.
By Definition \ref{definition-1.1},
$K_{|W|\!+\!1,n_1\!-\!1}$ contains some bipartite member $M'$ of $\mathcal{M}(\cup_{i\in [t]}H_i)$.
Then, $\gamma(M')\leq \gamma(K_{|W|+1,n_1-1})=|W|+1\leq t-1$.

On the other hand, recall that $\gamma(\mathcal{M}(\cup_{i\in [t]}H_i))=t$, and thus
$\gamma(M')\geq\gamma(\mathcal{M}(\cup_{i\in [t]}H_i))=t$,
which leads to a contradiction.
Therefore, $|W|=t-1$.
\end{proof}

Let $H'=G[W]$ and $H''=G-W$.
Then, $V(H'')=\cup_{i\in [p]}(U_i\cup U_i')$.
By Claims \ref{CLAIM2.2} and \ref{CLAIM2.3},
we have $|H''|=n-t+1$, and for each $i\in [p]$,
$|U_i\cup U_i'|\in\{\lfloor\frac{n-t+1}{p}\rfloor,\lceil\frac{n-t+1}{p}\rceil\}$.
Combining the definition of $U_i'$ with ($a$) and ($b$),
we can conclude that $H''$ is obtained from $T_p(n-t+1)$ by adding and deleting $O(1)$ edges within $G[\cup_{i\in [p]}U_i']$.
If $t=1$, then $G=H''$. Note that $G\in \mathrm{EX}(n,\cup_{i\in [t]}H_i)$.
By Definition \ref{definition-1.2},
$\mathcal{M}(\cup_{i\in [t]}H_i)$ is clearly matching-good.
Next, let $t\geq 2$ and $W\!=\!\{w_1,\ldots,w_{t-1}\}$.

\begin{claim}\label{CLAIM2.4}
$H''$ is $\{H_i~|~i\in [t]\}$-free.
\end{claim}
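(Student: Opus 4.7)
The plan is to argue by contradiction. Suppose $H''$ contains a copy $H_{j_0}^{\ast}$ of some $H_{j_0}$ with $j_0\in [t]$, and let $V^{\ast}=V(H_{j_0}^{\ast})\subseteq V(H'')=\bigcup_{i\in[p]}(U_i\cup U_i')$. The goal is to construct $t-1$ additional copies, one copy $H_{i}^{\ast}$ of $H_i$ for each $i\in [t]\setminus\{j_0\}$, that are pairwise vertex-disjoint and also disjoint from $V^{\ast}$; together with $H_{j_0}^{\ast}$ these would assemble into a copy of $\bigcup_{i\in[t]}H_i$ inside $G$, contradicting the fact that $G\in\mathrm{EX}(n,\bigcup_{i\in[t]}H_i)$ is $\bigcup_{i\in[t]}H_i$-free.

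The key structural input is that each $H_i$ admits a $(p+1)$-coloring in which one color class is a singleton. I would establish this as follows. Since $\chi(H_i)=p+1$ and $\mathrm{ex}(n,H_i)=e(T_p(n))+O(1)$, Theorem \ref{THM1.1A} implies $\mathcal{M}(H_i)$ contains a star $K_{1,\Delta_i}$. By Definition \ref{definition-1.1} there is a constant $s_i$ with $H_i\subseteq (K_{1,\Delta_i}\cup E_{s_i})\nabla T_{p-1}((p-1)s_i)$. Let $v_i^{\ast}\in V(H_i)$ be the preimage of the star's center. Removing $v_i^{\ast}$ embeds $H_i-v_i^{\ast}$ into $E_{\Delta_i+s_i}\nabla T_{p-1}((p-1)s_i)$, which is $p$-partite, so $\chi(H_i-v_i^{\ast})\leq p$. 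Fixing such a $p$-coloring of $H_i-v_i^{\ast}$ with classes $V_{i,1},\ldots,V_{i,p}$ (some possibly empty) yields the required $(p+1)$-coloring of $H_i$ with singleton class $\{v_i^{\ast}\}$ and remaining classes of total size at most $\phi-1$.

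With this in hand, the embeddings are built greedily. Fix a bijection $\pi:[t-1]\to [t]\setminus\{j_0\}$. For $s=1,2,\ldots,t-1$, I would embed $H_{\pi(s)}$ by sending $v_{\pi(s)}^{\ast}$ to $w_s\in W$, and for each $k\in[p]$ sending $V_{\pi(s),k}$ into a batch of previously unused vertices of $U_k\setminus V^{\ast}$. All the required edges are present: $w_s$ is joined to every vertex of $\bigcup_k U_k$ by the definition of $W$, and any two vertices in distinct $U_k$'s are adjacent because $G-R$ contains the complete $p$-partite graph with parts $U_1,\ldots,U_p$. The feasibility count is immediate: the total number of vertices demanded from $\bigcup_k U_k$ across all $t$ embeddings is at most $t\phi$, whereas $|U_k|\geq t\phi\varphi$ for each $k\in[p]$ by \eqref{eq4.1}, so fresh vertices are always available.

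The only genuinely delicate step, and the main obstacle I anticipate, is the structural reduction above extracting a color-critical vertex $v_i^{\ast}$ from the mere presence of a star in $\mathcal{M}(H_i)$; once this has been verified, the pigeonhole-plus-greedy-embedding argument described produces the forbidden copy of $\bigcup_{i\in[t]}H_i$ in $G$, completing the proof by contradiction.
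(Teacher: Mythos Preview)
Your proposal is correct and follows essentially the same approach as the paper. Both argue by contradiction, use that $\mathcal{M}(H_j)$ contains a star to deduce $H_j\subseteq K_1\nabla T_p(p\phi)$ (equivalently, that some vertex $v_j^\ast$ has $\chi(H_j-v_j^\ast)\le p$), and then embed the remaining $t-1$ graphs by sending each special vertex to a distinct $w_s\in W$ and the $p$ color classes into disjoint blocks of the $U_k$'s; your explicit extraction of $v_i^\ast$ is exactly what the paper's one-line appeal to ``the definition of decomposition family'' unpacks to.
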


\begin{proof}
Suppose, to the contrary,
that $H''$ contains a subgraph isomorphic to $H_{t}$.
Based on \eqref{eq4.1}, for each $i\in [p]$,
there exists a subset
$V_i'=\{u_{i,1},u_{i,2},\dots,u_{i,(t-1)\phi}\}\subseteq U_i\setminus V(H_t)$.
We can partition $V_i'$ into $\cup_{j\in [t-1]}V_{i,j}'$,
where $V_{i,j}'=\{u_{i,(j-1)\phi+1},u_{i,(j-1)\phi+2},\dots,u_{i,j\phi}\}$.
For each $j\in [t-1]$,
let $G_j$ be the subgraph of $G$ induced by
$(\cup_{i\in [p]}V_{i,j}')\cup \{w_j\}$.
Then, $G_j\cong K_1\nabla T_{p}(p\varphi)$, and the subgraphs
$G_1,G_2,\ldots,G_{t-1}$ and $H_t$ are vertex-disjoint.
For each $j\in [t-1]$, recall that $\mathcal{M}(H_j)$ contains a star,
which must be a subgraph of $K_{1,p\varphi}$.
By the definition of decomposition family,
we can conclude that $K_1\nabla T_{p}(p\varphi)$ (i.e., $G_j$) contains a copy of $H_j$.
Consequently, $G$ contains the disjoint union of $H_1,H_2,\ldots,H_t$,
which leads to a contradiction.
Thus, the claim follows.
\end{proof}

\begin{claim}\label{CLAIM2.5}
$K_{t-1}\nabla H''$ is $\cup_{i\in [t]}H_i$-free.
\end{claim}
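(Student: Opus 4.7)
The plan is to derive a contradiction via a straightforward pigeonhole argument on the vertex set $W$, using Claim \ref{CLAIM2.4} as the essential input. I would suppose, for contradiction, that $K_{t-1}\nabla H''$ contains a copy of $\cup_{i\in [t]}H_i$, realized as pairwise vertex-disjoint subgraphs $H_1^*, H_2^*, \ldots, H_t^*$ with $H_i^*\cong H_i$ for each $i\in [t]$.

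Since $V(K_{t-1}\nabla H'') = W\cup V(H'')$, for every $i\in[t]$ either $V(H_i^*)\cap W=\varnothing$, in which case $H_i^*\subseteq K_{t-1}\nabla H'' - W = H''$, or else $V(H_i^*)\cap W\neq\varnothing$. The first possibility is excluded by Claim \ref{CLAIM2.4}, which asserts that $H''$ is $\{H_j : j\in [t]\}$-free. Hence for every $i\in [t]$ we must have $V(H_i^*)\cap W\neq\varnothing$, meaning that each of the $t$ pairwise vertex-disjoint copies contributes at least one vertex to $W$. Combining this with Claim \ref{CLAIM2.3}, which gives $|W|=t-1$, I would obtain
\[
t \leq \sum_{i\in [t]} |V(H_i^*)\cap W| \leq |W| = t-1,
\]
which is the desired contradiction.

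I do not expect a serious obstacle in this step: once Claims \ref{CLAIM2.3} and \ref{CLAIM2.4} are established, the key structural feature is that $|W|$ is precisely one short of what would be needed to accommodate $t$ disjoint $H_i$-copies each forced to meet $W$. The only point worth verifying carefully is that, in $K_{t-1}\nabla H''$, deleting $W$ really leaves exactly $H''$ (not a supergraph), which is immediate from the definition of the join.
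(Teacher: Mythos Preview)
Your proposal is correct and follows essentially the same approach as the paper: the paper also supposes a copy of $\cup_{i\in[t]}H_i$ lies in $K_{t-1}\nabla H''$, invokes Claim \ref{CLAIM2.4} to force each $H_i$ to meet the $K_{t-1}$ part, and then reaches the contradiction $t\le t-1$ by counting. The only cosmetic difference is that the paper refers directly to the $t-1$ vertices of $K_{t-1}$ rather than citing Claim \ref{CLAIM2.3} for $|W|=t-1$, but this is the same argument.
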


\begin{proof}
Suppose, to the contrary, that $K_{t-1}\nabla H''$ contains $\cup_{i\in [t]}H_i$ as a subgraph.
By Claim \ref{CLAIM2.4},
$H''$ is $\{H_i~|~i\in [t]\}$-free.
Then, each $H_i$ must contain at least one vertex from $K_{t-1}$.
This implies that $\cup_{i\in [t]}H_i$ contains at least $t$ vertices from $K_{t-1}$,
which leads to a contradiction.
\end{proof}

Now, we complete the proof of Theorem \ref{thm1.9A}.

\begin{proof}[\textbf{Proof of Theorem \ref{thm1.9A}}]

Observe that $G$ is a subgraph of $K_{t-1}\nabla H''$.
Since $G\in \mathrm{EX}(n,\cup_{i\in [t]}H_i)$,
by Claim \ref{CLAIM2.5}, we conclude that $G\cong K_{t-1}\nabla H''$.

By Theorem \ref{THM1.1A},
for each $i\in [p]$, $\mathcal{M}(H_i)$ contains a star,
which implies that $\beta(\mathcal{M}(H_i))=\gamma(\mathcal{M}(H_i))=1$.
Thus,
$\beta(\mathcal{M}(\cup_{i\in [t]}H_i))=\gamma(\mathcal{M}(\cup_{i\in [t]}H_i))=t$.
In this case, by the definition of $\mathcal{B}(\cup_{i\in [t]}H_i)$,
we know that $\mathcal{B}(\cup_{i\in [t]}H_i)=\{K_{t}\}$, and hence, ${\rm EX}(t-1,\mathcal{B}(\cup_{i\in [t]}H_i))=\{K_{t-1}\}$.

Recall that $\mathcal{M}(\cup_{i\in [t]}H_i))$ contains a matching,
and $H''$ differs from $T_p(n-t+1)$ by $O(1)$ edges.
By Definition \ref{definition-1.2},
$\mathcal{M}(\cup_{i\in [t]}H_i))$ is clearly matching-good, thus completing the proof.
\end{proof}

\section{Proofs of Theorems \ref{TH1.6}, \ref{thm2.4}, and \ref{thm1.13C}}\label{section5}

In this section, we investigate three classes of graphs:
generalized color-critical graphs, odd-ballooning of trees or of complete bipartite graphs,
and edge blow-up of non-bipartite graphs or of specific bipartite graphs.
We prove that the decomposition families of these graphs are matching-good.

\begin{proof}[\textbf {Proof of Theorem \ref{TH1.6}}]
Let $\mathcal{H}$ be a $q$-color-critical graph family
with $p(\mathcal{H})=p\geq 2$.
Recall that $\beta(\mathcal{M}(\mathcal{H}))$ and $\gamma(\mathcal{M}(\mathcal{H}))$ represent
the minimum covering number and the minimum independent covering number of the graph family $\mathcal{M}(\mathcal{H})$, respectively.
Let $\beta(\mathcal{M}(\mathcal{H}))=\beta$ and $\gamma(\mathcal{M}(\mathcal{H}))=\gamma$.

By the definition of $\beta(\mathcal{M}(\mathcal{H}))$,
there exists a member $M\in \mathcal{M}(\mathcal{H})$ such that $\beta(M)=\beta$.
Thus, we can find a covering $S$ of $M$ such that $|S|=\beta$.
We now prove that $\beta\geq q$.
Suppose, for the sake of contradiction, that $\beta\leq q-1$.
Let $G^*$ be the graph obtained from $T_{p}(n)$ by embedding $M$ into one color class of $T_{p}(n)$.
Clearly, $\chi(G^*-S)=p$,
and by Definition \ref{definition-1.1}, $G^*$ must contain some member $H$ in $\mathcal{H}$.
Thus, $\chi(H-S)\leq \chi(G^*-S)= p$,
which contradicts condition (i) of Definition \ref{definition-1.3}.
Therefore, $\beta\geq q$.

By the definition of $\gamma(\mathcal{M}(\mathcal{H}))$,
there also exists a bipartite member $M'\in \mathcal{M}(\mathcal{H})$ such that $\gamma(M')=\gamma$.
Consequently, $\gamma=\gamma(M')\geq \beta(M')\geq \beta.$
By Lemma \ref{LEMMA2.1},
$M'$ contains a matching $M_{\beta}$.
On the other hand, by condition (ii) of Definition \ref{definition-1.3}, we can observe that
$\mathcal{M}(\mathcal{H})$ must contain a matching $M_{q'}$ for some $q'\leq q$.
Since $q\leq\beta$,
it follows that $M_{q'}\subseteq M_{\beta}\subseteq M'$.
By Definition \ref{definition-1.1},
the members in $\mathcal{M}(\mathcal{H})$ are all minimal.
Since both $M_{q'}$ and $M$ belong to $\mathcal{M}(\mathcal{H})$,
we conclude that $M_{q'}=M'$, which implies $\beta=q=q'$.

Since $M_q\in \mathcal{M}(\mathcal{H})$, we have $\gamma\leq \gamma(M_q)=q$.
This, together with $\gamma\geq \beta$ and $\beta=q$,
gives $\gamma=\beta=q$.
In this case, we know that $\mathcal{B}(\mathcal{H})=\{K_{q}\}$ and ${\rm EX}(q-1,\mathcal{B}(\mathcal{H}))=\{K_{q-1}\}$.
Moreover, by Theorem \ref{TH1.3}, $\mathrm{EX}(n,\mathcal{H})=\{H(n,p,q)\}$,
where $H(n,p,q)=K_{q-1}\nabla T_{p}(n-q+1)$.
Therefore, by Definition \ref{definition-1.2},
it is easy to verify that $\mathcal{M}(\mathcal{H})$ is matching-good.
\end{proof}

Recall that the odd-ballooning of $H$, denoted by $H^{odd}$, is a graph obtained from
$H$ by replacing each edge of $H$ with an odd cycle of length at least five,
where the new vertices of different odd cycles are distinct.
For any bipartite graph $H$, it is clear that $p(H^{odd})=2$.

\begin{proof}[\textbf{Proof of Theorem \ref{thm2.4}}]
(i) By Lemma 1 of \cite{Zhu2023},
$\mathcal{M}(T^{odd})$ contains a matching for any tree $T$. Additionally,
by Theorem \ref{thm2.2},
there exists a graph $G^\star\in \mathcal{H}(n,2,\gamma(\mathcal{M}(T^{odd})),\mathcal{B}(T^{odd}),K_{k-1,k-1})$,
which is also an extremal graph with respect to $\mathrm{ex}(n,T^{odd})$.
Let $\gamma(\mathcal{M}(T^{odd}))=\gamma$.
Recall the definition of $\mathcal{H}(n,2,\gamma,\mathcal{B}(T^{odd}),K_{k-1,k-1})$.
The graph $G^\star$ can be obtained from $Q\nabla T_{2}(n-\gamma+1)$
for some $Q\in {\rm EX}(\gamma-1,\mathcal{B}(T^{odd}))$
by embedding a copy of $K_{k-1,k-1}$ into one color class of $T_{2}(n-\gamma+1)$.
Therefore, by Definition \ref{definition-1.2},
$\mathcal{M}(T^{odd})$ is matching-good.

(ii)
From the proof of Lemma 1 in \cite{Peng-2024},
we can see that $\gamma(\mathcal{M}(K_{s,t}^{odd}))=\beta(\mathcal{M}(K_{s,t}^{odd}))=s$ and $\mathcal{M}(K_{s,t}^{odd})$ contains a matching.
Since $\gamma(\mathcal{M}(\mathcal{H}))=\beta(\mathcal{M}(\mathcal{H}))=s$,
we know that $\mathcal{B}(K_{s,t}^{odd})=\{K_{s}\}$ and ${\rm EX}(s-1,\mathcal{B}(K_{s,t}^{odd}))=\{K_{s-1}\}$.
To prove that $\mathcal{M}(K_{s,t}^{odd})$ is matching-good,
by Definition \ref{definition-1.2},
it suffices to show that there exists a graph $G^\star\in {\rm EX}(n,K_{s,t}^{odd})$ such that
$G=K_{s-1}\nabla H_2$,
where $H_2$ differs from $T_{2}(n-s+1)$ by $O(1)$ edges.

Set $G^\star=H'(n,2,2)$ if $(s,t)=(2,2)$,
and $G^\star=G_{s,t}$ otherwise.
Then, by Theorem \ref{thm2.3}, we know that $G^\star\in {\rm EX}(n,K_{s,t}^{odd})$.
Recall that $H'(n,2,2)$ (resp. $G_{s,t}$)
is obtained from $K_{s-1}\nabla T_2(n-s+1) $ by embedding a copy of $K_2$
(resp. $H_{t-1,t-1}$) into one color class of $T_{2}(n-s+1)$.
Clearly, $\mathcal{M}(K_{s,t}^{odd})$ is matching-good.
\end{proof}

Given a graph $H$ and an integer $p\geq 2$, recall that the edge blow-up of $H$, denoted by $H^{p+1}$,
is the graph obtained from replacing each edge in $H$ by a $K_{p+1}$ where the new vertices of $K_{p+1}$ are all distinct.

\begin{proof}[\textbf{Proof of Theorem \ref{thm1.13C}}]
Fix a graph $H$ with $p\geq \chi(H)+1$.
For short, we write $\gamma(\mathcal{M}(H^{p+1}))=\gamma$,  $\beta(\mathcal{M}(H^{p+1}))=\beta$,
and $\mathcal{B}(\mathcal{M}(H^{p+1}))=\mathcal{B}$, respectively.
Clearly, $p(H^{p+1})=\chi(H^{p+1})-1=p\geq 2$.
From Lemma 12 of \cite{L2013} we know that the family
$\mathcal{M}(H^{p+1})$ contains a matching.
To prove that $\mathcal{M}(H^{p+1})$ is matching-good,
by Definition \ref{definition-1.2},
it suffices to show that there exists a graph $G^\star\in {\rm EX}(n,H^{p+1})$
such that $G^{\star}=Q\nabla H_2$,
where $Q\in {\rm EX}(\gamma-1,\mathcal{B})$ and $H_2$ differs from $T_{p}(n-\gamma+1)$ by $O(1)$ edges.

(i) and (ii):
Let $H$ be either a non-bipartite graph, or a bipartite graph with $\gamma<\gamma(H)$.
Theorem \ref{thm1.11C} implies that $G^\star\in \mathcal{H}(n,p,\gamma,\mathcal{B})$.
By the definition of $\mathcal{H}(n,p,\gamma,\mathcal{B})$,
it follows that $G^\star\cong Q\nabla T_{p}(n-\gamma+1)$
for some $Q\in {\rm EX}(\gamma-1,\mathcal{B})$.
Thus, $\mathcal{M}(H^{p+1})$ is matching-good.

(iii)
Let $H$ be a tree, denoted by $T$, with partite classes $A$ and $B$ satisfying $|A|\leq |B|$.
It is clear that $\beta=\beta(T)$ and $\gamma(T)=|A|$.
By the definition of $\gamma$, it follows that $\gamma\leq\gamma(T)$.
Combining these with \eqref{align-001} gives that
\begin{align}\label{align-017}
\beta(T)=\beta \leq \gamma\leq \gamma(T)=|A|.
\end{align}

Suppose first that $\delta_{T}(A)=1$ and $\beta(T)= |A|$.
It follows from \eqref{align-017} that $\gamma=\beta=|A|$,
and hence ${\rm EX}(\gamma-1,\mathcal{B})=\{K_{\gamma-1}\}$.
By part (i) of Theorem \ref{thm1.12C},
we know that $G^\star=H(n,p,|A|)$.
By the definition of $H(n,p,|A|)$ and $|A|=\gamma$,
we see that $G^\star\cong K_{\gamma-1}\nabla T_{p}(n-\gamma+1)$.
Thus, $\mathcal{M}(T^{p+1})$ is matching-good.

Suppose next that $\delta_{T}(A)=1$ and $\beta(T)<|A|$.
By part (ii) of Theorem \ref{thm1.12C}, 
we  see that $G^\star\in \mathcal{H}(n,p,\gamma,\mathcal{B})$.
By the definition of $\mathcal{H}(n,p,\gamma,\mathcal{B})$,
it follows that $G^\star\cong Q\nabla T_{p}(n-\gamma+1)$
for some $Q\in {\rm EX}(\gamma-1,\mathcal{B})$.
Thus,
$\mathcal{M}(T^{p+1})$ is matching-good.

Finally, suppose that $\delta_{T}(A)\geq 2$.
By Lemma 2.7 of \cite{Wang2021},
we know that $\beta(T)=|A|$.
It follows from \eqref{align-017} that $\gamma=\beta=|A|$,
and hence ${\rm EX}(\gamma-1,\mathcal{B})=\{K_{\gamma-1}\}$.
By part (iii) of Theorem \ref{thm1.12C},
$G^{\star}$ can be obtained from $K_{\gamma-1}\nabla T_{p}(n-\gamma+1)$ by adding $\alpha_T$ suitable edges within $T_p(n-\gamma+1)$,
where $\alpha_T$ is a constant depending on $T$.
Thus, $\mathcal{M}(T^{p+1})$ is matching-good.

(iv)
Let $H$ be an even cycle $C_{2k}$ with $k\geq 2$.
Clearly, $\beta=\beta(C_{2k})=k$ and $\gamma(C_{2k})=k$.
By the definition of $\gamma$, it follows that $\gamma\leq \gamma(C_{2k})$.
Combining these observation with \eqref{align-001}, we obtain
\begin{align*}
k=\beta(C_{2k})=\beta \leq \gamma\leq \gamma(C_{2k})=k,
\end{align*}
which yields that  $\gamma=\beta=k$.
Hence, ${\rm EX}(\gamma-1,\mathcal{B})=\{K_{\gamma-1}\}$.
By Theorem 3 of \cite{L2013}, the extremal graph $G^\star$ is obtained from $K_{\gamma-1}\nabla T_p(n-\gamma+1)$ by adding one extra edge to one of the color classes of $T_{p}(n-\gamma+1)$.
Thus, $\mathcal{M}(H^{p+1})$ is matching-good.
\end{proof}

\section{Concluding remarks}

By Lemma \ref{LEMMA2.4},
if $\mathcal{M}(\mathcal{H})$ contains a linear forest,
then there exists $\varphi=\varphi(\mathcal{H})$ such that
$\mathcal{D}(n,p,\varphi)$ contains a graph $G\in \mathrm{EX}(n,\mathcal{H})$
for sufficiently large $n$.
It is natural to ask whether the spectral analogue still holds, namely:

\begin{prob}\label{prob6.1}
Let $\mathcal{\mathcal{H}}$ be a finite graph family
such that $p(\mathcal{H})=p\geq 2$ and $\mathcal{M}(\mathcal{H})$ contains a linear forest.
For sufficiently large $n$,
does there exist $\varphi=\varphi(\mathcal{H})$ such that
$\mathcal{D}(n,p,\varphi)$ contains a graph $G\in \mathrm{SPEX}(n,\mathcal{H})$?
If so, one can further ask whether $\mathcal{H}$ is spectral-consistent.
\end{prob}

Clearly, a matching is a special case of linear forest;
thus Lemmas \ref{LEMMA2.2} and \ref{LEMMA2.3} guarantee that
the conclusion of Problem \ref{prob6.1} holds
when $\mathcal{M}(\mathcal{H})$ contains a matching.
Theorem \ref{thm1.1} adds that
if $\mathcal{M}(\mathcal{H})$ is matching-good,
then $\mathcal{H}$ is spectral-consistent.
While this covers many non-bipartite graphs,
there remain cases whose decomposition family contains a linear forest
but no matching.
One notable example is the icosahedron $I^{12}$:
Its edge-extremal graph is already known (see \cite[Theorem 2.4]{Simonovits1974});
however, the corresponding spectral-extremal problem remains open.

Theorems \ref{TH1.3} and \ref{TH1.6} assert that
for any single $q$-color-critical graph $H$ with $\chi(H)=p+1\geq 3$ and sufficiently large $n$,
we have $\mathrm{EX}(n,H)=\mathrm{SPEX}(n,H)=\{H(n,p,q)\}$.
Recall that $H(n,p,q)=K_{q-1}\nabla T_{p}(n-q+1)$.
Then removing all vertices of the clique results in a $p$-partite graph.
This observation, rooted in a chromatic condition,
was further explored by Simonovits (see Definition 1.5 and Theorem 2.2 of \cite{Simonovits1974}).
Moreover, he proved that $e(G)>e(H(n,2,6))-\frac{n}{2}-C$ for a suitable constant $C$,
then one can remove 5 vertices so that the remaining graph is 2-partite.
By Claim \ref{claim-A3.1},
every edge of $H(n,p,q)$ contributes positively $\Theta({1}/{n})$ to its spectral radius.
Building on these observations, we close the paper with the following conjecture.

\begin{conj} %\label{thm-Y}
Let $q\geq 2$, $n$ be sufficiently large,
and $H$ be a single $q$-color-critical graph with $\chi(H)=p+1\geq 3$.
If $G$ is an $n$-vertex $H$-free graph with
 $\rho(G)\geq  \rho(H(n,p,q))-\varepsilon$
 for some constant $\varepsilon>0$,
then one can remove $q-1$ vertices from $G$
to obtain a $p$-partite graph.
\end{conj}


\begin{thebibliography}{99}
\setlength{\itemsep}{0pt}

%\bibitem {AHS1972}
%H.L. Abbott, D. Hanson, N. Sauer, Intersection theorems for systems of sets, \emph{J. Combin. Theory Ser. A} \textbf{12} (1972) 381--389.

\bibitem {Chen2025}
G.T. Chen, X.Y. Lei, S.C. Li,
The exact Tur\'{a}n number of disjoint graphs-A generalization of Simonovits' theorem, and beyond,
\emph{European J. Combin.} \textbf{130} (2025) 104226.


\bibitem {Chi2023}
C. Chi, L.-T. Yuan,
The Tur\'{a}n number for the edge blow-up of trees: the missing case,
\emph{Discrete Math.} \textbf{346} (2023), no. 6, Paper No. 113370, 8 pp.


\bibitem {Cioaba2}
S. Cioab\u{a}, D.N. Desai, M. Tait, The spectral radius of graphs with no odd wheels,
\emph{European J. Combin.} \textbf{99} (2022), Paper No. 103420, 19 pp.


\bibitem {CFTZ2020}
S. Cioab\u{a}, L.H. Feng, M. Tait, X.-D. Zhang,
The maximum spectral radius of graphs without friendship subgraphs,
\emph{Electron. J. Combin.} \textbf{27} (2020), no. 4, Paper No. 4.22, 19 pp.





\bibitem {DKL2022}
D.N. Desai, L.Y. Kang, Y.T. Li, Z.Y. Ni, M. Tait, J. Wang, Spectral extremal graphs for intersecting cliques,
\emph{Linear Algebra Appl.} \textbf{644} (2022) 234--258.

%\bibitem {Erdos-1967}
%P. Erd\H{o}s,  Some recent results on extremal problems in graph theory (Results),
%In: Theory of Graphs (International Symposium Rome, 1966), Gordon and
%Breach, New York, Dunod, Paris, 1966, pp. 117--123.

%\bibitem {Erdos-1968}
%P. Erd\H{o}s, On some new inequalities concerning extremal properties of
%graphs, In: Theory of Graphs (Proceedings of the Colloquium, Tihany, 1966),
%Academic Press, New York, 1968, pp. 77--81.

%\bibitem {Simonovits-1968}
%M. Simonovits, A method for solving extremal problems in graph theory,
%stability problems, In: Theory of Graphs (Proceedings of the Colloquium,
%Tihany, 1966), Academic Press, New York, 1968, pp. 279--319.

%\bibitem {E1962}
%P. Erd\H{o}s, \"{U}ber ein Extremalproblem in der Graphentheorie (German),
%\emph{Arch. Math. (Basel)} \textbf{13} (1962) 122--127.

\bibitem {EFGG1995}
P. Erd\H{o}s, Z. F\"{u}redi, R.J. Gould, D.S. Gunderson, Extremal graphs for intersecting triangles,
\emph{J. Combin. Theory Ser. B} \textbf{64} (1995), no. 1, 89--100.

\bibitem {EG1959}
P. Erd\H{o}s, T. Gallai, On maximal paths and circuits of graphs, \emph{Acta Math. Acad. Sci. Hungar.}
\textbf{10} (1959) 337--356.


\bibitem {Erdos-1966} P. Erd\H{o}s, M. Simonovits, A limit theorem in graph theory,
\emph{Studia Sci. Math. Hungar.} \textbf{1} (1966) 51--57.

\bibitem {ES1946}
P. Erd\H{o}s, A.H. Stone,
On the structure of linear graphs,
\emph{Bull. Amer. Math.}
\textbf{52} (1946) 1087--1091.


\bibitem {Fang-2025}
L.F. Fang, M. Tait, M.Q. Zhai,
Decomposition family and spectral extremal problems on non-bipartite graphs,
\emph{Discrete Math.} \textbf{348} (2025), no. 10, Paper No. 114527, 16 pp.

\bibitem {Fang-2024}
L.F. Fang, M.Q. Zhai, H.Q. Lin,
Spectral extremal problem on $t$ copies of $\ell$-cycles,
\emph{Electron. J. Combin.} \textbf{31} (2024), no. 4, Paper No. 4.17, 30 pp.

\bibitem {Fang2026}
L.F. Fang, H.Q. Lin,
Spectral extremal results on edge blow-up of graphs,
\emph{Discrete Math.} \textbf{349} (2026), no. 2, Paper No. 114835, 18 pp.


\bibitem {FS1975}
R.J. Faudree, R.H. Schelp, Path Ramsey numbers in multicolorings,
\emph{J. Combin. Theory Ser. B} \textbf{19} (1975), no. 2, 150--160.

\bibitem {Fiedler2010}
M. Fiedler, V. Nikiforov,
Spectral radius and Hamiltonicity of graphs,
\emph{Linear Algebra Appl.} \textbf{432} (2010), no. 99, 2170--2173.

\bibitem {FG2015}
Z. F\"{u}redi, D.S. Gunderson, Extremal numbers for odd cycles,
\emph{Combin. Probab. Comput.} \textbf{24} (2015), no. 4, 641--645.

\bibitem {FS2013}
Z. F\"{u}redi, M. Simonovits,
The history of degenerate (bipartite) extremal graph problems, Erd\"{o}s centennial, 169--264,
Bolyai Soc. Math. Stud., 25, J\'{a}nos Bolyai Math. Soc., Budapest, 2013.

\bibitem {Ge2020}
J. Ge, B. Ning,
Spectral radius and Hamiltonian properties of graphs II,
\emph{Linear Multilinear Algebra} \textbf{68} (2020), no. 11, 2298--2315.

\bibitem {Guiduli-1996}
B.D. Guiduli, Spectral extrema for graphs, Ph.D. Thesis, 105 pages,
University of Chicago, December 1996.


\bibitem {Hou-2024}
J.F. Hou, H. Li, Q.H. Zeng,
Extremal graphs for the suspension of edge-critical graphs,
\emph{Electron. J. Combin.} \textbf{31} (2024), no. 4, Paper No. 4.55, 15 pp.


\bibitem{Konig-1931}
D. K\"{o}nig, Graphs and matrices,
\emph{Mat. Fiz. Lapok.} \textbf{38} (1931) 41--53.


\bibitem {Lei-2024}
X.Y. Lei, S.C. Li,
Spectral extremal problem on disjoint color-critical graphs,
\emph{Electron. J. Combin.} \textbf{31} (2024), no. 1, Paper No. 1.25, 19 pp.


\bibitem{Li2023}
B.L. Li, B. Ning,
Stability of Woodall's theorem and spectral conditions for large cycles,
 \emph{Electron. J. Combin.} \textbf{30} (2023), no. 1, Paper No. 1.39, 20 pp.


\bibitem {Li}
Y.T. Li, Y.J. Peng, The spectral radius of graphs with no intersecting odd cycles,
\emph{Discrete Math.} \textbf{345} (2022), no. 8, Paper No. 112907, 16 pp.

\bibitem {Lin2022}
H.Q. Lin, M.Q. Zhai, Y.H. Zhao, Spectral radius, edge-disjoint cycles and cycles of the
 same length, \emph{Electron. J. Combin.} \textbf{29} (2022), no. 2, Paper No. 2.1, 26 pp.

\bibitem {L2013}
H. Liu, Extremal graphs for blow-ups of cycles and trees, \emph{Electron. J. Combin.} \textbf{20} (2013), no. 1, Paper 65, 16 pp.

\bibitem {Liu2023+}
L.L. Liu, B. Ning,
Spectral Tur\'{a}n-type problems on sparse spanning graphs,
\emph{Discrete Math.} \textbf{349} (2026), no. 7, Paper No. 115016.

\bibitem {Ma2025}
J. Ma, L.-T. Yuan,
Supersaturation beyond color-critical graphs,
\emph{Combinatorica} \textbf{45} (2025), no. 2, Paper No. 18, 40 pp.

\bibitem {M1968}
J.W. Moon, On independent complete subgraphs in a graph,
\emph{Can. J. Math.} \textbf{20} (1968) 95--102.

\bibitem {NWK2023}
Z.Y. Ni, J. Wang, L.Y. Kang, Spectral extremal graphs for disjoint cliques,
\emph{Electron. J. Combin.} \textbf{30} (2023), no. 1, Paper No. 1.20, 16 pp.

\bibitem {NWK2023+}
Z.Y. Ni, J. Wang, L.Y. Kang,
Extremal problems for disjoint graphs,
arXiv:2308.07608.

\bibitem {Nikiforov5}
V. Nikiforov, Bounds on graph eigenvalues II,
\emph{Linear Algebra Appl.} \textbf{427} (2007), nos. 2--3, 183--189.

\bibitem {Nikiforov10}
V. Nikiforov, Spectral saturation: inverting the spectral Tur\'{a}n theorem,
\emph{Electron. J. Combin.} \textbf{16} (2009), no. 1, Research Paper 33, 9 pp.


\bibitem {Peng-2024}
X. Peng, M.J. Xia,
Tur\'{a}n number of the odd-ballooning of complete bipartite graphs,
\emph{J. Graph Theory} \textbf{107} (2024), no. 1, 181--199.

\bibitem {Simonovits1966}
M. Simonovits, A method for solving extremal problems in graph theory, stability problems,
in: Theory of Graphs (Proc. Colloq., Tihany, 1966), Academic Press, New York,
1968, pp. 279--319.

\bibitem {Simonovits1974}
M. Simonovits, Extremal graph problems with symmetrical extremal graphs, Additional chromatic conditions,
\emph{Discrete Math.} \textbf{7} (1974) 349--376.


\bibitem {Simonovits1999}
M. Simonovits, How to solve a Tur\'{a}n type extremal graph problem?
\emph{DIMACS Series in Discrete Mathematics and Theoretical Computer Science}, 49, Amer. Math. Soc.,
Providence, RI, 1999.

\bibitem {Wang2021}
A.Y. Wang, X.M. Hou, B.Y. Liu, Y. Ma,
The Tur\'{a}n number for the edge blow-up of trees,
\emph{Discrete Math.} \textbf{344} (2021) 112627.

\bibitem {WANG-2023}
J. Wang, L.Y. Kang, Y.S. Xue, On a conjecture of spectral extremal problems,
\emph{J. Combin. Theory Ser. B} \textbf{159} (2023) 20--41.

\bibitem {WANG-2024}
J. Wang, Z.Y. Ni, L.Y. Kang, Y.-Z. Fan,
Spectral extremal graphs for edge blow-up of star forests,
\emph{Discrete Math.} \textbf{347} (2024), no. 10, Paper No. 114141, 14 pp.


\bibitem {Yan-2021}
N. Yan,
The Tur\'{a}n number of graphs with given decomposition family,
\emph{Acta Scientiarum Naturalium Universitatis Nankaiensis}
\textbf{54} (2021), no. 4, 34--43.


\bibitem {Y2022}
L.-T. Yuan, Extremal graphs for edge blow-up of graphs,
\emph{J. Combin. Theory Ser. B} \textbf{152} (2022) 379--398.


\bibitem {ZL2023}
M.Q. Zhai, H.Q. Lin, A strengthening of the spectral chromatic critical edge theorem: books and theta graphs,
\emph{J. Graph Theory} \textbf{102} (2023), no. 3, 502--520.

\bibitem {Zhai2022}
M.Q. Zhai, R.F. Liu, J. Xue,
A unique characterization of spectral extrema for friendship graphs,
\emph{Electron. J. Combin.} \textbf{29} (2022), no. 3, Paper No. 3.32, 17 pp.


%\bibitem {Zhang2024+}
%W.Q. Zhang,
%Walks, infinite series and spectral radius of graphs,
%arXiv:2406.07821v3.

\bibitem {Zhang2025+}
W.Q. Zhang,
Spectral skeletons and applications,
\emph{Discrete Math.} \textbf{349} (2026), no. 8, Paper No. 115110.


\bibitem {Zhu2020}
H. Zhu, L.Y. Kang, E.F. Shan,
Extremal graphs for odd-ballooning of paths and cycles,
\emph{Graphs Combin.} \textbf{36} (2020), no. 3, 755--766.

\bibitem {Zhu2023}
X.T. Zhu, Y.J. Chen,
Tur\'{a}n number for odd-ballooning of trees,
\emph{J. Graph Theory} \textbf{104} (2023), no. 2, 261--274.

\end{thebibliography}
\end{document}